\theoremstyle{plain}
\newcommand{\xRightarrow}[2][]{\ext@arrow 0359\Rightarrowfill@{#1}{#2}}
\newtheorem{proposition}{Proposition}[section]
\newtheorem{theorem}[proposition]{Theorem}
\newtheorem{lemma}[proposition]{Lemma}
\newtheorem{corollary}[proposition]{Corollary}
\newtheorem{rem}[proposition]{Remark}
\newtheorem{example}[proposition]{Example} 
\newtheorem*{eg*}{Example}
\newtheorem*{cor*}{Corollary}
\newtheorem*{claim*}{Claim}
\DeclareMathOperator{\Ima}{Im}
\DeclareMathOperator{\rank}{rank}
\DeclareMathOperator{\ev}{ev}
\DeclareMathOperator{\disc}{disc}
\newcommand{\isom}{\cong} 				
\newcommand{\rarr}{\rightarrow}			
\newcommand{\Rarr}{\Rightarrow}			
\newcommand{\Lrarr}{\Longrightarrow}
\newcommand{\wt}{\widetilde}
\newcommand{\wh}{\widehat}
\newcommand{\hrarr}{\hookrightarrow}
\newcommand{\Grr}{\varrho}
\renewcommand{\AA}{\mathbb{A}}						
\newcommand{\CC}{\mathbb{C}}
\newcommand{\QQ}{\mathbb{Q}}
\newcommand{\ZZ}{\mathbb{Z}}
\title{Spaces of polynomials related to multiplier maps}
\author{Zhaoning Yang}
\date{\today}							
\address{Department of Mathematics, Johns Hopkins University, Baltimore, MD, 21218}
\email{zyang32@math.jhu.edu}
\keywords{complex polynomial in one variable, multipliers.}
\begin{document}

\begin{abstract}
Let $f(x)$ be a complex polynomial of degree $n$. We attach to $f$ a $\CC$-vector space $W(f)$ that consists of complex polynomials $p(x)$ of degree at most $n - 2$ such that $f(x)$ divides $f''(x)p(x) - f'(x) p'(x)$. $W(f)$ originally appears in Yuri Zarhin's solution towards a problem of dynamics in one complex variable posed by Yu.~S.~Ilyashenko. In this paper, we show that $W(f)$ is nonvanishing if and only if $q(x)^2$ divides $f(x)$ for some quadratic polynomial $q(x)$. Then we prove $W(f)$ has dimension $(n-1) - (n_1 + n_2 + 2N_3)$ under certain conditions, where $n_i$ is the number of distinct roots of $f$ with multiplicity $i$ and $N_3$ is the number of distinct roots of $f$ with multiplicity at least three. 
\end{abstract}

\maketitle




\section{Definitions, notation, and statements} \label{sec: intro}

We write $\CC$ for the field of complex numbers, $\CC[x]$ for the ring of one variable polynomials with complex coefficients, and $P_s \subseteq \CC[x]$ for the subspace of polynomials of degree at most $s$. All vector spaces we consider are over $\CC$ unless otherwise stated. Throughout the paper $f(x) \in \CC[x]$ is a polynomial of degree $n \geq 4$. Let $R_i(f)$ be the set of distinct roots of $f(x)$ with multiplicity $i \geq 1$. Suppose that $n_i$ is the cardinality of $R_i(f)$ and $N_3$  is the number of distinct roots of $f$ with multiplicity at least three. 

We are interested in the space $W(f) \subseteq P_{n-2}$,  for which every $p(x) \in W(f)$ satisfies the condition that $p(x)$ divides $f''(x) p(x) - f'(x) p'(x)$. As a subspace of $P_{n-2}$, we wish to compute the dimension of $W(f)$ for various $f(x)$. The following assertions are main results of this paper.  

\begin{theorem} \label{T: mainthm}  \upshape Let $r  = r_f := n -2- (n_2 + 2N_3)$ and $\mu = \mu_f := r + 1 - n_1$. 
\begin{enumerate}[label=\textnormal{(\roman*)}]
	\item $\mu \leq \dim [ W(f) ] \leq r$ \label{item: bdsdim}
	\item If $0 \leq n_1 \leq 3$ or $r \geq 2n_1 -2$, then $\dim[ W(f) ]  = \mu$. \label{item: equdim}
 	\item If $n_1 = r = 4$, and $f(x)$ has \underline{at least two distinct multiple roots}, then $\dim[ W(f) ] = 2 > \mu =1$. \label{item: noneg}
\end{enumerate}
\end{theorem}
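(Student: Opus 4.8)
\emph{Proof plan.} The idea is to strip off the contributions of the multiple roots, reducing $\dim W(f)$ to the rank of an explicit linear map on a $5$-dimensional space, and then to show that rank equals $3$.

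Step one is the reduction. For $p\in W(f)$, the conditions forced at the multiple roots---read off from the local expansion of $f''p-f'p'$ there, as in the proof of \ref{item: bdsdim}---say precisely that $p$ vanishes at every double root and to order at least two at every root of multiplicity $\ge 3$; equivalently $g\mid p$, where $g:=\prod_{a\in R_2(f)}(x-a)\cdot\prod_{a\in R_{\ge 3}(f)}(x-a)^2$. Write $p=gh$; then $\deg h\le(n-2)-\deg g=r=4$. The same local picture gives $g\mid f'$, so $\psi:=f'/g$ is a polynomial of degree $(n-1)-\deg g=r+1=5$, and---checking the few multiplicity patterns allowed by $n_1=r=4$---one has $f/\gcd(f,g^{2})=F_1:=\prod_{a\in R_1(f)}(x-a)$, a separable polynomial of degree $n_1=4$. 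Since $f'=g\psi$ and $f''=(g\psi)'$, the bilinear identity $(g\psi)(gh)'-(g\psi)'(gh)=g^{2}\bigl(\psi h'-\psi'h\bigr)$ turns $f\mid f''p-f'p'$ into $f\mid g^{2}\bigl(\psi h'-\psi'h\bigr)$, hence into $F_1\mid\psi h'-\psi'h$. Consequently
\[
\dim W(f)=\dim\bigl\{\,h\in P_4:\ F_1\mid\psi h'-\psi'h\,\bigr\}=5-\operatorname{rank}T,
\]
where $T\colon P_4\to\CC[x]/(F_1)$ is $h\mapsto(\psi h'-\psi'h)\bmod F_1$. By \ref{item: bdsdim} we already have $1=\mu\le\dim W(f)\le r=4$, so the task is exactly to prove $\operatorname{rank}T=3$.

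Step two exposes the relevant structure. With $\widehat f:=f/g$ one has $\widehat f=F_1\,m$, where $m:=f/(gF_1)$ has degree $n_2+\sum_{a\in R_{\ge 3}(f)}(e_a-2)=2$ (here $e_a$ is the multiplicity of $a$); a one-line computation gives $\psi\equiv\widehat f'\equiv mF_1'\pmod{F_1}$. Writing $R_1(f)=\{a_1,a_2,a_3,a_4\}$ and using separability of $F_1$, the vanishing $T(h)=0$ is equivalent to the simultaneous vanishing on $h$ of the four functionals $\ell_j(h):=h'(a_j)-\lambda_j h(a_j)$ with $\lambda_j=\dfrac{m'(a_j)}{m(a_j)}+\dfrac{F_1''(a_j)}{F_1'(a_j)}$. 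Thus $\dim W(f)=5-\operatorname{rank}\{\ell_1,\dots,\ell_4\}$, the rank of a $4\times 5$ matrix built from the $a_j$ and the values $m(a_j),m'(a_j)$. The equality $\operatorname{rank}\{\ell_1,\dots,\ell_4\}=3$ splits in two. For $\le 3$ (i.e.\ $\dim W(f)\ge 2$) one must produce $(\gamma_1,\dots,\gamma_4)\neq 0$ with $\sum_j\gamma_j m(a_j)\ell_j\equiv 0$ on $P_4$; evaluating on $1,x,x^2,x^3,x^4$ turns this into a homogeneous $5\times 4$ system in $\gamma$, whose solvability I would deduce from $\deg m=2\le\deg F_1-2$ together with the hypothesis that $f$ has at least two distinct multiple roots---equivalently, that the quadratic $q$ with $q^{2}\mid f$ can be taken separable, which is what keeps $m$ from degenerating against $F_1$. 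For $\ge 3$ (i.e.\ $\dim W(f)\le 2$) one shows such a $\gamma$ is unique up to scalar, equivalently that some three of the $\ell_j$ are already linearly independent on $P_4$; expanding in the Hermite basis of $P_4$ that interpolates value and derivative at two of the $a_j$ and value at a third, a relation among three of the $\ell_j$ would force rational identities among the $a_j$ and the $m(a_j)$ incompatible with two distinct multiple roots. As $n_1=r=4$ leaves only the patterns ``two double roots'', ``one double and one triple root'', and ``two triple roots'', one may, if a uniform treatment proves awkward, verify each inequality pattern by pattern by exactly this method.

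I expect the rank count of step two to be the main obstacle. The lower bound $\dim W(f)\ge 2$ is the genuinely new phenomenon---it is what gives $\dim W(f)=\mu+1$ rather than $\mu$---and it rests on isolating the one linear dependence among the four simple-root conditions $\ell_j$ and on recognising that this dependence arises from the reducibility of $q$: in the case of a single root of multiplicity $\ge 4$ (not covered by the present statement) the dependence is absent and $\dim W(f)$ returns to $\mu=1$. Establishing that exactly one such dependence exists, uniformly in the admissible multiplicity data, is the heart of the argument.
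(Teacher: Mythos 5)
Your plan engages only part~\ref{item: noneg}: part~\ref{item: bdsdim} is quoted as an input, and part~\ref{item: equdim} --- which is where most of the paper's work lies (the CRT and Hermite-interpolation arguments for $r\ge 2n_1-1$, Lemma~\ref{L: RAM} and Theorem~\ref{T: dimk>2s-2} for $r=2n_1-2$, and the small-$n_1$ cases) --- is not addressed at all. More importantly, the decisive step of your plan for \ref{item: noneg}, namely that the four functionals $\ell_1,\dots,\ell_4$ have rank exactly $3$, is only announced, and the route you propose for the upper bound cannot work: a dependence $\sum_j\gamma_j m(a_j)\ell_j\equiv 0$ is \emph{not} a consequence of the multiplicity pattern together with $\deg m=2\le\deg F_1-2$. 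Degeneracy at $n_1=r=4$ occurs only for special positions of the simple roots, which is why Theorem~\ref{T: counterEX} carries the qualifier ``with $\alpha_i$s appropriately chosen'' and why \S\ref{subsec: Z(3,2)isoZ(3,3)} must solve for particular configurations such as $(\alpha_1,\alpha_2,\alpha_3)=(0,\pm 1/\sqrt3)$ when $f/f_\alpha=(x^2-1)^2$: in the paper's reduction $\dim Z(4,4)=1+\dim\ker(\ev_4)$, and the kernel, identified with $Z(\wt\delta,\wt\alpha;3,2)$, is nonzero exactly when the one-dimensional space $W(f/(x-\alpha_4))$ is generated by a polynomial of degree at most $2$ --- a proper algebraic condition on the simple roots. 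Concretely, for $f=(x^2-1)^2\,x(x-2)(x-3)(x-\alpha_4)$ the three conditions $h'(\alpha_j)=d_{f_4}(\alpha_j)h(\alpha_j)$, $j=1,2,3$ (with $f_4=f/(x-\alpha_4)$), are already independent on $P_2$ (the $3\times3$ associated determinant equals $-60$), so $\ker(\ev_4)=0$, hence $\dim W(f)\le 1$ and, by part~\ref{item: bdsdim}, $\dim W(f)=\mu=1$ even though $n_1=r=4$ and $f$ has two distinct double roots. So the uniform statement you set out to prove is false; what can be (and is) proved is the existence of degenerate $W(f)$ for each admissible multiplicity pattern other than $x^4 f_\alpha$, and that requires exhibiting special simple roots, exactly the computations of \S\ref{sec: counterEX} and Appendix~\ref{sec: n_1=r=4} that your plan does not contain.

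There is also a concrete error in the setup of your functionals. From $\psi\equiv mF_1'\pmod{F_1}$ you may not read off $\psi'$ at the roots of $F_1$: the discarded multiple of $F_1$, say $F_1u$ with $u=m\,g'/g$, contributes $F_1'(a_j)u(a_j)\neq 0$ after differentiation. Writing $\psi=F_1u+(F_1m)'$ and evaluating gives
\[
\frac{\psi'(a_j)}{\psi(a_j)}=\frac{F_1''(a_j)}{F_1'(a_j)}+\frac{g'(a_j)}{g(a_j)}+\frac{2m'(a_j)}{m(a_j)}
=\frac{F_1''(a_j)}{F_1'(a_j)}+\sum_{i=1}^{n_2}\frac{3}{a_j-\beta_i}+\sum_{s=1}^{N_3}\frac{2(k_s-1)}{a_j-\gamma_s},
\]
i.e.\ the correct multiplier is $d(a_j)$ from (\ref{equ: d_f}) and Lemma~\ref{L: importL}, not your $\lambda_j=m'(a_j)/m(a_j)+F_1''(a_j)/F_1'(a_j)$. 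Since the $4\times5$ matrix whose rank you must control is built from these multipliers, this would have to be repaired before any pattern-by-pattern rank verification could even begin; and even then, as explained above, the rank can only be pinned down after the simple roots are specialized.
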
 

Note that by rewriting $n$ as $n_1 + 2n_2 + \dots + m \cdot n_m + \dots$, we obtain 
\[
	\textstyle	r = -2 + \sum_{i \geq 1} i n_i - (n_2 + 2N_3) = -2 + n_1 + n_2 + n_3 + \sum_{i \geq 4} (i-2) n_i.
\]
Using the lower bound of $\dim [ W(f) ]$ in Theorem~\ref{T: mainthm}.\ref{item: bdsdim}, we deduce if $W(f)$ vanishes then $r \leq n_1 -1$. Combining with the above expression of $r$, we obtain
\[
 	\textstyle	 n_2 + n_3 + \sum_{i  \geq4 } (i-2) n_i \leq 1.
\]
Since $n_i$ are nonnegative integers, we deduce from the above inequality that $(n_2, n_3) \in \{ (0, 0), (0, 1) , (1, 0) \}$ and $n_i  = 0$ for all $i \geq 4$. This condition is equivalent to saying that for all quadratic polynomials $q(x) \in \CC[x]$, $q(x)^2$ does not divide $f(x)$. So by taking the contrapositive, we prove the following corollary.

\begin{corollary} \label{C: maincor} If there exists a quadratic $q(x) \in \CC[x]$ such that $q^2(x)$ divides $f(x)$, then $W(f)$ is nonzero.
\end{corollary}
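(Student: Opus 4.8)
The plan is to obtain the corollary by contraposition directly from the lower bound $\dim[W(f)] \geq \mu$ of Theorem~\ref{T: mainthm}.\ref{item: bdsdim}; the work is purely bookkeeping with the multiplicities, as already indicated in the paragraph preceding the statement. So suppose $W(f) = 0$; the goal is to show that $q(x)^2 \nmid f(x)$ for every quadratic $q(x) \in \CC[x]$. From $0 = \dim[W(f)] \geq \mu = r + 1 - n_1$ we immediately get $r \leq n_1 - 1$.

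Next I would re-express $r$ through the multiplicity profile of $f$. Using $n = \sum_{i \geq 1} i\,n_i$ and $N_3 = \sum_{i \geq 3} n_i$ one finds, as recorded just after Theorem~\ref{T: mainthm}, the identity
\[
	r = -2 + n_1 + n_2 + n_3 + \sum_{i \geq 4}(i-2)\,n_i .
\]
Plugging this into $r \leq n_1 - 1$ turns the inequality into $n_2 + n_3 + \sum_{i \geq 4}(i-2)\,n_i \leq 1$. Since each $n_i$ is a nonnegative integer and $i - 2 \geq 2$ for $i \geq 4$, this forces $n_i = 0$ for all $i \geq 4$ together with $n_2 + n_3 \leq 1$; equivalently $(n_2,n_3) \in \{(0,0),(1,0),(0,1)\}$ and $f$ has no root of multiplicity $\geq 4$.

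The last step is to check that this combinatorial condition is precisely the negation of ``some quadratic square divides $f$''. I would split into cases according to whether a candidate quadratic $q$ has a repeated root. If $q = c(x-a)^2$, then $q(x)^2 \mid f(x)$ would force $(x-a)^4 \mid f(x)$, which is impossible since $f$ has no root of multiplicity $\geq 4$. If $q = c(x-a)(x-b)$ with $a \neq b$, then $q(x)^2 \mid f(x)$ would make both $a$ and $b$ roots of $f$ of multiplicity $\geq 2$; but the number of such roots is $n_2 + N_3 = n_2 + n_3 \leq 1$, so two distinct ones cannot exist. Hence $W(f) = 0$ implies $q(x)^2 \nmid f(x)$ for all quadratic $q$, and contraposition gives the corollary.

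Since the whole argument is a translation of Theorem~\ref{T: mainthm}.\ref{item: bdsdim}, the only genuine difficulty lies in that lower bound, which is proved separately; inside the present deduction the one spot that needs care is the final case analysis, making sure the profiles $(0,0),(1,0),(0,1)$ together with ``no multiplicity $\geq 4$'' match the non-divisibility statement exactly, degenerate $q$ included. One might instead try to prove the corollary directly by exhibiting a nonzero $p \in W(f)$ when $q(x)^2 \mid f(x)$ --- for instance built from $f$, $q$ and their derivatives --- but the obvious candidates such as $p = f/q$ or $p = f'/q$ need not lie in $W(f)$, so that approach would in the end require the structural input behind Theorem~\ref{T: mainthm} anyway, and the contrapositive route is cleaner.
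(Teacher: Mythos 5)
Your proposal is correct and follows exactly the paper's route: contraposition via the lower bound $\dim[W(f)] \geq \mu$ from Theorem~\ref{T: mainthm}.\ref{item: bdsdem}, the rewriting $r = -2 + n_1 + n_2 + n_3 + \sum_{i\geq 4}(i-2)n_i$, and the resulting constraint on the multiplicity profile. The only addition is that you spell out the final equivalence with the non-divisibility statement (the cases where $q$ has a repeated root versus distinct roots), which the paper asserts without detail; that is a small but worthwhile clarification, not a different argument.
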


In fact, Zarhin showed that \cite[Theorem~1.5.(ii)]{Zar12} the nonvanishing condition of $W(f)$ in Corollary~\ref{C: maincor} is sufficient. Suppose $f$ is monic with distinct roots $\alpha_1, \dots, \alpha_n$. We consider a holomorphic map $M: P_n \rarr \CC^n$ defined via $f(x) \longmapsto ( f'(\alpha_1), \dots, f'( \alpha_n) )$. At first, Zarhin used $W(f)$ to show \cite[Theorem 1.1]{Zar12} that the rank of the tangent map $d M$ is $n-1$ at all points of $P_n$. Based on these results, Zarhin proved \cite[Theorem 1.6]{Zar13} that the multipliers of any $n-1$ distinct periodic orbits, considered as multi-valued algebraic maps on $P_n$, are algebraically independent over $\CC$ under certain conditions. This question about algebraic independence was asked by Yu.~S.~Ilyashenko in the context of studying the Kupka-Smale property for volume-preserving polynomial automorphisms of $\CC^2$. (see \cite{BHI05, Gor16} for a detailed discussion).

Besides the role that $W(f)$ contributes to the proof of algebraic independence of multipliers, both \cite[Theorem~1.5]{Zar12} and alternative solutions of \cite[Theorem 1.1]{Zar12} provided by Victor S.~Kulikov \cite[\S4]{Zar12} and Elmer Rees \cite[\S1]{Rees15} show that $W(f)$ has some remarkable properties and so it might be of independent interest. As a byproduct of Theorem~\ref{T: mainthm}, we answer Rees's conjecture \cite[\S2]{Rees15} that the rank of $dM$ at $f$ is $n_1$ for arbitrary $f(x)$ allowing multiple roots. 

Let us compute $\dim [ W(f) ]$ for small $n$. In the following examples, we only consider nonzero $W(f)$s. By Corollary~\ref{C: maincor} and \cite[Theorem~1.5.(ii)]{Zar12}, $f(x) = q(x)^2 h(x)$ for a polynomial $h(x)$ and a quadratic $q(x) \in \CC[x]$. If $g(x) \in \CC[x]$ then  we write $R(x) = R(f, g)(x) := f''(x) g(x) - f'(x) g'(x)$. Notice that the defining condition $f(x) \mid R(f, p)(x)$ of $W(f)$ is preserved under an affine change of coordinates defined as $x \mapsto ax + b$ for any $a, b \in \CC, a \ne 0$. So by varying two parameters, we can assume $q(x) = x^2$ or $x^2 -1$, which depends on whether $q(x)$ has multiple roots.

\begin{example}[quartic polynomial] \label{eg: degf=4} \upshape If $n = 4$, then $W(f) = \CC \cdot q(x)^2 \Rarr \dim [ W(f) ] = 1$ from \cite[Theorem~1.5.(iv)]{Zar12}. Notice that $f(x) = c q(x)^2$ for some nonzero $c \in \CC$ by Corollary~\ref{C: maincor}. It follows that $(n_2, n_4) = (2, 0)$ or $(0, 1)$ and $n_i = 0$ elsewhere. In both cases $r = 0$, which implies that $\mu =1 = \dim [ W(f) ]$. 
\end{example}

\begin{example}[quintic polynomial] \label{eg: degf=5} \upshape Let $n = 5$, we have $\deg h = n - 2 \deg q = 1$. Assume $h$ is monic, then $h(x) = x - c$ for some $c \in \CC$.

Suppose $q(x) = x^2$. If $c \ne 0$, then $f(x) = x^4(x-c)$ contains exactly one simple root and one root of multiplicity four. From the condition $f \mid R$, one checks that elements in $W(f)$ are scalar multiples of $ x( 6x -  5c)$. So $\dim [ W(f) ] = 1$. If $c = 0$, then $f(x) = x^5$. Again, calculating from $f \mid R$ shows that $W(f) = \{ x^2 g(x) :  \deg g = 1 \}$. Therefore $\dim [ W(f) ] = 2$. 

Now suppose $q(x ) = x^2 -1$, we check that $\dim [ W(f) ] = 1$. If $q(c) \ne 0$, $f(x)$ contains exactly one simple and two double roots. The condition $f \mid R$ is equivalent to saying that $p(x) \in W(f)$ is a scalar multiple of $(x^2 - 1)( 6c x - 5c^2 - 1)$. On the other hand, if $q(c) = 0$, $f(x)$ has exactly one double root and one root of multiplicity three. From $f \mid R$, $p(x) \in W(f)$ if and only if it is a scalar multiple of $(x^2 - 1) (x - c)$. 
\end{example}

Complete results on related information of $\dim [W(f) ]$ for small $n$ values are summarized in Table~\ref{table: degf=4/5/6}. 

\begin{table}[ht]						
\caption{quartic, quintic and sextic polynomial $f(x)$} 	
\centering 										
	\begin{tabular}{|c | c c | c c c c| c c c c c c c|} 
		\hline
 		\diagbox[width=6em]{$$}{$\deg f$}	& 4& & & 5& & & & & &6 & & & \\
		\hline
		$n_2$ 			&0 &2& 0&0 &2 &1 & 0& 1& 0& 0& 3& 1& 2\\
		$N_3$ 			&1 &0 &1 &1 &0 &1 & 1& 1& 1& 1& 0& 1& 0\\
		$r$ 				&0 &0 &1 &1 &1 &0 & 2& 1& 2& 2& 1& 1& 2\\
		$n_1$ 			&0 &0 &1 &0 &1 &0 & 0 &0 &1 &2 &0 &1 &2 \\
		$\mu$			&1 &1 &1 &2 &1 &1 & 3& 2& 2& 1& 2& 1& 1 \\
		\hline
		$\dim[ W(f) ]$ 		&1 &1 & 1&2 &1 &1 & 3& 2& 2& 1& 2& 1& 1\\
		\hline
	\end{tabular}
\label{table: degf=4/5/6}
\end{table}

\subsection{Notation}  \label{subsec: nota} Let $R(f)$ be the set of distinct roots of $f(x)$.  Let $R_i(f) \subseteq R(f)$ be the set of roots of $f(x)$ with multiplicity exactly $i$, as before $n_i := | R_i( f ) |$. In the course of the proof of $\dim [ W(f) ] = \mu$ (Theorem~\ref{T: mainthm}.\ref{item: equdim}), a key step is to group elements in $R(f)$ according to different multiplicities. So we divide $R(f)$ into the following three sets
\[
	\textstyle \alpha := R_1(f),\;  \beta := R_2(f), \text{ and } \gamma := \bigcup_{m \geq 3} R_m(f).
\]
Notice that $N_3 = | \gamma |$ and elements $\alpha_i \in \alpha, \beta_j \in \beta$ are simple and double roots of $f(x)$ respectively. Since multiplicities of elements in $\gamma$ may change, we set $k_s$ to be the multiplicity of each elementa $\gamma_s \in \gamma$. 

Let $f_i(x) := \prod_{ \Grr \in R_i(f) } (x - \Grr)$,  and $f_{\alpha} (x) := f_1(x), f_{\beta} ( x)  := f_2(x), f_{ \gamma } (x) := \prod_{m \geq 3} f_m(x)$. For any $p(x) \in W(f)$, the advantage of dividing $R(f)$ into $\alpha, \beta$, and $\gamma$ is that we can view the condition $f(x) \mid R(f, p)(x)$ as three separate relations:
\[
	\textstyle  f_{\beta} (x)^2 \mid R(f, p) (x) ,  \;   \prod_{i \geq 3} [ f_i(x) ]^i  \mid R(f, p)(x), \text{ and }  f_{\alpha} (x) \mid R(f, p)(x). \tag{\ref{subsec: nota}.1}  \label{equ: divcond}
\]
In \S\ref{sec: subW23}, we show the first two conditions of (\ref{equ: divcond}) are equivalent to $f_{\beta} (x)[  f_{\gamma}(x) ]^2 \mid p(x)$. Let $p_{\alpha} ( x ) := p(x) / [ f_{\beta} (x) f_{\gamma} (x)^2 ]$, the last condition of (\ref{equ: divcond}) can be reduced to an interpolation of the rational function $p_{\alpha} '(x) / p_{\alpha} ( x ) $ at simple roots of $f$. More precisely, let 
\[
	d(x) = d_f(x) := \frac{ f_{\alpha}''(x) }{ f_{\alpha}'(x) } + \sum_{i = 1}^{n_2} \frac{ 3 }{x - \beta_i} + \sum_{s = 1}^{N_3} \frac{ 2(k_s - 1) }{x - \gamma_s}, \tag{\ref{subsec: nota}.2} \label{equ: d_f}
\]
the following lemma is an equivalent version of (\ref{equ: divcond}).

\begin{lemma} \label{L: importL} Let $p(x)$ be a polynomial of degree at most $n-2$. Then $p(x) \in W(f)$ if and only if 
\begin{enumerate}
	\item $f_{\beta} ( x ) f_{\gamma}^2( x )$ divides $p (x  )$; \label{item: multRootdiv}
	\item The function $p_{\alpha} ' ( x) - d( x) p_{\alpha}( x)$ at all simple roots $\alpha_i$ of $f(x)$. \label{item: dinterp}
\end{enumerate} 
\end{lemma}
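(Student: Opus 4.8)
The plan is to split the single divisibility $f \mid R(f,p)$ along the coprime factorization of $f$ recorded in~(\ref{equ: divcond}), to dispose of the part supported on multiple roots using the reduction carried out in \S\ref{sec: subW23}, and then to translate what remains, the divisibility of $R(f,p)$ by $f_\alpha$, into the interpolation condition~\ref{item: dinterp} at the simple roots. First I would write $f = c\, f_\alpha f_\beta^2 \prod_{m\ge 3} f_m^m$ with $c \in \CC \setminus \{0\}$; since the root sets $\alpha$, $\beta$, $\gamma$ are pairwise disjoint, the three factors $f_\alpha$, $f_\beta^2$, $\prod_{m\ge 3} f_m^m$ are pairwise coprime, and hence $f \mid R(f,p)$ holds if and only if all three relations of~(\ref{equ: divcond}) do. By the reduction of \S\ref{sec: subW23}, the first two of these are together equivalent to $f_\beta f_\gamma^2 \mid p$, that is, to condition~\ref{item: multRootdiv}; so the lemma will follow once I show that, assuming $f_\beta f_\gamma^2 \mid p$, the last relation $f_\alpha \mid R(f,p)$ is equivalent to condition~\ref{item: dinterp}, namely $p_\alpha'(\alpha_i) - d(\alpha_i) p_\alpha(\alpha_i) = 0$ for every simple root $\alpha_i$.

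For this, since $f_\alpha$ has only simple roots, $f_\alpha \mid R(f,p)$ amounts to $R(f,p)(\alpha_i) = 0$ for each $i$. Setting $F := c\, f_\beta^2 \prod_{m\ge 3} f_m^m$ and $G := f_\beta f_\gamma^2$, so that $f = f_\alpha F$ and $p = G\, p_\alpha$, I would differentiate the two products, evaluate at $\alpha_i$ --- where $f_\alpha(\alpha_i) = 0$ while $f_\alpha'(\alpha_i)$, $F(\alpha_i)$, $G(\alpha_i)$ are all nonzero --- and factor out $f_\alpha'(\alpha_i) F(\alpha_i) G(\alpha_i)$ to obtain
\[
 R(f,p)(\alpha_i) \;=\; f_\alpha'(\alpha_i)\, F(\alpha_i)\, G(\alpha_i)\,\bigl[\, d_0(\alpha_i)\, p_\alpha(\alpha_i) - p_\alpha'(\alpha_i)\,\bigr],
 \qquad d_0 := \frac{f_\alpha''}{f_\alpha'} + 2\frac{F'}{F} - \frac{G'}{G}.
\]
Then I would expand the logarithmic derivatives, $F'/F = 2 f_\beta'/f_\beta + \sum_{m\ge3} m\, f_m'/f_m$ and $G'/G = f_\beta'/f_\beta + 2\sum_{m\ge3} f_m'/f_m$, so that $2F'/F - G'/G = 3 f_\beta'/f_\beta + 2\sum_{m\ge3}(m-1) f_m'/f_m$, and rewrite each logarithmic derivative as a sum of simple fractions; collecting terms turns $2F'/F - G'/G$ into $\sum_{i=1}^{n_2}\tfrac{3}{x-\beta_i} + \sum_{s=1}^{N_3}\tfrac{2(k_s-1)}{x-\gamma_s}$, so that $d_0$ coincides with the function $d$ of~(\ref{equ: d_f}). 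Hence $R(f,p)(\alpha_i) = 0$ if and only if $p_\alpha'(\alpha_i) = d(\alpha_i) p_\alpha(\alpha_i)$, which is condition~\ref{item: dinterp}. (When $n_1 = 0$ there are no simple roots and $f_\alpha = 1$, and both $f_\alpha \mid R(f,p)$ and condition~\ref{item: dinterp} hold vacuously.)

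The one genuinely delicate ingredient is the reduction, quoted from \S\ref{sec: subW23}, of the first two conditions of~(\ref{equ: divcond}) to $f_\beta f_\gamma^2 \mid p$: this needs a careful analysis of the exact order of vanishing of $R(f,p) = f''p - f'p'$ at each double root and at each root of multiplicity $\ge 3$, and is where the structure of the operator $g \mapsto R(f,g)$ really enters. The simple-root computation above is routine logarithmic-derivative bookkeeping; the only things to watch are that $F(\alpha_i)$ and $G(\alpha_i)$ are nonzero, so that dividing by them is legitimate, and that the scalar $c$ together with every common nonzero factor cancels out of the final criterion.
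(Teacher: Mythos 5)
Your proof is correct and follows essentially the same route as the paper: split $f \mid R(f,p)$ along the coprime factors $f_\alpha$, $f_\beta^2$, $\prod_{m\ge 3}f_m^m$, invoke the reduction of \S\ref{sec: subW23} to replace the multiple-root conditions by $f_\beta f_\gamma^2 \mid p$, and then evaluate $R(f,p)$ at the simple roots and factor out the nonvanishing cofactor to obtain the interpolation condition. The only cosmetic difference is that you package the non-$\alpha$ part of $d$ as $2F'/F - G'/G$ for the cofactors $F = f/f_\alpha$ and $G = f_\beta f_\gamma^2$ of $f$ and $p$, whereas the paper assembles the same rational function by direct algebraic manipulation of the expanded derivatives; the two computations agree.
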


Let $k, s$ be positive integers, given two $s$-tuples $\eta = (\eta_1, \dots, \eta_s), \omega = ( \omega_1, \dots, \omega_s )$ in $\CC^s$ with $\omega_i \ne \omega_j$. Mimicking (\ref{L: importL}.\ref{item: dinterp}) in the previous lemma, we define a polynomial space
\[
		Z( \eta, \omega , s, k) := \{ p \in \CC[x] : \deg p \leq k, p' ( \omega_i ) = \eta_i p( \omega_i ) \; \forall \; 1 \leq i \leq s \}.  \tag{\ref{subsec: nota}.3} \label{stat: defnZ}
\] 
We mostly consider the case $Z(n_1, r) = Z(\delta, \alpha, n_1, r)$ where $\delta = ( d(\alpha_1), \dots, d( \alpha_{n_1} ) ), \alpha = (\alpha_1, \dots, \alpha_{n_1} )$ are two $n_1$-tuples. Viewing $W(f)$ as a special type of $Z(\eta, \omega, k ,s )$ is the motivation of (\ref{stat: defnZ}) and another important step to prove Theorem~\ref{T: mainthm}. 

Note $\deg p \leq \deg f - 2$ if and only if $\deg p_{\alpha} \leq r$. Furthermore if $p, q \in W(f)$ and $\lambda \in \CC$
\[
	(p+ \lambda q)_{\alpha} ( x ) = \frac{ p ( x ) + \lambda q(x) }{ f_{\beta} ( x ) f_{\gamma} ( x )^2 } =  \frac{ p(x) }{   f_{\beta} ( x ) f_{\gamma} ( x )^2  } + \lambda \frac{ q( x ) }{  f_{\beta} ( x ) f_{\gamma} ( x )^2  } =p_{\alpha} (x) + \lambda q_{\alpha} ( x).
\]
By Lemma~\ref{L: importL}, the map $\phi: W(f) \rarr Z( \delta, \alpha , n_1, r )$ defined via $p(x) \mapsto p_{\alpha}(x)$ is a linear isomorphism. The map $\psi: p(x) \longmapsto f_{\beta} ( x ) f_{\gamma}(x)^2 p(x)$ from $Z( n_1, r)$ to $W(f)$ is the inverse of $\phi$. We formally state this result as the following corollary.

\begin{corollary}  \label{C: CimportL} $W(f)$ and $Z( n_1, r )$ are isomorphic (via $\phi$). In particular, $\dim [ W(f) ] = \dim [ Z(n_1, r) ]$.
\end{corollary}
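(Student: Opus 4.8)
The plan is to verify directly that the two maps $\phi$ and $\psi$ described in the paragraph preceding the statement are mutually inverse linear isomorphisms between $W(f)$ and $Z(n_1,r)$, the dimension equality being an immediate consequence. The only real content is that both maps are \emph{well-defined} with the claimed targets; linearity and the fact that they are inverse to each other on the level of rational functions are formal.

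\medskip

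First I would check that $\phi$ lands in $Z(n_1,r)$. Take $p(x)\in W(f)$. By Lemma~\ref{L: importL}.\ref{item: multRootdiv}, $f_\beta(x)f_\gamma(x)^2$ divides $p(x)$, so $p_\alpha(x):=p(x)/[f_\beta(x)f_\gamma(x)^2]$ is a genuine polynomial. Since $\deg[f_\beta f_\gamma^2]=n_2+2N_3$ and $\deg p\le n-2$, we get $\deg p_\alpha\le (n-2)-(n_2+2N_3)=r$ by the definition of $r$; conversely this chain of equalities shows $\deg p\le n-2 \iff \deg p_\alpha\le r$, which is the remark noted just before the corollary. Finally, Lemma~\ref{L: importL}.\ref{item: dinterp} says precisely that $p_\alpha'(\alpha_i)-d(\alpha_i)p_\alpha(\alpha_i)=0$ for every simple root $\alpha_i$, i.e. $p_\alpha'(\alpha_i)=d(\alpha_i)\,p_\alpha(\alpha_i)$; with $\delta=(d(\alpha_1),\dots,d(\alpha_{n_1}))$ and the simple roots $\alpha_1,\dots,\alpha_{n_1}$ pairwise distinct, this is exactly the defining condition of $Z(\delta,\alpha,n_1,r)=Z(n_1,r)$ in (\ref{stat: defnZ}). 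Hence $\phi(p)=p_\alpha\in Z(n_1,r)$.

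\medskip

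Next I would check that $\psi$ lands in $W(f)$ and inverts $\phi$. Given $g(x)\in Z(n_1,r)$, set $p(x):=f_\beta(x)f_\gamma(x)^2 g(x)$. Then $f_\beta f_\gamma^2\mid p$ by construction, so condition \ref{item: multRootdiv} of Lemma~\ref{L: importL} holds; $\deg p=(n_2+2N_3)+\deg g\le (n_2+2N_3)+r=n-2$; and $p_\alpha=g$, so condition \ref{item: dinterp} holds because $g$ satisfies the interpolation equations defining $Z(n_1,r)$. By Lemma~\ref{L: importL}, $p\in W(f)$, i.e. $\psi(g)\in W(f)$. Linearity of both maps is clear, and the displayed computation $(p+\lambda q)_\alpha=p_\alpha+\lambda q_\alpha$ just before the corollary already records $\CC$-linearity of $\phi$ (and the analogous identity for $\psi$ is immediate from distributivity of polynomial multiplication). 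The composites are the identity: $\phi(\psi(g))=\phi(f_\beta f_\gamma^2 g)=g$ and $\psi(\phi(p))=f_\beta f_\gamma^2\cdot(p/[f_\beta f_\gamma^2])=p$, the latter being legitimate because the division in $\phi(p)$ is exact by \ref{item: multRootdiv}. Therefore $\phi$ is a linear isomorphism with inverse $\psi$, and in particular $\dim[W(f)]=\dim[Z(n_1,r)]$.

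\medskip

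There is essentially no obstacle here: the corollary is a bookkeeping restatement of Lemma~\ref{L: importL}, and the one point deserving a line of care is that the degree bookkeeping $\deg[f_\beta f_\gamma^2]=n_2+2N_3$ together with $r=(n-2)-(n_2+2N_3)$ makes the degree constraint "$\deg p\le n-2$" correspond exactly to "$\deg p_\alpha\le r$," so that $\phi$ and $\psi$ respect the degree truncations built into the two spaces. All the substantive work — that the divisibility $f\mid R(f,p)$ unpacks into conditions \ref{item: multRootdiv} and \ref{item: dinterp} — has already been done in Lemma~\ref{L: importL}, which we are entitled to invoke.
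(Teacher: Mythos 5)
Your proposal is correct and takes essentially the same approach as the paper: the paper's ``proof'' is just the paragraph immediately preceding the corollary, which records the degree bookkeeping, the linearity computation, and the appeal to Lemma~\ref{L: importL} to conclude that $\phi$ and $\psi$ are mutually inverse. You have simply spelled out in full the well-definedness checks that the paper leaves implicit.
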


Note that Lemma~\ref{L: importL} also partially checks Corollary~\ref{C: maincor}. Let $r \geq 2n_1$ (i.e. $n -2 \geq 2n_1 + n_2 + 2N_3$.), then $p_0(x) := f_{\alpha}^2(x) f_{\beta} (x) f_{\gamma}^2(x)$ is of degree at most $n-2$. Since $p_0(x)$ has double roots at all $\alpha_i$s, by Lemma~\ref{L: importL} $p_0(x)$ is an nonzero element in $W(f)$.


\section{Study of $W(f)$ for $f$ modulo simple roots} \label{sec: subW23} 

\subsection{Proof of Theorem~\ref{T: mainthm}.\ref{item: equdim} (when $n_1 =0$)}  \label{subsec: n_1=0=>dim=mu} 

We claim Theorem~\ref{T: mainthm}.\ref{item: equdim} in the case $n_1 =0$. We set $\wt{ f }_{\gamma} (x ) :=  \prod_{m \geq 3} [ f_m(x) ]^m$. Recall $f_{\alpha}(x) = f_1(x), f_{\beta}(x) = f_2(x)$ in \S\ref{subsec: nota}. Since $f(x) = \prod_{m \geq 1} [ f_m(x) ]^m$, we can say
\[
	\textstyle    \wt{ f }_{\gamma} (x ) =  \prod \nolimits_{m \geq 3} [ f_m(x) ]^m = f(x) / [  f_{\alpha} ( x )  f_{\beta} (x)^2 ]. \tag{\ref{subsec: n_1=0=>dim=mu}.2}
\]
We can view $W(f)$ as intersections of the following vector spaces.
\[
	\begin{cases}
		W(f, \alpha) &:=  \{ p(x) \in \CC[x] : \deg p \leq (n - 2), f_{\alpha}(x) \text{ divides } R(f, p)(x) \};	\\
		W(f, \beta) &:=  \{p(x) \in \CC[x] : \deg p \leq (n - 2), f_{\beta}^2(x) \text{ divides } R(f, p)(x) \};	\\ 
		W(f, \gamma) &:=  \{ p(x) \in \CC[x] :  \deg p \leq (n - 2), \wt{f}_{\gamma}(x) \text{ divides }R(f, p)(x) \}.
	\end{cases}
	\tag{\ref{subsec: n_1=0=>dim=mu}.3}
	\label{equ: W(-,f)}
\]

\begin{rem} \label{R: R_1=0} \upshape In particular, if $R_1(f) = \varnothing$ (i.e. $f_{\alpha}(x) \equiv 1$) then $W(f, \alpha)$ is the space of all polynomials with degree at most $n - 2$, which means $W(f) = W(f, \beta) \cap W(f, \gamma)$.
\end{rem}

The advantage of doing this is that spaces introduced in (\ref{equ: W(-,f)}) are much easier to characterize as the following theorem shows. 

\begin{theorem} \label{T: thmMRoot} Let $p(x)$ be a polynomial of degree at most $n-2$. 
	\begin{enumerate}	
		\item $p(x) \in W(f, \beta)$ if and only if $f_{\beta}(x)$ divides $p(x)$;
		\item $p(x) \in W(f, \gamma)$ if and only if $f_{\gamma}(x)^2$ divides $p(x)$.
	\end{enumerate}
\end{theorem}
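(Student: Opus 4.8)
The plan is to analyze the divisibility condition $f_\beta^2(x) \mid R(f,p)(x)$ (respectively $\wt f_\gamma(x) \mid R(f,p)(x)$) root-by-root, using the fact that $f_\beta$ has only simple roots while each $\gamma_s$ appears in $\wt f_\gamma$ with multiplicity $k_s \geq 3$. The engine behind both statements is the elementary observation that $R(f,p) = f''p - f'p'$ behaves predictably under the substitution $x = \beta_j + t$ (or $x = \gamma_s + t$): if $f$ has a root of multiplicity $m$ at $c$, write $f(x) = (x-c)^m g(x)$ with $g(c) \neq 0$, and compute $f'(x) = (x-c)^{m-1}\bigl(m g(x) + (x-c)g'(x)\bigr)$ and $f''(x) = (x-c)^{m-2}\bigl(m(m-1)g(x) + 2m(x-c)g'(x) + (x-c)^2 g''(x)\bigr)$. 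Substituting into $R(f,p)$ and factoring out the common power of $(x-c)$ will let me read off exactly which derivatives of $p$ at $c$ must vanish.

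Concretely, for part (1) I would take $c = \beta_j$ a double root, so $m = 2$. Then $f' = (x-\beta_j)(2g + (x-\beta_j)g')$ and $f'' = 2g + 4(x-\beta_j)g' + (x-\beta_j)^2 g''$, none of whose leading terms vanish at $\beta_j$ — i.e. $\ord_{\beta_j} f'' = 0$ while $\ord_{\beta_j} f' = 1$. Thus $\ord_{\beta_j} R(f,p) = \ord_{\beta_j}(f'' p - f' p')$, and since the $f''p$ term has order equal to $\ord_{\beta_j} p$ while the $f'p'$ term has order $\geq 1$, one checks that $f_\beta^2 \mid R(f,p)$ at $\beta_j$ forces $\ord_{\beta_j} p \geq 2$... but wait, that would give $f_\beta^2 \mid p$, not $f_\beta \mid p$. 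The correct bookkeeping: $f_\beta^2(x) = \prod_j (x-\beta_j)^2$ contributes order $2$ at $\beta_j$ to the divisor, but $f(x)$ itself already contributes $(x-\beta_j)^2$, and $R(f,p)$ automatically inherits some vanishing from $f'$ and $f''$. I expect the clean statement to come out as: writing $p(x) = (x-\beta_j)^a u(x)$ with $u(\beta_j)\neq 0$, the lowest-order term of $R(f,p)$ at $\beta_j$ has order $\min(a, 1) = $ well, I need to be careful — the $f''p$ term has order $a$ and the $f'p'$ term has order $\geq \min(a, 1)+\ldots$; if $a = 0$ then $R$ has order $0$ (bad), if $a = 1$ then the order-$1$ coefficients of $f''p$ and $f'p'$ must be compared and one shows they don't cancel unless forced, giving order exactly... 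Working this out carefully is the crux: the upshot should be that $f_\beta \mid p$ is both necessary and sufficient, the asymmetry between the exponent $2$ on $f_\beta$ in the divisibility condition and the exponent $1$ in the conclusion being exactly accounted for by the intrinsic vanishing of $f', f''$ at a double root.

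For part (2), the same substitution at $c = \gamma_s$ with multiplicity $m = k_s \geq 3$ gives $\ord_{\gamma_s} f' = k_s - 1$ and $\ord_{\gamma_s} f'' = k_s - 2$. The divisibility $\wt f_\gamma \mid R(f,p)$ requires order $\geq k_s$ at $\gamma_s$ (since $\wt f_\gamma = \prod_m f_m^m$ puts multiplicity $k_s$ on $\gamma_s$). Writing $p = (x-\gamma_s)^a u$, the $f''p$ term has order $k_s - 2 + a$ and the $f'p'$ term has order $\geq k_s - 1 + (a-1) = k_s - 2 + a$ (with equality when $a \geq 1$), so I must track possible cancellation of the bottom coefficients. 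The expectation is that $a \geq 2$ is forced and sufficient, i.e. $f_\gamma^2 \mid p$; the leading-coefficient comparison will show that $a = 0$ and $a = 1$ both fail because the relevant coefficient of $R(f,p)$, a concrete expression in $k_s$ and the Taylor coefficients of $g$ and $u$, is nonzero — here the hypothesis $k_s \geq 3$ (rather than $2$) is what makes the combinatorial factor not vanish.

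The main obstacle will be the cancellation analysis in the borderline cases ($a = 1$ for part (1), $a = 1$ for part (2)): one must verify that the lowest-degree coefficient of $R(f,p)$ at the relevant root genuinely does not vanish, which requires writing out the Taylor expansions of $f$ and $p$ to one extra order and checking that the coefficient — something like $(m-1)m \cdot g(c)u(c) - m \cdot g(c) \cdot (\text{coefficient of } p') $ reorganized — is a nonzero multiple of $g(c)u(c)$. Once that algebraic identity is pinned down, both directions of both equivalences follow formally, and sufficiency (if $f_\beta \mid p$ then $f_\beta^2 \mid R(f,p)$, etc.) is the easy direction since it only needs an order count with no cancellation subtleties.
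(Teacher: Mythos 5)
Your local Taylor-expansion strategy is essentially the paper's: after reducing to one root at a time (exactly as you propose), the paper writes $f = (x-\gamma)^m \widetilde f$, sets $Q := R(f,p)/(x-\gamma)^{m-2}$, computes $Q(\gamma) = m(m-1)\widetilde f(\gamma)p(\gamma)$ and $Q'(\gamma) = m(m+1)\widetilde f'(\gamma)p(\gamma) + m(m-2)\widetilde f(\gamma)p'(\gamma)$, and uses $m \geq 3$ to keep $m(m-1)$ and $m(m-2)$ nonzero; for double roots it evaluates $R(\beta) = f''(\beta)p(\beta)$ and $R'(\beta) = f'''(\beta)p(\beta)$ directly.

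The gap is a misplaced confidence about which direction is cancellation-free. Writing $p = (x-c)^a u$ with $u(c)\neq 0$ and $f = (x-c)^m g$ with $g(c)\neq 0$, the leading Taylor coefficient of $R(f,p)$ (at order $m-2+a$) is $m\,g(c)\,u(c)\,(m-1-a)$, which is \emph{not} always a nonzero multiple of $g(c)u(c)$: the factor $m-1-a$ vanishes when $a = m-1$. For part (1), where $m=2$, that is $a=1$ --- exactly the case you need for sufficiency. There, $f''p$ and $f'p'$ each contribute at order $1$ with the identical leading coefficient $2g(\beta)u(\beta)$, and it is precisely their \emph{cancellation} that pushes $\ord_\beta R(f,p)$ up to $\geq 2$. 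Your closing remark that sufficiency ``only needs an order count with no cancellation subtleties'' is therefore wrong for part (1), and contradicts your own earlier (correct) identification of $a=1$ as a borderline case requiring careful bookkeeping. In the paper's direct-evaluation framing the same cancellation appears as $R'(\beta) = f'''(\beta)p(\beta) - f'(\beta)p''(\beta) = f'''(\beta)p(\beta)$, the $f'p''$ term dropping out because $f'(\beta)=0$. For part (2) your intuition is correct: since $m\geq 3$, the factor $m-1-a$ is nonzero for $a\in\{0,1\}$, so there necessity uses the nonvanishing coefficient while sufficiency really is the trivial order count $m-2+a\geq m$.
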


Since elements in the set $\Gamma := \{ (x- \beta_j)^2, ( x- \gamma_s)^{k_s} : 1 \leq j \leq n_2, 1 \leq s \leq N_3 \}$ are pairwise coprime, the condition $f_{\beta}(x)^2 \wt{ f }_{\gamma} ( x ) \mid R(f, p) (x )$ can be considered separately and equally as $g(x) \mid R(f, p)( x )$ for all $g \in \Gamma$. So by induction, Theorem~\ref{T: thmMRoot} reduces to the following lemma. 

\begin{lemma} \label{L: lemMRoot} Let $p(x) \in W(f)$. 
	\begin{enumerate}
		\item If $\beta \in R_2(f)$, then $(x - \beta)^2$ divides $R(f, p)(x)$ if and only if $(x - \beta)$ divides $p(x)$;
		\item If $\gamma \in R_m(f) \; (m \geq 3)$, then $(x - \gamma)^m$ divides $R(f, p)(x)$ if and only if $(x - \gamma)^2$ divides $p(x)$.
	\end{enumerate}
\end{lemma}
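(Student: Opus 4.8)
The two statements share the same mechanism, so I would set up a single local computation and then specialize. Fix a root $\rho$ of $f$ of multiplicity $m$ (so $m=2$ in part (1) and $m\geq 3$ in part (2)), and write $f(x)=(x-\rho)^m u(x)$ with $u(\rho)\neq 0$. The plan is to expand $R(f,p)(x)=f''(x)p(x)-f'(x)p'(x)$ near $x=\rho$ and read off the order of vanishing of $R$ at $\rho$ in terms of the order of vanishing $\ell:=\ord_\rho p$ of $p$ at $\rho$. First I would compute $\ord_\rho f' = m-1$ and $\ord_\rho f'' = m-2$ (using $m\geq 2$, and noting $f''$ genuinely has order $m-2$ since the leading coefficient $m(m-1)u(\rho)\neq 0$). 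Then, writing $p(x)=(x-\rho)^\ell v(x)$ with $v(\rho)\neq 0$ if $p\not\equiv 0$, I get $\ord_\rho(f''p) = m-2+\ell$ and $\ord_\rho(f'p') \geq m-1+\ell-1 = m+\ell-2$, with equality when $\ell\geq 1$. So the two terms $f''p$ and $f'p'$ have the \emph{same} order $m+\ell-2$ at $\rho$, and the key point is whether their leading coefficients cancel.

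The heart of the argument is this leading-coefficient comparison. Writing $f(x)=(x-\rho)^m u(x)$ and $p(x)=(x-\rho)^\ell v(x)$ and extracting the coefficient of $(x-\rho)^{m+\ell-2}$ in $f''p - f'p'$, a direct Leibniz expansion gives a leading coefficient proportional to $u(\rho)v(\rho)\big(m(m-1) - m\ell\big) = m\,u(\rho)v(\rho)(m-1-\ell)$ — that is, the obstruction to $R$ vanishing to higher order is exactly the factor $(m-1-\ell)$. Hence: if $\ell \leq m-2$ then $\ord_\rho R = m+\ell-2$ exactly; if $\ell = m-1$ the leading terms cancel and $\ord_\rho R \geq m+\ell-1 = 2m-2$; and if $\ell\geq m$ then trivially $\ord_\rho R \geq m+\ell-2 \geq 2m-2$. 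Now I specialize. For (1), $m=2$: $(x-\rho)^2\mid R$ means $\ord_\rho R\geq 2$, i.e. $m+\ell-2 = \ell \geq 2$ unless cancellation occurs; cancellation occurs exactly when $\ell = m-1 = 1$, which already gives $\ell\geq 1$; and $\ell\geq 2$ also gives $(x-\rho)\mid p$. So $(x-\rho)^2\mid R \iff \ell\geq 1 \iff (x-\rho)\mid p$. For (2), $m\geq 3$: $(x-\rho)^m\mid R$ means $\ord_\rho R\geq m$. If $\ell\leq m-2$ then $\ord_\rho R = m+\ell-2\geq m \iff \ell\geq 2$; if $\ell=m-1\geq 2$ then $\ord_\rho R\geq 2m-2\geq m$ and also $\ell\geq 2$; if $\ell\geq m\geq 3$ then again $\ell\geq 2$. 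Conversely $\ell\geq 2$ forces $\ord_\rho R\geq \min(m+\ell-2,\,2m-2)\geq m$ in all cases. So $(x-\rho)^m\mid R \iff \ell\geq 2 \iff (x-\rho)^2\mid p$.

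The step I expect to be the main obstacle — really the only non-bookkeeping step — is pinning down the leading coefficient of $f''p-f'p'$ at $\rho$ cleanly, in particular making sure the factor $(m-1-\ell)$ is extracted correctly and that no lower-order miracle cancellations happen for $\ell\leq m-2$ (the case $\ell=m-1$ being the genuine exceptional cancellation). I would handle this by working with the substitution $y=x-\rho$ and the factorizations $f=y^m u$, $p=y^\ell v$, differentiating via the product rule, and collecting the coefficient of $y^{m+\ell-2}$; the cross terms involving $u'$ and $v'$ all land in strictly higher order, so only the "pure power" contributions $y^m\cdot(y^\ell v)'' $-type terms matter and the computation collapses to the scalar identity $m(m-1)-m\ell$. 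One should also dispose of the degenerate case $p\equiv 0$ separately (both sides of each equivalence hold vacuously). Everything else is the order-of-vanishing arithmetic carried out above.
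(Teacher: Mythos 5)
Your proof is correct, and it takes a genuinely different (and more unified) route than the paper's. The paper treats the two multiplicities separately and argues by explicit evaluation: for a double root $\beta$ it computes $R(f,p)(\beta)=f''(\beta)p(\beta)$ and $R(f,p)'(\beta)=f'''(\beta)p(\beta)$ directly and uses $f''(\beta)\neq 0$; for $\gamma$ of multiplicity $m\geq 3$ it writes $f=(x-\gamma)^m\widetilde f$, forms the auxiliary polynomial $Q(x)=R(f,p)(x)/(x-\gamma)^{m-2}$, re-expands $Q$ in powers of $(x-\gamma)$, and evaluates $Q(\gamma)=m(m-1)\widetilde f(\gamma)p(\gamma)$ and $Q'(\gamma)=m(m+1)\widetilde f'(\gamma)p(\gamma)+m(m-2)\widetilde f(\gamma)p'(\gamma)$ to extract the conditions $p(\gamma)=p'(\gamma)=0$. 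You instead run a single order-of-vanishing argument uniformly in $m\geq 2$: with $f=(x-\rho)^m u$, $p=(x-\rho)^\ell v$, you isolate the coefficient of $(x-\rho)^{m+\ell-2}$ in $f''p-f'p'$ as $m(m-1-\ell)u(\rho)v(\rho)$, so $\ord_\rho R(f,p)=m+\ell-2$ exactly unless $\ell=m-1$, and both equivalences then drop out of elementary inequality chasing. I checked the leading-coefficient extraction and the case analysis ($\ell\le m-2$, $\ell=m-1$, $\ell\geq m$, plus $p\equiv 0$) and they are all sound; the edge case $\ell=0$ is also consistent, since then $\ord_\rho(f'p')\geq m-1>m-2=\ord_\rho(f''p)$, matching the formula with $m\ell=0$. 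What your approach buys is uniformity and transparency: the single scalar $m-1-\ell$ names the exact obstruction and exposes the unique cancellation locus $\ell=m-1$, whereas the paper's proof is more computational and keeps everything at the level of concrete polynomial evaluations (which some readers may prefer, as it avoids any valuation-theoretic language). Either is a complete proof; yours is arguably the cleaner conceptual packaging of the same local computation.
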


\begin{proof}
Let $x = \beta$ be a double root of $f(x)$. Then $f(\beta) = f'(\beta) = 0$ and $f''(\beta) \ne 0$. First we compute the derivative of $R(f, p)(x)$,
\[
		R(f,p)'(x) = [ f'''(x)p(x) + f''(x)p'(x) ] -[ f''(x)p'(x) + f'(x)p''(x) ]  = f'''(x)p(x) - f'(x)p''(x).
\]
So it follows from the above formula of $R(f, p)(x)$ and $R'(f, p)(x)$ that $R(f, p)(\beta) = f''(\beta)p(\beta)$, $R'(f, p)(\beta) = f'''(\beta)p(\beta)$, and
	\[
		(x - \beta)^2 | R(f, p)(x) \iff R(f, p)(\beta) = R'(f, p)(\beta) = 0.
	\]
Since $f''(\beta) \ne 0$, $R(f, p)(\beta) = 0 \iff p(\beta) = 0$. Combining it with $R(f, p)'(\beta) = f'''(\beta) p(\beta)$ we have 
	\[
		R(f, p)(\beta) = R(f, p)'(\beta) = 0 \iff p(\beta) = 0.
	\]
Similarly, assume $\gamma \in R_m(f)$ with $m \geq 3$. This means that $f(x) = (x - \gamma)^m \wt{f}(x)$ where $\wt{f}(\gamma) \neq 0$. So, we can express $f'(x)$ and $f''(x)$ using $\wt{f}(x), \wt{f}'(x)$, and $\wt{f}''(x)$.
	\begin{align*}
		f'(x) &= m (x - \gamma)^{m  -1}\wt{f}(x) + (x - \gamma)^m \wt{f}'(x), \\
		f''(x) &= m(m  - 1)(x - \gamma)^{m - 2}\wt{f}(x) + 2m(x - \gamma)^{m - 1}\wt{f}'(x) + (x - \gamma)^m\wt{f}''(x).
	\end{align*}
We denote $Q(x) := R(f, p)(x)/(x - \gamma)^{m - 2}$. Plugging $f'(x)$ and $f''(x)$ into $R(f, p)(x)$, we can express $Q(x)$ in terms of $\wt{f}(x)$
\begin{align*}
	Q(x) = \Big[ m(m-1)\wt{f}(x) + 2m(x - \gamma)\wt{f}'(x) + (x - \gamma)^2\wt{f}''(x) \Big] p(x) - (x - \gamma)p'(x) \Big[ m\wt{f}(x) + (x - \gamma) \wt{f}'(x) \Big].
\end{align*}
Next, we express $Q(x)$ in the monomial basis $\{ 1, (x - \gamma), (x - \gamma)^2 \}$,
\begin{align*}
	Q(x) = m (m - 1)\wt{f}(x)p(x) + m(x - \gamma)\Big[ 2\wt{f}'(x)p(x) - \wt{f}(x)p'(x) \Big] +(x - \gamma)^2R(\wt{f}, \; p)(x).
\end{align*}
Explicit substitution shows that $Q(\gamma) = m(m - 1)\wt{f}(\gamma)p(\gamma)$. Since $m \geq 3$, $m(m-1) \ne 0$ and $m(m-2) \ne 0$. Since $\wt{ f } ( \gamma ) \ne 0$, $Q(\gamma) = 0 \iff p(\gamma) = 0$. In addition
\begin{align*}
	Q'(x) = &m(m - 1)\Big[ \wt{f}'(x)p(x) + \wt{f}(x)p'(x) \Big] + m \Big[ 2\wt{f}'(x)p(x) - \wt{f}(x)p'(x) \Big] \\
			&+ m(x - \gamma)\Big[ 2\wt{f}''(x)p(x) + \wt{f}'(x)p'(x) - \wt{f}(x)p''(x) \Big] + 2(x - \gamma) R(\wt{f}, \; p)(x) + (x - \gamma)^2R'(\wt{f}, \; p)(x).
\end{align*}
Evaluating $Q'(x)$ at $x = \gamma$, we obtain that $Q'(\gamma) = m(m + 1)\wt{f}'(\gamma)p(\gamma) + m(m - 2)\wt{f}(\gamma)p'(\gamma)$. So if $Q(\gamma) = Q'(\gamma) = 0$, we have $p(\gamma) = 0$ and $Q'(\gamma) = m(m - 2)\wt{f}(\gamma)p'(\gamma) = 0$. Since $m(m-2) \ne 0$, $\wt{f}(\gamma) \ne 0$ implies $p'(\gamma) = 0$. Conversely, $p(\gamma) = p'(\gamma) = 0$ also implies $Q(\gamma) = Q'(\gamma) = 0$. So we prove $(x - \gamma)^2 \bigr| Q(x) \iff (x - \gamma)^2 \bigr| p(x)$. By construction of $Q(x)$, $(x - \gamma)^m \mid R(f, p)(x) \iff (x - \gamma)^2 \mid Q(x)$. Hence $(x - \gamma)^m$ divides $R(f, p)(x)$ if and only if $(x - \gamma)^2$ divides $p(x)$.
\end{proof}

\begin{corollary} \label{C: n_1=0} If $R_1(f) = \varnothing$ (i.e., $n_1 =0$) then $W(f)  = \{ p(x) \in \CC[x] \mid \deg p \leq n - 2, \text{ and } f_{\beta} f_{\gamma}^2 \text{ divides } p \}$.
In particular $\dim[ W(f) ] = r + 1$. 
\end{corollary}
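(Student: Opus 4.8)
The plan is to read off the description of $W(f)$ directly from Remark~\ref{R: R_1=0} and Theorem~\ref{T: thmMRoot}, and then to identify $W(f)$ with a space of polynomials of bounded degree via multiplication by $f_\beta f_\gamma^2$.

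First, since $n_1 = 0$ we have $R_1(f) = \varnothing$, hence $f_{\alpha}(x) \equiv 1$, so Remark~\ref{R: R_1=0} gives $W(f) = W(f,\beta) \cap W(f,\gamma)$. Next I would invoke Theorem~\ref{T: thmMRoot}: for a polynomial $p$ with $\deg p \leq n-2$, membership in $W(f,\beta)$ is equivalent to $f_\beta(x) \mid p(x)$, and membership in $W(f,\gamma)$ is equivalent to $f_\gamma(x)^2 \mid p(x)$. The roots counted by $f_\beta$ have multiplicity exactly two in $f$, while those counted by $f_\gamma$ have multiplicity at least three; hence $f_\beta$ and $f_\gamma$ share no root and are coprime, so $p$ is divisible by both $f_\beta$ and $f_\gamma^2$ precisely when $f_\beta f_\gamma^2 \mid p$. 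This yields the stated description $W(f) = \{\, p \in \CC[x] : \deg p \leq n-2,\ f_\beta f_\gamma^2 \mid p \,\}$.

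For the dimension I would exhibit the linear isomorphism $P_r \to W(f)$ given by $g(x) \mapsto f_\beta(x) f_\gamma(x)^2 g(x)$. It is injective, its image is all of $W(f)$ by the description just obtained, and since $\deg(f_\beta f_\gamma^2) = n_2 + 2N_3$ we have $\deg\big(f_\beta f_\gamma^2 g\big) \leq n-2 \iff \deg g \leq n - 2 - (n_2 + 2N_3) = r$. Therefore $\dim[W(f)] = \dim P_r = r+1$.

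There is no genuine obstacle: the corollary is a formal consequence of results already established in this section. The only points needing care are the coprimality of $f_\beta$ and $f_\gamma$, which is immediate from the definitions of the sets $R_i(f)$ and $\gamma = \bigcup_{m \geq 3} R_m(f)$, and the degree bookkeeping matching the definition $r = n - 2 - (n_2 + 2N_3)$.
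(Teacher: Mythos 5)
Your proposal is correct and follows exactly the paper's own argument: apply Remark~\ref{R: R_1=0} to reduce to $W(f,\beta)\cap W(f,\gamma)$, invoke Theorem~\ref{T: thmMRoot} with the coprimality of $f_\beta$ and $f_\gamma^2$, and then count dimensions via the map $g \mapsto f_\beta f_\gamma^2 g$. No differences worth noting.
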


\begin{proof} By Theorem~\ref{T: thmMRoot} since $f_{\beta}$ and $f_{\gamma}^2$ are relatively prime,
\[
	p(x) \in W(f, \beta) \cap W(f, \gamma) \iff f_{\beta}(x) \mid p(x) \text{ and } f_{\gamma} ( x )^2 \mid p(x) \iff  f_{\beta}(x)f_{\gamma}^2(x) \text{ divides } p(x).
\]
Let us prove Theorem~\ref{T: mainthm}.\ref{item: equdim} under the case $n_1=0$. By Remark~\ref{R: R_1=0}, if $n_1 = 0$ then
\[
	W(f) = W(f, \beta) \cap W(f, \gamma) = \{ p(x) \in \CC[x] \mid \deg p \leq n - 2, \text{ and } f_{\beta} f_{\gamma}^2 \text{ divides } p \}.
\]
In other words, elements in $W(f)$ are of the form $h(x) f_{\beta} ( x ) f_{\gamma} ( x)^2$ where $h(x)$ is a polynomial of degree at most $n-2 - (n_2 + 2N_3)  =r$. So $\dim [ W(f) ] = r +1$. 
\end{proof}

\begin{rem} \label{rem: maincorn_1=0} \upshape Recall in Theorem~\ref{T: mainthm}, $r = n- 2 - (n_2 + 2 N_3)$. If we write $n$ as $n_1 + 2n_2 + k_1 + \dots + k_{N_3}$, then we can rewrite $r$ as
\[
	\textstyle r =  n_1 + (n_2 - 2) + \sum_{s = 1}^{N_3} (k_s - 2). \tag{\ref{subsec: n_1=0=>dim=mu}.4} \label{equ: r}
\]
If $f(x)$ is divisible by the square of a quadratic polynomial, we have either $n_2 \geq 2$ or $N_3 \geq 1$ together with some $k_s \geq 4$. By (\ref{equ: r}), $r \geq n_1$. So $\dim [ W(f) ] = r + 1 \geq 1 \Rarr W(f)$ is nonvanishing. So we prove Corollary~\ref{C: maincor} when $n_1 =0$. 
\end{rem}

\subsection{Proof of Lemma~\ref{L: importL}} \label{subsec: reform} 

Recall by (\ref{equ: W(-,f)}), $W(f) = W(f, \alpha) \cap W(f, \beta) \cap W(f, \gamma)$. Also, Theorem~\ref{T: thmMRoot} says $p(x) \in W(f, \beta) \cap W(f, \gamma)$ if and only if $p_{\alpha}(x) = p(x) / [f_{\beta}(x) f_{\gamma}^2(x)]$ is a polynomial. From these remarks, Lemma~\ref{L: importL} reduces to the following claim.

\begin{claim*}  \label{L: equiv} Let $p(x)$ be a polynomial of degree at most $n-2$. Then $p(x)  \in W(f, \alpha)$ if and only if $d(x)p_{\alpha}(x) - p_{\alpha}'(x)$ vanishes at all simple roots of $f$. 
\end{claim*}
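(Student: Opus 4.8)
The plan is to convert the divisibility $f_\alpha(x)\mid R(f,p)(x)$ into a finite list of scalar conditions and then verify each one by a short logarithmic-derivative computation. Since $f_\alpha(x)=\prod_{\alpha_i\in\alpha}(x-\alpha_i)$ is a product of \emph{pairwise distinct} linear factors, we have $f_\alpha(x)\mid R(f,p)(x)$ if and only if $R(f,p)(\alpha_i)=0$ for every simple root $\alpha_i$ of $f$. (If $n_1=0$ there is nothing to prove: then $f_\alpha\equiv 1$, $W(f,\alpha)$ is all of $P_{n-2}$, and the vanishing condition is vacuous; so assume $n_1\geq 1$.) Thus the Claim reduces to the following local statement, to be checked for each $\alpha_i$ in turn:
\[
  R(f,p)(\alpha_i)=0 \iff \bigl(d\,p_\alpha-p_\alpha'\bigr)(\alpha_i)=0 .
\]
Only first-order vanishing at a simple root is involved here, in contrast with the order-two conditions at double and higher roots treated in Lemma~\ref{L: lemMRoot}, so the computation is light.

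Fix a simple root $\alpha_i$ and use the factorizations $f=f_\alpha\,h$ with $h:=f_\beta^2\,\wt{f}_\gamma=f_\beta(x)^2\prod_{s}(x-\gamma_s)^{k_s}$ (so $h=f/f_\alpha$, cf.\ \S\ref{subsec: n_1=0=>dim=mu}), and $p=K\,p_\alpha$ with $K:=f_\beta f_\gamma^2$. A priori $p_\alpha$ is only a rational function, but $p_\alpha$ and $h$ are regular at $\alpha_i$ and $h(\alpha_i)\ne 0$, $K(\alpha_i)\ne 0$, $f_\alpha'(\alpha_i)\ne 0$, because $\alpha_i$ is a \emph{simple} root and hence lies in neither $\beta$, $\gamma$, nor the zero set of $f_\alpha'$. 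Differentiating the two products by the Leibniz rule and using $f_\alpha(\alpha_i)=0$ gives (all terms below evaluated at $\alpha_i$) $f'=f_\alpha'h$, $f''=f_\alpha''h+2f_\alpha'h'$, $p=K\,p_\alpha$ and $p'=K'p_\alpha+Kp_\alpha'$. Substituting into $R(f,p)(\alpha_i)=f''(\alpha_i)p(\alpha_i)-f'(\alpha_i)p'(\alpha_i)$ and pulling out the nonzero factor $f_\alpha'(\alpha_i)h(\alpha_i)K(\alpha_i)$ yields
\[
  R(f,p)(\alpha_i)=f_\alpha'(\alpha_i)\,h(\alpha_i)\,K(\alpha_i)\Bigl[\,D(\alpha_i)\,p_\alpha(\alpha_i)-p_\alpha'(\alpha_i)\,\Bigr],
  \qquad D:=\frac{f_\alpha''}{f_\alpha'}+2\frac{h'}{h}-\frac{K'}{K}.
\]

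It then remains to recognize that $D=d$ as rational functions. Expanding logarithmic derivatives, $\dfrac{h'}{h}=2\dfrac{f_\beta'}{f_\beta}+\sum_{s}\dfrac{k_s}{x-\gamma_s}$ and $\dfrac{K'}{K}=\dfrac{f_\beta'}{f_\beta}+\sum_{s}\dfrac{2}{x-\gamma_s}$, so $D=\dfrac{f_\alpha''}{f_\alpha'}+3\dfrac{f_\beta'}{f_\beta}+\sum_{s}\dfrac{2(k_s-1)}{x-\gamma_s}$; since $\dfrac{f_\beta'}{f_\beta}=\sum_{i=1}^{n_2}\dfrac{1}{x-\beta_i}$, this is exactly the partial-fraction formula \eqref{equ: d_f} for $d(x)$. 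As $f_\alpha'(\alpha_i)h(\alpha_i)K(\alpha_i)\ne 0$, the displayed identity gives $R(f,p)(\alpha_i)=0\iff(d\,p_\alpha-p_\alpha')(\alpha_i)=0$; ranging over all simple roots proves the Claim, and combined with Theorem~\ref{T: thmMRoot} this establishes Lemma~\ref{L: importL}. I do not anticipate a genuine obstacle: the argument is one local computation. The only point requiring care is the bookkeeping of multiplicities in the identity $D=d$ — the coefficient $3$ on $f_\beta'/f_\beta$ and the coefficient $2(k_s-1)$ at $\gamma_s$ come from the interplay between $f_\beta$ appearing squared inside $f$ and $f_\gamma$ appearing squared inside the divisor $f_\beta f_\gamma^2$ of $p$.
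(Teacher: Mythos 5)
Your proof is correct and takes essentially the same approach as the paper: you reduce $R(f,p)(\alpha_i)$ modulo $f_\alpha$, pull out the nonzero factor $f_\alpha'(\alpha_i)h(\alpha_i)K(\alpha_i)$ (which is exactly the paper's $G(\alpha_i)=f_\alpha'f_\beta^3 p_\gamma\wt{f}_\gamma$ evaluated at $\alpha_i$, since $h=f_\beta^2\wt{f}_\gamma$ and $K=f_\beta f_\gamma^2$), and verify $D=d$ via logarithmic derivatives, matching the paper's formula \eqref{equ: d_fReform}. The only cosmetic difference is that you keep the factor $h=f/f_\alpha$ bundled rather than splitting it into $\wt{f}_\beta$ and $\wt{f}_\gamma$ as the paper does.
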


\subsection*{Proof of Claim} Recall from (\ref{subsec: n_1=0=>dim=mu}.2) that $\wt{ f }_{\gamma}(x) = f(x) / [ f_{\alpha} ( x ) f_{\beta} (x)^2 ] =  \prod_{i = 1}^{N_3} (x - \gamma_i)^{k_i}.$ If $g(x) = \prod_{i=1}^n (x - \omega_i)$ then
\[
	 \frac{ g'(x) }{ g(x) } = \sum_{i = 1}^n \frac{ 1 }{ x - \omega_i } \text{ and } \frac{ g''( \omega_i ) }{ g'( \omega_i ) } = \sum_{j \ne i} \frac{ 2 }{ \omega_i - \omega_j }. \tag{\ref{subsec: reform}.1} \label{equ: polyalg}
\]
Using these facts, we can rewrite $d(x)$ from (\ref{equ: d_f}) as the following:
\[
	d(x) =  \frac{f''_{\alpha}(x)}{f'_{\alpha}(x)} + 3 \frac{f'_{\beta}(x)}{f_{\beta}(x)} + 2\frac{ \wt{f}_{\gamma} '(x)}{\wt{f} _{\gamma}(x)} - 2 \frac{f'_{\gamma}(x)}{f_{\gamma}(x)}. \tag{\ref{subsec: reform}.2}  \label{equ: d_fReform}
\]
We set $\wt{f}_{\beta} = f_{\beta}^2$, $p_{\gamma} = f_{\gamma}^2$ and rewrite $f$, $p$ as $f = f_{\alpha} \cdot \wt{ f} _{\beta}    \cdot  \wt{ f} _{\gamma}$, $p = p_{\alpha} \cdot f_{\beta} \cdot p_{\gamma}$. It follows that
\begin{align*}
	p' &= p_{\alpha}' f_{\beta} p_{\gamma} + p_{\alpha} f_{\beta}' p_{\gamma} + p_{\alpha} f_{\beta} p_{\gamma}',	\; f' = f_{\alpha}' \wt{ f}_{\beta} \wt{ f}_{\gamma} + f_{\alpha} ( \wt{ f}_{\beta}' \wt{ f}_{\gamma} + \wt{ f}_{\beta} \wt{ f}_{\gamma}'), 	 \\
	f'' &= f_{\alpha}'' \wt{ f}_{\beta} \wt{ f}_{\gamma} + 2f_{\alpha}' ( \wt{ f}_{\beta}' \wt{ f}_{\gamma} + \wt{ f}_{\beta} \wt{ f}_{\gamma}' ) + f_{\alpha} (\wt{ f}_{\beta}'' \wt{ f}_{\gamma} + \wt{ f}_{\beta} \wt{ f}_{\gamma}'').
\end{align*}
Let $F$ be the polynomial $R(f, p)  - f_{\alpha} [ p (\wt{ f}_{\beta}'' \wt{ f}_{\gamma} + \wt{ f}_{\beta} \wt{ f}_{\gamma}'') - ( \wt{ f}_{\beta}' \wt{ f}_{\gamma} + \wt{ f}_{\beta} \wt{ f}_{\gamma}') p' ]$. While evaluating $R(f, p)$ at $x = \alpha_i$, we may disregard all terms that are divisible by $f_{\alpha}(x)$. So, $F$ and $R(f, p)$ vanish at all $x = \alpha_i$ simutaneously. Plugging in $\wt{ f}_{\beta} = f_{\beta}^2, \wt{ f}_{\beta}' = 2 f_{\beta} f_{\beta}'$ into $F(x)$, we obtain 
\begin{align*}
	F &= \Big[ f_{\alpha}'' f_{\beta}^2 \wt{ f}_{\gamma} + 2 f_{\alpha}' ( 2 f_{\beta} f_{\beta}' \wt{ f}_{\gamma} + f_{\beta}^2 \wt{ f}_{\gamma}' ) \Big] p_{\alpha} f_{\beta} p_{\gamma} 
					- f_{\alpha}' f_{\beta}^2 \wt{ f}_{\gamma} \Big[ p_{\alpha}' f_{\beta} p_{\gamma} + p_{\alpha} f_{\beta}' p_{\gamma} + p_{\alpha} f_{\beta} p_{\gamma}' \Big].   \tag{\ref{L: equiv}.3} \label{equ: Freform}
\end{align*}
Let $G(x)$ be the polynomial $f_{\alpha}'(x) f_{\beta}^3(x) p_{\gamma}(x) \wt{ f}_{\gamma}(x)$. From (\ref{equ: d_fReform}), $F/ G = d (x) p_{\alpha} (x) - p_{\alpha}' (x)$ when we divide $G$ on both sides of (\ref{equ: Freform}). Note $G( \alpha_i ) \ne 0$ for all $1 \leq i \leq n_1$. As rational functions, it is equivalent to say $F$ or $F / G$ vanish at all $\alpha_i$. In other words, $R(f, p)( \alpha_i ) = 0$ if and only if $d(\alpha_i) p_{\alpha} ( \alpha_i) - p'_{\alpha} ( \alpha_i ) = 0$ for all $1 \leq i \leq n_1$. 


\section{Basic properties of $Z( \eta, \omega ; s , k)$} \label{sec: HSpace}

\begin{proposition} \label{P: natEmbed} Let $\eta, \omega$ be points in $\CC^s$ with $\omega_i \ne \omega_j$ for all $i \ne j$ and assume $s' \leq s, k' \leq k$. If $\eta' = (\eta_1, \dots, \eta_{s'}), \omega' = (\omega_1, \dots, \omega_{s'} )$ are points in $\CC^{s'}$ then
\begin{enumerate}
\item If $k'' \geq k$, then $\dim [ Z(\eta, \omega; s, k'') ] \leq \dim [Z( \eta, \omega ; s , k) ] + (k'' - k )$;
\item There is a chain of vector space embeddings $Z(\eta, \omega; s, k') \hrarr  Z( \eta, \omega ; s , k)  \hrarr  Z(\eta', \omega'; s', k)$. Let $i_{k'k}, i_{ss'}$ be the first and second map respectively. They are both natural inclusions.
\end{enumerate}
\end{proposition}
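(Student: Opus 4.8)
The plan is to treat both parts as elementary linear algebra, exploiting that each $Z(\eta,\omega;s,k)$ is, by definition (\ref{stat: defnZ}), the subspace of the finite-dimensional space $P_k$ carved out by the $s$ linear conditions $p'(\omega_i)=\eta_i\,p(\omega_i)$. Part (2) will be essentially a tautology once the definitions are unwound, and part (1) will follow from rank--nullity applied to a well-chosen quotient map.

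For part (2), I would first observe that the $\omega'_i$ are still pairwise distinct (they are among the $\omega_i$), so $Z(\eta',\omega';s',k)$ is defined. Then I would check the two set-theoretic inclusions. For $i_{k'k}$: if $p$ satisfies $\deg p\le k'$ it satisfies $\deg p\le k$, and the interpolation conditions indexed by $1\le i\le s$ are literally the same in the two definitions, so $Z(\eta,\omega;s,k')\subseteq Z(\eta,\omega;s,k)$. For $i_{ss'}$: the conditions defining $Z(\eta',\omega';s',k)$ are the conditions indexed by $1\le i\le s'$, a sub-collection of those defining $Z(\eta,\omega;s,k)$, and the degree bound is the same, so $Z(\eta,\omega;s,k)\subseteq Z(\eta',\omega';s',k)$. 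In each case the embedding is simply the subset inclusion, which is automatically linear and injective; composing yields the asserted chain.

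For part (1), assume $k''\ge k$ and consider the quotient map $\pi\colon P_{k''}\to P_{k''}/P_k$, whose target has dimension $k''-k$. Restricting $\pi$ to $Z(\eta,\omega;s,k'')$, the kernel is $Z(\eta,\omega;s,k'')\cap P_k$, which is exactly $Z(\eta,\omega;s,k)$ because the constraints $p'(\omega_i)=\eta_i p(\omega_i)$ do not involve the ambient degree bound and $k\le k''$. Rank--nullity then gives
\[
\dim[Z(\eta,\omega;s,k'')]=\dim[Z(\eta,\omega;s,k)]+\dim\bigl[\pi\bigl(Z(\eta,\omega;s,k'')\bigr)\bigr]\le \dim[Z(\eta,\omega;s,k)]+(k''-k).
\]
(An equivalent route is induction on $k''-k$: evaluate the functional $p\mapsto[x^{k+1}]p$ on $Z(\eta,\omega;s,k+1)$, note its kernel is $Z(\eta,\omega;s,k)$ and its image has dimension $\le 1$, and iterate.)

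I do not expect a genuine obstacle; the only care needed is bookkeeping --- confirming that enlarging the degree bound, or dropping the last $s-s'$ interpolation nodes, only \emph{weakens} the defining constraints, so that every claimed inclusion of sets really holds and the kernel in part (1) is identified correctly.
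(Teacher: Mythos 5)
Your proposal is correct and follows essentially the same elementary route as the paper: part (2) is the tautological observation that weakening the degree bound or dropping interpolation nodes only enlarges the solution set, and part (1) is the rank--nullity bookkeeping that the paper phrases as a one-step induction (the dimension jumps by at most one when $k$ increases to $k+1$), which is the parenthetical alternative you yourself note.
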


\begin{proof} For part (1), in $Z = Z(\eta, \omega, s, k)$ if we replace $k$ by $k+1$, then the dimension of the resulting space $\wt{Z} = Z( \eta, \omega, s, k+1)$  would jump up at most one. Namely, if $Z$ and $\wt{ Z }$ do not coincide then $\wt{Z}$ is generated by $Z$ and a certain polynomial of degree $k+1$. 

For part (2), one may easily check that $Z(\eta, \omega; s, k')$ is a subspace of $Z( \eta, \omega ; s , k)$ and $Z( \eta, \omega ; s , k)$ is a subspace of $ Z(\eta', \omega'; s', k')$. We denote the corresponding inclusion maps by $i_{k'k}$ and $i_{ss'}$ respectively. 
\end{proof}

We will use the following obvious assertion. 

\begin{proposition} \label{P: ax+b} For $a, b \in \CC$ constants with $a \ne 0$, the map $\phi_{a, b}:  Z( \eta, \omega ; s , k) \rarr Z(\eta', \omega'; s, k)$ defined by $\phi_{a, b} (p(x) ) = p( a^{-1} (x - b) )$ is an invertible linear map where $\eta' =  a^{-1} \eta, \omega' = a \omega + b$.
\end{proposition}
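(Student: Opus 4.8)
The plan is to unpack the phrase ``invertible linear map'' into three routine verifications: that $\phi_{a,b}$ actually lands in $Z(\eta',\omega';s,k)$, that it is $\CC$-linear, and that it has a two-sided inverse of the same shape.

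First I would check well-definedness. Fix $p(x)\in Z(\eta,\omega;s,k)$ and set $q(x):=\phi_{a,b}(p(x))=p\big(a^{-1}(x-b)\big)$. Precomposition with the degree-one polynomial $a^{-1}(x-b)$ does not raise degree, so $\deg q=\deg p\le k$, and since $a\ne 0$ the transformed nodes $\omega_i'=a\omega_i+b$ are still pairwise distinct, as $\omega_i'-\omega_j'=a(\omega_i-\omega_j)$. The chain rule gives $q'(x)=a^{-1}p'\big(a^{-1}(x-b)\big)$, and at $x=\omega_i'$ one has $a^{-1}(\omega_i'-b)=\omega_i$, whence $q(\omega_i')=p(\omega_i)$ and $q'(\omega_i')=a^{-1}p'(\omega_i)$. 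Feeding in the interpolation condition $p'(\omega_i)=\eta_i\,p(\omega_i)$ yields $q'(\omega_i')=a^{-1}\eta_i\,p(\omega_i)=\eta_i'\,q(\omega_i')$, so indeed $q\in Z(\eta',\omega';s,k)$. Linearity is then immediate, since evaluating $p_1+\lambda p_2$ at $a^{-1}(x-b)$ equals $\phi_{a,b}(p_1(x))+\lambda\,\phi_{a,b}(p_2(x))$ by the definition of addition and scaling of polynomials.

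For invertibility I would exhibit the inverse explicitly as $\phi_{a^{-1},\,-a^{-1}b}$. Running the well-definedness argument above with $(a,b)$ replaced by $(a^{-1},-a^{-1}b)$ — and noting that the transformed data attached to $(\eta',\omega')$ are then $\big(a\eta'\,,\ a^{-1}\omega'-a^{-1}b\big)=(\eta,\omega)$ — shows that this map carries $Z(\eta',\omega';s,k)$ back into $Z(\eta,\omega;s,k)$. A one-line substitution check, namely $\phi_{a^{-1},-a^{-1}b}\big(\phi_{a,b}(p)\big)(x)=\phi_{a,b}(p)(ax+b)=p(x)$ and symmetrically the reverse composite, confirms that the two compositions are the identity.

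Nothing here is genuinely hard; the only thing to watch is bookkeeping — keeping the substituted data $\eta'=a^{-1}\eta$, $\omega'=a\omega+b$ consistent, especially when composing $\phi_{a,b}$ with $\phi_{a^{-1},-a^{-1}b}$. An alternative that avoids even this is to record once and for all the composition law $\phi_{a',b'}\circ\phi_{a,b}=\phi_{a'a,\,a'b+b'}$ and read off invertibility by solving $a'a=1$, $a'b+b'=0$; I would mention this but the explicit-inverse route is shorter.
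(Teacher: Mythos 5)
Your proof is correct and complete. The paper itself gives no argument for this proposition — it is introduced with the phrase ``We will use the following obvious assertion'' and left to the reader — so there is nothing in the source to compare against; your three-step verification (well-definedness via the chain rule and the identity $a^{-1}(\omega_i'-b)=\omega_i$, linearity, and the explicit inverse $\phi_{a^{-1},-a^{-1}b}$) is exactly the routine check the author is implicitly invoking. The only small thing worth double-checking, and you did it, is that the inverse map's transformed data really are $(\eta,\omega)$: with $(a'',b'')=(a^{-1},-a^{-1}b)$ one gets $(a'')^{-1}\eta'=a\cdot a^{-1}\eta=\eta$ and $a''\omega'+b''=a^{-1}(a\omega+b)-a^{-1}b=\omega$, so $\phi_{a^{-1},-a^{-1}b}$ does land back in $Z(\eta,\omega;s,k)$. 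Your closing remark about the composition law $\phi_{a',b'}\circ\phi_{a,b}=\phi_{a'a,\,a'b+b'}$ is a clean way to organize the bookkeeping and would have been equally acceptable.
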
 

\begin{theorem}[Bounds] \label{T: bds4Z} If $k \geq s - 1$ then $k + 1 -s \leq \dim [ Z( \eta, \omega ; s , k) ] \leq k$.
\end{theorem}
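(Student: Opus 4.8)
The plan is to realize $Z(\eta,\omega;s,k)$ as the kernel of an explicit linear map from $P_k$ to $\CC^s$ and read off both bounds from rank considerations. Concretely, define $L \colon P_k \rarr \CC^s$ by $L(p) = \bigl( p'(\omega_1) - \eta_1 p(\omega_1), \dots, p'(\omega_s) - \eta_s p(\omega_s) \bigr)$. Then $L$ is linear and $Z(\eta,\omega;s,k) = \ker L$ by definition (\ref{stat: defnZ}). Since $\dim P_k = k+1$, the rank--nullity theorem gives $\dim[Z(\eta,\omega;s,k)] = (k+1) - \rank L$. The upper bound $\dim[Z(\eta,\omega;s,k)] \leq k$ is immediate once we know $\rank L \geq 1$, i.e.\ that $L$ is not identically zero; and the lower bound $\dim[Z(\eta,\omega;s,k)] \geq k+1-s$ is immediate because $\rank L \leq s$ (the target has dimension $s$).

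For the upper bound, I would exhibit a single polynomial $p \in P_k$ with $L(p) \ne 0$. The simplest choice is a constant: for $p \equiv 1$ we get $L(1) = (-\eta_1, \dots, -\eta_s)$, which is nonzero provided some $\eta_i \ne 0$. To handle the case where all $\eta_i = 0$, note that then the condition defining $Z$ becomes $p'(\omega_i) = 0$ for all $i$; taking $p(x) = x$ gives $L(x) = (1,\dots,1) \ne 0$ since $s \geq 1$ (recall $s$ is a positive integer by the standing hypothesis preceding (\ref{stat: defnZ})). Either way $\rank L \geq 1$, so $\dim[Z(\eta,\omega;s,k)] \leq k$. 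An alternative, cleaner phrasing avoiding the case split: the two vectors $L(1) = (-\eta_i)_i$ and $L(x) = (1 - \eta_i\omega_i)_i$ cannot both be zero, since their difference has a nonzero coordinate; hence $\rank L \geq 1$.

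The lower bound needs nothing beyond $\dim \ker L \geq \dim P_k - \dim(\text{codomain}) = (k+1) - s$, which is exactly the asserted bound, and which is vacuous-but-true precisely when $k \geq s-1$ (the hypothesis ensures $k+1-s \geq 0$, so the stated inequality is meaningful as a statement about a nonnegative lower bound). I do not expect any genuine obstacle here: the only subtlety is the degenerate case in the upper-bound argument when every $\eta_i$ vanishes, which the remark above dispatches. One could alternatively deduce both bounds from Proposition~\ref{P: natEmbed}: part~(2) gives an inclusion $Z(\eta,\omega;s,k) \hrarr Z(\eta',\omega';1,k)$ for the truncations to a single node, and a direct computation shows $\dim Z(\eta_1,\omega_1;1,k) = k$ (one linear condition on $P_k$ that is always nontrivial), yielding the upper bound; and part~(1) applied repeatedly, starting from $Z(\eta,\omega;s,s-1)$, together with the trivial bound $\dim \geq 0$ in degree $s-1$, propagates to the lower bound for larger $k$. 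I would present the kernel-of-$L$ argument as the main proof since it gives both inequalities simultaneously and transparently.
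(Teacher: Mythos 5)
Your proposal is correct and is essentially the same argument as the paper's: the paper also realizes $Z(\eta,\omega;s,k)$ as the kernel of the same linear map (presented via its $s \times (k+1)$ matrix $A$ in the monomial basis) and applies rank--nullity, establishing $1 \leq \rank \leq s$. The only cosmetic difference is how nonzero rank is seen: you evaluate $L$ at $1$ and $x$, while the paper performs a column operation on $A$ to produce a $-1$ entry; both simply certify that the map is not identically zero.
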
 

\begin{proof} Let $p(x) \in Z( \eta, \omega ; s , k)$, we can write $p(x) = a_k x^k + \dots + a_1x + a_0$. From (\ref{stat: defnZ}), the defining condition $p ' ( \omega_i ) = \eta_i p( \omega_i )$ is equivalent to $k a_k { \omega_i }^{k-1} + \dots + a_1 = a_k \omega_i^k + \dots + a_0$ for all $1 \leq i \leq s$. By treating $(a_0, \dots, a_k) \in \CC^{k+1}$ as variables, we can regard this as a homogeneous $s \times (k+1)$ linear system. So, we can write down the corresponding matrix $A$ with respect to the monomial basis $\{ 1, x, \dots, x^k \}$.
\[
	A = 
	\begin{pmatrix}
	\eta_1 &  \omega_1 \eta_1 - 1 & \dots & \omega_1^k \eta_1 - k \omega_1^{k -  1} \\
	\eta_2 & \omega_2 \eta_2  - 1 & \dots & \omega_2^k \eta_2 - k \omega_1^{k - 1} \\
	\vdots & \vdots  & \ddots &\vdots\\
	\eta_{s} & \omega_{s} \eta_{s} - 1 & \dots & \omega_{s}^k \eta_{s} - k \omega_{s}^{k - 1} \\
	\end{pmatrix}
	\tag{\ref{T: bds4Z}.1} \label{equ: assomatrix}
\]
Since $s \leq k + 1$, the number of columns in $A$ is always greater than or equal to the number of rows in $A$. Since $\dim [ Z( \eta, \omega ; s , k)] $ is the same as the corank of $A$, $k +1 =  \# \text{ columns of } A  = \rank A + \dim [  Z( \eta, \omega ; s , k) ]$. So it suffices to check $1 \leq \rank A \leq s$. First note that $\rank A \leq \min \{ \# \text{ columns of } A, \# \text{ rows of } A \} = \min \{ k + 1, s \} =  s$. On the other hand, by multiplying the first column of $A$ by $- \omega_1$ and adding it to the second column, the first entry of the resulting column becomes $-1 \ne 0$. Hence $\rank A \geq 1$. 
\end{proof}

By Remark~\ref{rem: maincorn_1=0}, Corollary~\ref{C: maincor}, and \cite[Theorem~1.5.(ii)]{Zar12},  if $W(f)$ is nonvanishing then
\[
	\textstyle r = n_1 + (n_2 -2 ) + \sum_{i=1}^{N_3} (k_i -2) \geq n_1.
\]
Applying Theorem~\ref{T: bds4Z} to $Z( \delta, \alpha, n_1, r ) \isom W(f)$, we get $\mu \leq \dim [ W(f) ] \leq r$. This verifies Theorem~\ref{T: mainthm}.\ref{item: bdsdim}. The matrix $A$ defined in (\ref{equ: assomatrix}) play an important role when we compute $\dim [ W(f) ]$. From now on, we call $A$, the \emph{\textbf{associated matrix}} attached to $Z( \eta, \omega, s, k)$. At first glance, $A$ shares similarity with the Vandermonde matrix, so we expect $A$ attains full rank under certain mild conditions. Here are some examples.

\begin{example} \label{E: eta=0} \upshape Let $\eta =(0, \dots, 0) \in \CC^s$, and $\widetilde{V}(\omega)$ be the matrix truncated from the second to the $(s + 1)$th columns in the associated matrix of $Z( 0, \omega, k, s )$. It's not hard to see that $\wt{V}(\omega)$ is obtained from the Vandermonde matrix $V( \omega)$ by multiplying $-j$ on the $j$th column for each $1 \leq j \leq s$. Therefore
\begin{align*}
	 \det \wt{V}(\omega) &=  \textstyle  s!(-1)^s\det V( \omega) = s!(-1)^s v_n(\omega) = s! (-1)^s \prod_{1 \leq i < j \leq s} ( \omega_j - \omega_i) \ne 0.
\end{align*}
where $v_n = \prod_{1 \leq i < j \leq n} (x_j - x_i)$ is the Vandermonde polynomial. Hence $\rank ( \widetilde{V}(\omega) )= s \Lrarr \rank A = s$. So $\dim Z( \eta, \omega ; s , k) = k + 1 - \rank A = k + 1 -s$. 
\end{example} 

\begin{example} \label{E: s=2} \upshape 
We use a brute-force calculation to check if $k \geq 3$, $\dim [ Z(\eta, \omega; 2, k) ]  =  k - 1 = \mu$. Since $k \geq 3$, the associated matrix $A$ has at least four columns. Our plan is to prove by contradiction. Suppose to the contrary, then Remark~\ref{rem: equiv} says $A$ does not have full rank. Let $A_1, A_2$ be the first and second row of $A$ respectively. Since $A$ is a $2 \times (k + 1)$ matrix, $A_1 = c A_2$ for some nonzero $c \in \CC$. By (\ref{equ: assomatrix}), 
\[
	 (\eta_1, \eta_1 \omega_1 - 1, \eta_1 \omega_1^2 - 2 \omega_1, \eta_1 \omega_1^3 - 3 \omega_1^2, \dots ) = c (\eta_2, \eta_2 \omega_2 - 1, \eta_2 \omega_2^2 - 2 \omega_2, \eta_2 \omega_2^3 - 3 \omega_2^2, \dots ).
\]
Equating the first entry from the above expression, we obtain $\eta_1 = c \eta_2$. Substituting $\eta_1 = c \eta_2$ in the second through fourth entries of the equation $A_1  = cA_2$, and we have
\begin{align}
	c\eta_2(\omega_1 - \omega_2) &= 1 - c, \tag{\ref{E: s=2}.1} \label{equ: 1-c} \\
	c\eta_2(\omega_1^2 - \omega_2^2) &= 2\omega_1 - 2c \omega_2, \tag{\ref{E: s=2}.2} \label{equ: s=2Eta} \\
	c\eta_2(\omega_1^3 - \omega_2^3) &= 3\omega_1^2 - 3c \omega_2^2.  \tag{\ref{E: s=2}.3}
\end{align}
We continue to show that (\ref{equ: 1-c}) and (\ref{equ: s=2Eta}) implies 
\[
	c = -1, \; \eta_1 + \eta_2 = 0, \text{ and } \eta_2(\omega_2 - \omega_1) = 2. \tag{\ref{E: s=2}.4} \label{stat: s=2}
\]
Let $R_2( \eta, \omega)$ be the right-hand side of $(\ref{equ: s=2Eta})$. Plugging $1-c$ from (\ref{equ: 1-c}) into $R_2(\eta, \omega)$, we obtain 
\begin{align*}
		R_2( \eta, \omega ) &= 2\omega_1 - 2c \omega_2 + (2 \omega_2 - 2 \omega_2) =  2(\omega_1 - \omega_2) + 2\omega_2(1 - c) = (\omega_1 - \omega_2)(2 + 2c \eta_2 \omega_2).
\end{align*}
Dividing $\omega_1 - \omega_2$ on both sides of (\ref{equ: s=2Eta}), we obtain $c\eta_2(\omega_1 + \omega_2) = 2 + 2c\eta_2\omega_2 \Lrarr c\eta_2(\omega_1 - \omega_2) = 2$. Combining this with (\ref{equ: 1-c}), we can say $1 - c = c \eta_2(\omega_1 - \omega_2) = 2 \Rarr c = -1$. In short, we obtain $\eta_2(\omega_2 - \omega_1) = 2$ by substituting $\eta_1 = c \eta_2 = - \eta_2$ in (\ref{equ: 1-c})

Let $L_2( \eta, \omega)$ be the left-hand side of (\ref{E: s=2}.3). Using (\ref{E: s=2}.4), we obtain 
\[
	L_2( \eta, \omega ) = -\eta_2(\omega_1 - \omega_2)(\omega_1^2 + \omega_1\omega_2 + \omega_2^2) = 2(\omega_1^2 + \omega_1\omega_2 + \omega_2^2).
\]
Moving $L_2( \eta , \omega)$ to the right-hand side, we obtain $\omega_1^2 + \omega_2^2 - 2\omega_1\omega_2 = 0$.  In other words, $\omega_1  = \omega_2$, which contradicts our setting in (\ref{stat: defnZ}). This example will serve as the base case for an induction argument in Theorem~\ref{T: dimk>2s-2}.
\end{example}

We mainly focus on $Z(\eta, \omega, s, k )$ whose associated matrix attains full rank. In fact, our proof of $\dim [ W(f) ] = \mu$ is essentially a verification of whether the associated matrix of $W(f) \isom Z( \delta, \alpha , n_1, r)$ is full rank. So we call $Z( \eta, \omega, s , k )$ \emph{\textbf{non-degenerate}} if its associated matrix (\ref{equ: assomatrix}) is of full rank. We also say $W(f)$ is \emph{\textbf{non-degenerate}} if its isomorphic image $Z( \delta, \alpha, n_1, r )$ is non-degenerate.

\begin{rem} \upshape \label{rem: equiv} $W(f) = Z( \delta, \alpha, n_1, r)$ is non-degenerate if and only if $\dim [ W(f) ] = \mu \iff \dim [Z(n_1, r) ] = r + 1 - n_1$, or the associated matrix $A$ of $Z(\delta, \alpha ; n_1, r)$ has full rank.
\end{rem}

\begin{example} \label{E: k=s=2} \upshape It is possible to construct degenerate $Z( \eta, \omega, s, k)$. Let $\eta = \omega = (1, -1) \in \CC^2$, we show $\dim [Z(\eta, \omega; 2, 2) ] = 2$. In this case, $k = s = 2$ and the associated matrix $A$ of $Z(\eta, \omega; 2, 2)$ has size $2 \times 3$. 
\[
	A =
	\begin{pmatrix}
		\eta_1 & \omega_1 \eta_1 - 1 & \omega_1( \omega_1 \eta_1 - 2) \\
		\eta_2 & \omega_2 \eta_2 - 1 & \omega_2( \omega_2 \eta_2 - 2)
	\end{pmatrix}
	\xRightarrow[\eta_2 = \omega_2 = -1]{\text{put } \eta_1 = \omega_1 = 1}
	A = 
	\begin{pmatrix}
	1 & 0 & -1\\
	-1 & 0 & 1
	\end{pmatrix}
	\sim
	\begin{pmatrix}
		1 & 0 & -1\\
		0 & 0 & 0
	\end{pmatrix}
	\Lrarr \rank A = 1 < 2. 	
\]
Since $A$ does not attain full rank, $Z(\eta, \omega, 2, 2)$ is degenerate by Remark~\ref{rem: equiv}.
\end{example}

\section{Zarhin's idea} \label{sec: ZarIdea}


Zarhin suggests that the Chinese reminder theorem (CRT) can be used to show nondegeneracy of $W(f)$.  Following his idea, we prove a corollary of Theorem~\ref{T: mainthm}.\ref{item: equdim} that 
\[
	\text{ If } r  \geq 2n_1 -1 , \text{ then } \dim [ W(f) ] = \mu.  \tag{\ref{sec: ZarIdea}.1} \label{stat: r>2n_1-1}
\]
In the second part, we generalize Zarhin's idea to claim 
\[
	\text{ If } k \geq 2s -1, \text{ then } \dim [ Z( \eta, \omega, s, k )] = k + 1 -s.  \tag{\ref{sec: ZarIdea}.2} \label{stat: k>2s-1}
\]
using Hermit interpolation. Recall Lemma~\ref{L: importL} says $W(f) \isom Z(\delta, \alpha ; n_1, r)$. So, (\ref{stat: r>2n_1-1}) is an instance of (\ref{stat: k>2s-1}) if we consider the substitution $( \delta, \alpha, n_1 , r ) \Rarr ( \eta, \omega, s, k)$. We assume $r \geq 2n_1 -1$ throughout this section. By (\ref{equ: r}),  we know that $n_2 + (k_1-2) + \dots + (k_{N_3} -2)  \geq n_1 + 1$. 


\subsection{Applications of CRT}  \label{subsec: CRT} To begin with, we denote $R = \CC[x]$, $I = \langle p(x) \rangle$ the ideal in $R$ generated by polynomial $p(x)$, and define an auxiliary polynomial $A_f(x) := f_{\alpha} (x) f_{\beta} (x) f_{\gamma}^2(x)$. Also we write $I_r = \langle x -  r \rangle$ for each $r \in R(f)$. As a result, we can define a quotient space corresponding to $A_f$, 
\[
	\textstyle V(f) := \oplus_{i = 1}^{n_1} (R / I_{\alpha_i} ) \oplus_{j = 1}^{n_2} (R / I_{\beta_j} )  \oplus_{l = 1}^{N_3}  (R / I_{\gamma_l}^2).
\]
Since ideals $I_{\alpha_i}, I_{\beta_j}, I_{\gamma_l}$ are coprime in $R$, we can apply CRT to say that 
\[
	V(f) \isom R / \langle f_{\alpha} \rangle \oplus R / \langle f_{\beta} \rangle \oplus R / \langle f_{\gamma} \rangle^2 \isom R/ \langle A_f\rangle \text{ as $\CC$-vector spaces. }
\]
 It follows that $\dim [V (f) ]= \deg [ A_f (x) ]= n_1 + n_2 + 2N_3$. Next, we consider a map $\wt{\pi}: P_{n-2} \rarr V(f)$ given by
\[
	\wt{ \pi }( p(x) ) :=
	\begin{cases}
		(d_ip(x) - p'(x) )(\bmod( x- \alpha_i) )		&\text{ if $1 \leq i \leq n_1$,}\\
		p(x) (\bmod(x - \beta_j) )				&\text{ if $1 \leq j \leq n_2$,}\\
		p(x) (\bmod(x - \gamma_l)^2)			&\text{ if $1 \leq l \leq N_3$,}
	\end{cases}	
	\text{ where }
	 d_i = \frac{ f''( \alpha_i ) }{ f'( \alpha_i) } \; \forall \; 1 \leq i \leq n_1.
\]
Note $d_i$ is well-defined since $\alpha_i$ are simple roots of $f(x)$. Applying Lemma~\ref{L: importL}, we deduce the kernel of $\wt{ \pi }$ is exactly $W(f)$. So if we can show $\wt{ \pi }$ is an epimorphism, the first isomorphism theorem \cite[Theorem 3.3.16]{DFBasicAlg} tells us that $V(f) \isom P_{n-2} / W(f)$. In other words,
\[
	\dim [ W(f) ] = \dim  [ P_{n-2} ] - \dim [ V(f) ] = n-1 - (n_1 + n_2 + 2N_3).
\]
So, proof of (\ref{stat: r>2n_1-1}) reduces to the following claim.

\begin{theorem} \label{T: CRsurj} The map $\wt{\pi}: P_{n-2} \rarr V(f)$ is an epimorphism.
\end{theorem}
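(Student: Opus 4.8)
The plan is to show that $\widetilde{\pi}$ is surjective by a dimension count: since $\dim[P_{n-2}] = n-1$ and $\dim[V(f)] = n_1 + n_2 + 2N_3$, the rank–nullity theorem gives $\operatorname{rank}\widetilde{\pi} = (n-1) - \dim[W(f)]$, so surjectivity is equivalent to $\dim[W(f)] \le (n-1) - (n_1 + n_2 + 2N_3) = r + 1 - n_1 = \mu$; combined with the lower bound from Theorem~\ref{T: mainthm}.\ref{item: bdsdim} this would force equality. However, to avoid circularity (we are trying to \emph{prove} $\dim[W(f)] = \mu$), I would instead exhibit enough explicit preimages directly, using the hypothesis $r \ge 2n_1 - 1$, equivalently $n_2 + \sum_{s=1}^{N_3}(k_s - 2) \ge n_1 + 1$ from (\ref{equ: r}). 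Because $V(f)$ splits as a direct sum over the roots, it suffices to hit, for each summand, an arbitrary target value while landing in $0$ on all the other summands; the CRT isomorphism $V(f) \isom R/\langle A_f\rangle$ lets me assemble these local solutions into a single polynomial, and the real question is only whether that polynomial can be taken of degree $\le n-2$.

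The key steps, in order: (1) For a target supported at a double root $\beta_j$ or a triple-or-higher root $\gamma_l$ (where $\widetilde\pi$ is just reduction mod $(x-\beta_j)$ or mod $(x-\gamma_l)^2$), the obvious choice is a scalar multiple of $A_f(x)/(x-\beta_j)$, respectively of $A_f(x)/(x-\gamma_l)^2$ times a linear polynomial — these already vanish to the required order at every other root and have degree $< \deg A_f \le n-2$ after the hypothesis $n\ge \deg A_f$ is checked. (2) For a target supported at a simple root $\alpha_i$, where $\widetilde\pi(p) = (d_i p - p')(\alpha_i)$, I want a polynomial $p$ divisible by $A_f/(x-\alpha_i)$ (to kill the other summands) with prescribed value of $d_i p(\alpha_i) - p'(\alpha_i)$. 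Writing $p(x) = (x-\alpha_i)\,\tfrac{A_f(x)}{x-\alpha_i}\cdot c$ makes $p(\alpha_i)=0$ and $p'(\alpha_i) = c\cdot \tfrac{A_f(\alpha_i)}{\alpha_i}\ne 0$, so one can solve for $c$; here I must check this polynomial has degree $\le n-2$, which is where $\deg A_f + 1 \le n-1$, i.e. $n \ge n_1 + n_2 + 2N_3 + 1$, is needed — and this follows from $r \ge 2n_1 - 1 \ge -1$ together with the expression for $r$, or more robustly from $r \ge 2n_1-1$ directly since it gives $n - 2 - (n_2+2N_3) \ge 2n_1 - 1$, hence $n - 1 \ge 2n_1 + n_2 + 2N_3 \ge 1 + (n_1+n_2+2N_3)$. (3) Sum the resulting preimages over all roots; each lies in $P_{n-2}$, so the sum does too, proving epimorphism.

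Step (2), pinning down preimages of the simple-root summands within the degree bound, is the main obstacle, because the naive preimage $(x-\alpha_i)\cdot A_f(x)/(x-\alpha_i)\cdot c = c\,A_f(x)$ has degree exactly $\deg A_f$, which is fine, but one must also verify that the \emph{same} polynomial reduces to $0$ modulo $(x-\gamma_l)^2$ for the higher-multiplicity roots — this holds because $f_\gamma^2 \mid A_f$, so $A_f$ vanishes to order $\ge 2$ at each $\gamma_l$ — and that its image at $\alpha_i$ under $d_i p - p'$ is a nonzero multiple of $c$, which is immediate since $p(\alpha_i)=0$ forces the image to equal $-p'(\alpha_i) = -c\,A_f'(\alpha_i)$ with $A_f'(\alpha_i)\ne 0$ as $\alpha_i$ is a simple zero of $A_f$. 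Once these local computations are in hand, surjectivity onto each factor is clear, and since $V(f)$ is the direct sum of the factors, $\widetilde\pi$ is onto. I would present the argument factor-by-factor, invoking Lemma~\ref{L: importL} only to identify $\ker\widetilde\pi = W(f)$ and the CRT paragraph already in the text for the structure of $V(f)$, and leave the elementary degree inequalities as one-line verifications.
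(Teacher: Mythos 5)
Your factor-by-factor plan has a real gap at the simple roots, and it cannot be closed with $A_f(x) = f_\alpha(x) f_\beta(x) f_\gamma^2(x)$ as your building block. The $\alpha_i$-component of $\widetilde{\pi}$ is $p \mapsto \bigl(d_i p - p'\bigr)(\alpha_i)$, which involves $p'$. To make a local preimage for, say, the $\beta_j$-component vanish on the $\alpha_i$-component, it is not enough to have $p(\alpha_i) = 0$; you also need $p'(\alpha_i) = 0$, i.e.\ $p$ must vanish to order at least two at $\alpha_i$. Your candidates $c\,A_f(x)/(x-\beta_j)$ and $\ell(x)\,A_f(x)/(x-\gamma_l)^2$ are divisible by $f_\alpha$ only to the first power, so they have \emph{nonzero} derivative at each $\alpha_i$ and therefore hit the $\alpha_i$-components with nonzero values — they do not vanish ``to the required order at every other root'' as claimed. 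The same failure kills your preimage for the $\alpha_i$-component itself: $p = c\,A_f$ has $p(\alpha_{i'}) = 0$ but $p'(\alpha_{i'}) = c\,A_f'(\alpha_{i'}) \neq 0$ for every other simple root $\alpha_{i'}$, so $\widetilde{\pi}(c\,A_f)$ lands on the whole $\alpha$-block with components all proportional to the single scalar $c$; summing such preimages over $i$ can only reach a one-dimensional subspace of the $\mathbb{C}^{n_1}$-part of $V(f)$.

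This is exactly why the paper does \emph{not} use $A_f$ but rather $Q(x) = f_\alpha^2(x) f_\beta(x) f_\gamma^2(x)$, and works with the system of congruences $p \equiv h_i(x) \bmod (x-\alpha_i)^2$, $p \equiv b_j \bmod (x-\beta_j)$, $p \equiv c_l(x) \bmod (x-\gamma_l)^2$. The extra factor of $f_\alpha$ forces every local piece coming from a $\beta$- or $\gamma$-summand to vanish to order $2$ at each $\alpha_i$, and at the $\alpha_i$-summand it lets one prescribe an entire \emph{linear} jet $h_i(x)$ — with $h_i$ chosen so that $d_i h_i(\alpha_i) - h_i'(\alpha_i) = a_i$ — rather than a single value. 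The price is $\deg Q - 1 = 2n_1 + n_2 + 2N_3 - 1$, and the degree bound $2n_1 + n_2 + 2N_3 - 1 \le n - 2$ is \emph{equivalent} to the hypothesis $r \ge 2n_1 - 1$; note that your estimate $\deg A_f + 1 \le n - 1$ would already hold under the weaker $r \ge n_1$, which should itself signal that something has been lost, since $\dim[W(f)] = \mu$ can genuinely fail for small $r$ (Theorem~\ref{T: mainthm}.\ref{item: noneg}). To repair the argument, replace $A_f$ by $Q$, use the partial-fraction/CRT machinery of the paper (or equivalently construct polynomials divisible to order two by $f_\alpha/(x-\alpha_i)$ at every other $\alpha_{i'}$), and carry out the linear-jet calculation with $h_i$ as in~(\ref{equ: CRTh}).
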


\begin{proof} Given $a_i, b_j \in \CC$ and $c_l(x)  \in P_1$, we want to find a polynomial $p(x) \in P_{n-2}$ such that 
\[
	d_i p(x) - p'(x) \equiv a_i (\bmod( x- \alpha_i) ) , \;  p(x) \equiv b_j ( \bmod(x - \beta_j) ), \text{ and } p(x) \equiv c_l(x) (\bmod(x - \gamma_l)^2 )
	\tag{$\ast$}
\]
where $1 \leq i \leq n_1, 1 \leq j \leq n_2$, and $1 \leq l \leq N_3$. Let $Q(x) := f_{\alpha}^2(x) f_{\beta} (x ) f_{\gamma}^2(x)$, and for every $1 \leq i \leq n_1$ we construct
\[
	h_i(x) := 
	\begin{cases}
	a_i x + \wt{a_i}			&\text{if $d_i \ne 0$,} \\
	 -a_ix				&\text{if $d_i = 0$,}
	\end{cases}
	\text{ where } 
	\wt{a_i} :=  \frac{ 2a_i }{ d_i }- \alpha_i a_i.
	\tag{\ref{T: CRsurj}.1}
	\label{equ: CRTh}
\]
Since ideals $I_{\alpha_i}^2$, $I_{\beta_j}$, and $I_{ \gamma_l }^2$ are coprime in $\CC[x]$, the partial fraction of $1 / Q(x)$ can be written in the form
\[
	\frac{1 }{ Q(x) }=	\sum_{i=1}^{n_1} \frac{ A_i(x) }{ (x- \alpha_i)^2 } + \sum_{j=1}^{n_2} \frac{ b_j' }{ x - \beta_j } + \sum_{s =1}^{N_3} \frac{ C_s (x) }{ (x - \gamma_s)^2 } 
\]
where $b_j' \in \CC, A_i(x)$ and $C_s(x)$ are in $P_1$. Let $\wt{A}_i(x)$ be the reminder of $A_i(x) h_i(x)$ dividing $(x - \alpha_i)^2$ from the Euclidean algorithm, and similarly $\wt{ C}_s(x)$ are obtained from $c_s(x) C_s(x)$ dividing $(x - \gamma_s)^2$. We know the polynomial 
\[
	p(x) := \sum_{i=1}^{n_1} \frac{ \wt{ A}_i (x) Q(x) }{ (x- \alpha_i)^2 } + \sum_{j=1}^{n_2} \frac{ b_j' b_j Q(x) }{ x - \beta_j } + \sum_{s =1}^{N_3} \frac{ \wt{ C}_s(x) Q(x) }{ (x - \gamma_s)^2 }  \tag{\ref{T: CRsurj}.2} \label{equ: CRTp}
\]
is of degree at most $K := 2n_1 + n_2 + 2N_3 -1$. Furthermore, applying CRT, we can say $p(x)$ in (\ref{equ: CRTp}) satisfies the congruence relation:
\[
	p(x) \equiv h_i(x) ( \bmod (x- \alpha_i)^2 ), \; p(x) \equiv b_j ( \bmod (x - \beta_j) ), \text{ and } p(x) \equiv  c_l(x) ( \bmod ( x - \gamma_l)^2 ).   \tag{\ref{T: CRsurj}.3} \label{equ: congr}
\]
Since $r \geq 2n_1 -1$, $n-2 \geq K \Rarr p(x) \in P_K \subseteq P_{n-2}$. So to show $p(x)$ in (\ref{equ: CRTp}) is what we expect, it suffices to prove 
\[
	d_ip(x) - p'(x) \equiv a_i ( \bmod(x - \alpha_i) ) \text{ for each } 1 \leq i \leq n_1.
\]
If $d_i = 0$, then $d_i p(x) - p'(x) = -p'(x)$. Using $h_i(x)$ constrctued from (\ref{equ: CRTp}), we know $p(x) \equiv (-a_i x ) ( \bmod (x - \alpha_i)^2 )$. So we can write $p(x) = -a_i x + q_i(x) (x - \alpha_i)^2 \text{ for some } q_i(x) \in \CC[x]$. It follows that $p'(x) = -a_i + \big[ q_i' (x) (x - \alpha_i) + 2q_i(x) \big] (x - \alpha_i) \Rarr -p'(x) \equiv a_i (\bmod( x -\alpha_i ))$. Thus ($\ast$) holds for $1 \leq i \leq n_1$ when $d_i = 0$. 

If $d_i \ne 0$, we consider polynomials $g_i(x) := d_ix - (1 + d_i \alpha_i)$. By this construction: 
\[
	g_i'(x) = d_i \text{ and } g_i(\alpha_i) = -1. \tag{\ref{T: CRsurj}.4} \label{equ: evg_i}
\]
Since $p(x) \equiv (a_i x + \wt{a_i} )( \bmod (x - \alpha_i) )$, $g_i(x) p(x) \equiv g_i(x) (a_i x + \wt{a_i} )( \bmod( x- \alpha_i )^2 )$. Again by the construction of $p(x)$ in (\ref{equ: CRTp}), 
\[
	g_i(x) p(x) = g_i(x) (a_i x + \wt{a_i} ) + q_i(x) (x - \alpha_i)^2. \tag{\ref{T: CRsurj}.5} \label{equ: CRTg_i}
\]
for some $q_i(x) \in \CC[x]$. Let $\wt{ g }_i(x)$ be the first derivative of $g_i(x) (a_i x + \wt{ a_i } )$. 
\begin{align*}
	\wt{g}_i(x) &= g_i'(x) (a_i x + \wt{a_i} ) + g_i(x) a_i = 2d_i a_i x + [ d_i \wt{a_i} - a_i( 1 + d_i \alpha_i )  ].
\end{align*}
Evaluation of the polynomial $\wt{ g }_i$ at $\alpha_i$ yields
\begin{align*}
	\wt{g}_i(\alpha_i) &= 2d_i a_i \alpha_i + d_i \wt{a_i} - a_i - d_i a_i \alpha_i = 2d_i a_i \alpha_i + d_i \left( \frac{2a_i }{d_i} -  a_i \alpha_i \right) - a_i - d_i a_i \alpha_i = a_i.
\end{align*}
Differentiating on both sides of (\ref{equ: CRTg_i}), we obtain
\[
	g_i'(x) p(x) + g_i(x) p'(x) = \wt{g}_i(x) + \Big[ 2q_i(x) + q_i'(x) (x - \alpha_i ) \Big] (x - \alpha_i).
\]
It follows that $g_i'(\alpha_i) p(x) + g_i(\alpha_i) p'(x) \equiv \wt{g}_i(\alpha_i) (\bmod( x - \alpha_i ) )$. We know $\wt{g}_i(\alpha_i) = a_i$ and $g'_i(x) = d_i, g_i(\alpha_i) = -1$ by (\ref{equ: evg_i}). Therefore $d_i p(x) - p'(x) \equiv a_i (\bmod( x - \alpha_i ) )$. Finally, it's trivial to check that $\wt{\pi}$ is an $\CC$-linear map.
\end{proof}


\subsection{Applications of Hermite interpolation} \label{subsec: partialn_1>=2r-1} We begin with a statement of Hermite interpolation that fits into the context of $Z( \eta, \omega, s , k)$. 

\begin{theorem}[Hermite Interpolation] \label{T: Hermite} Let $y = (y_1, y_2, \dots, y_s) \in \CC^s$ be given. Then there exists a unique nonzero polynomial $h(x) \in Z( \eta, \omega ; s , 2s-1)$.
\end{theorem}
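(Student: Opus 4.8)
The content of the statement is that assigning the values $h(\omega_i)=y_i$ for $1\le i\le s$ pins down a unique element $h$ of $Z(\eta,\omega;s,2s-1)$; note that once $h$ is required to lie in $Z$, the node conditions $h'(\omega_i)=\eta_i h(\omega_i)=\eta_i y_i$ are forced, so this is precisely the classical Hermite interpolation problem at the $s$ distinct nodes $\omega_1,\dots,\omega_s$ with prescribed value--derivative data $(y_i,\eta_i y_i)$. The plan is to write the interpolant down from the Hermite basis for existence, and to read off uniqueness directly from the defining relation of $Z$.

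For existence I would take the Lagrange basis polynomials $L_i(x)=\prod_{j\ne i}(x-\omega_j)/(\omega_i-\omega_j)$ and form the two standard families $H_i(x)=\bigl(1-2L_i'(\omega_i)(x-\omega_i)\bigr)L_i(x)^2$ and $\widehat H_i(x)=(x-\omega_i)L_i(x)^2$, each of degree exactly $2s-1$, which satisfy $H_i(\omega_k)=\delta_{ik}$, $H_i'(\omega_k)=0$, $\widehat H_i(\omega_k)=0$, $\widehat H_i'(\omega_k)=\delta_{ik}$. Then $h(x):=\sum_{i=1}^s\bigl(y_iH_i(x)+\eta_i y_i\widehat H_i(x)\bigr)$ has degree at most $2s-1$, satisfies $h(\omega_k)=y_k$, and satisfies $h'(\omega_k)=\eta_k y_k=\eta_k h(\omega_k)$; this last identity is exactly the membership condition for $Z(\eta,\omega;s,2s-1)$, so $h$ lies there, and $h\ne 0$ whenever $y\ne 0$.

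For uniqueness, suppose $h_1,h_2\in Z(\eta,\omega;s,2s-1)$ both take value $y_i$ at $\omega_i$ for every $i$ and set $g:=h_1-h_2$. Then $\deg g\le 2s-1$, $g(\omega_i)=0$, and, applying the $Z$-relation to $h_1$ and to $h_2$, $g'(\omega_i)=h_1'(\omega_i)-h_2'(\omega_i)=\eta_i(h_1(\omega_i)-h_2(\omega_i))=0$. Hence $(x-\omega_i)^2\mid g$ for each $i$, and as the $\omega_i$ are distinct, $\prod_{i=1}^s(x-\omega_i)^2$ divides $g$; this polynomial has degree $2s>2s-1\ge\deg g$, forcing $g=0$. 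The same computation with $y=0$ shows the evaluation map $Z(\eta,\omega;s,2s-1)\to\CC^s$ is injective, so $\dim Z(\eta,\omega;s,2s-1)\le s$; combined with the lower bound $\dim Z(\eta,\omega;s,2s-1)\ge(2s-1)+1-s=s$ from Theorem~\ref{T: bds4Z}, this makes the evaluation map an isomorphism and yields existence and uniqueness simultaneously, bypassing the explicit construction.

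I do not anticipate a real obstacle: the only things needing care are the routine verifications that the Hermite basis polynomials have the claimed degrees and incidence relations, and the observation that the uniqueness here is uniqueness \emph{within $Z$} --- so the vanishing of $g'$ at the nodes must be extracted from the $Z$-constraint rather than assumed as interpolation data. The linear-algebra variant above is the cleanest route, since it reduces the whole theorem to the one-line injectivity check together with Theorem~\ref{T: bds4Z}.
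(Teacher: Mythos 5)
Your proof is correct, and it fills in exactly what the paper leaves implicit: the paper states the theorem and then simply cites a textbook reference for Hermite interpolation without giving its own argument. Your explicit construction via the Hermite basis polynomials $H_i$ and $\widehat H_i$ is the standard one the citation points to, and your uniqueness argument (the difference vanishes to order two at each node, hence is divisible by a degree-$2s$ polynomial and so must be zero) is the standard one as well, so this is essentially the same approach written out in full. Your closing observation --- that injectivity of the evaluation map plus the lower bound $\dim Z(\eta,\omega;s,2s-1)\ge s$ from Theorem~\ref{T: bds4Z} already forces the map to be an isomorphism, with no need for the explicit Hermite basis --- is a clean, self-contained shortcut internal to the paper's own machinery and is arguably preferable here, since it also directly establishes Corollary~\ref{C: HCn_1}. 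One small caveat: as literally stated the theorem says ``unique nonzero polynomial,'' which only holds when $y\ne 0$; you noticed this and handled it correctly, but it is worth flagging that the intended reading (consistent with Corollary~\ref{C: HCn_1}) is ``unique $h$ with $h(\omega_i)=y_i$,'' with $h=0$ being the unique solution when $y=0$.
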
 

The polynomial constructed in Theorem~\ref{T: Hermite} is an instance of Hermite interpolation, which involves the construction of polynomials and its derivatives with prescribed values at each point \cite[\S4.1.2]{VVPraso}. As a result, we prove the following corollary.

\begin{corollary} \label{C: HCn_1} The map $\ev_s: Z( \eta, \omega ; s , 2s-1) \rarr \CC^s$ given by $\ev_s (h) =  (h(\omega_1), h(\omega_2), \dots, h(\omega_{s}) )^T$ is a well-defined vector space isomorphism. 
\end{corollary}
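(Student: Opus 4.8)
The plan is to deduce Corollary~\ref{C: HCn_1} directly from Theorem~\ref{T: Hermite} together with the bounds already established for $Z(\eta,\omega;s,k)$. First I would observe that $\ev_s$ is manifestly $\CC$-linear, since evaluation at a fixed point is linear and $Z(\eta,\omega;s,2s-1)$ is a linear subspace of $\CC[x]$; so the content is that $\ev_s$ is a bijection. Because both the source and the target are finite-dimensional, it suffices to prove surjectivity and then compare dimensions, or alternatively to prove injectivity and compare dimensions. The dimension count is the part that ties the pieces together: by Theorem~\ref{T: bds4Z} applied with $k = 2s-1 \geq s-1$, we have $\dim Z(\eta,\omega;s,2s-1) \leq 2s-1$, and a priori this could exceed $s = \dim \CC^s$, so a naive rank argument is not immediate — this mismatch is really the main obstacle and the reason the Hermite statement is needed.

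Here is how I would resolve it. Surjectivity of $\ev_s$ is exactly the assertion of Theorem~\ref{T: Hermite}: given an arbitrary target vector $y = (y_1,\dots,y_s) \in \CC^s$, that theorem produces a polynomial $h \in Z(\eta,\omega;s,2s-1)$ with $h(\omega_i) = y_i$ for all $i$ (I read the ``prescribed values'' in the cited Hermite construction as the $y_i$, with the derivative conditions $h'(\omega_i) = \eta_i h(\omega_i)$ being precisely membership in $Z$). Hence $\ev_s$ is onto, so $\dim Z(\eta,\omega;s,2s-1) \geq s$. Next I would extract injectivity from the uniqueness clause of Theorem~\ref{T: Hermite}: if $h_1, h_2 \in Z(\eta,\omega;s,2s-1)$ both map to the same $y$, then $h_1$ and $h_2$ are two solutions of the same Hermite problem, so $h_1 = h_2$ by uniqueness; equivalently, an element of the kernel of $\ev_s$ is a solution of the Hermite problem with data $y = 0$ and with the zero polynomial as one solution, hence is itself zero. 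Therefore $\ev_s$ is injective as well, and being a linear bijection between finite-dimensional spaces it is a vector space isomorphism.

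As a byproduct, combining injectivity ($\dim Z(\eta,\omega;s,2s-1) \leq s$) with the lower bound from Theorem~\ref{T: bds4Z} ($\dim Z(\eta,\omega;s,2s-1) \geq (2s-1)+1-s = s$) pins the dimension at exactly $s = k+1-s$ with $k = 2s-1$, which is the equality claimed in (\ref{stat: k>2s-1}) at the boundary case; for $k > 2s-1$ one then invokes Proposition~\ref{P: natEmbed}(1) and the inclusions there to propagate the equality, though strictly Corollary~\ref{C: HCn_1} only needs the $k = 2s-1$ case. I expect the only delicate point to be making sure the Hermite interpolation statement as phrased truly packages both the existence of a polynomial of degree $\leq 2s-1$ hitting arbitrary values $y_i$ at the $s$ nodes \emph{and} satisfying the $s$ linear derivative constraints defining $Z$ — i.e. that the $2s$ conditions (values and derivative relations at $s$ points) are exactly the $2s$ Hermite conditions at those points, which is why a polynomial of degree $2s-1$ is both necessary and sufficient. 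Once that identification is clean, the corollary is a formal consequence.
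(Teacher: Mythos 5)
Your argument is correct and is essentially the paper's own: linearity is checked directly, and bijectivity is read off from the existence (surjectivity) and uniqueness (injectivity) clauses of Theorem~\ref{T: Hermite}. The extra dimension-count remarks you add are harmless but not needed, and your observation that the cited Hermite statement must be read as prescribing the values $y_i$ at the nodes with the derivative constraints supplied by membership in $Z$ is exactly the intended reading.
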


\begin{proof} First note, $\ev_s$ is bijective from Theorem~\ref{T: Hermite}. To verify linearity, let $h, g \in Z( \eta, \omega ; s , 2s-1)$ and $c \in \CC$. Since both $\CC^s$ and $\CC[x]$ are vector spaces, 
\begin{align*}
	\ev_s (h ) + c \ev_s (g) &= ( (h + cg)(\omega_1), (h + cg)(\omega_2), \dots, (h + cg)(\omega_{s} ) )^T = \ev_s ( h+ cg).
\end{align*}
Since the map $\ev_s$ is well-defined, $Z( \eta, \omega ; s , 2s-1)$ is isomorphic to $\CC^s$ via $\ev_s$.
\end{proof}

\subsection*{Proof of statement (\ref{stat: k>2s-1})} Let $k \geq 2s - 1$ be given. As in Corollary~\ref{C: HCn_1}, the evaluation map $\ev_{s, k}: Z( \eta, \omega ; s , k) \rarr \CC^{s}$ given by $q(x) \mapsto (q(\alpha_1), q(\alpha_2), \dots, q(\alpha_{s}) )^T$ is again linear. In addition, $\ev_s$ is surjective because $\CC^s \isom Z(\eta, \omega; s, 2s-1)$ embeds into $Z( \eta, \omega ; s , k)$ as a subspace via Proposition~\ref{P: natEmbed}. By the first isomorphism theorem \cite[Theorem 3.3.16]{DFBasicAlg}, $Z( \eta, \omega ; s , k) / \ker \negthinspace (\ev_{s, k}) \cong \CC^{s}$. So $Z( \eta, \omega ; s , k) \cong \ker \negthinspace (\ev_{s, k}) \oplus \CC^{s}$, which implies $\dim  Z( \eta, \omega ; s , k) = \dim [\ker \negthinspace (\ev_{s, k}) ] + \dim \CC^{s} = \dim [\ker \negthinspace (\ev_{s, k}) ]+ s$. To compute the dimension of the kernel of $\ev_{s, k}$, recall that $\ker \negthinspace (\ev_{s, k}) : = \{ q(x) \in Z( \eta, \omega ; s , k) \mid q(\omega_i) = 0 \text{ for every } 1 \leq i \leq s, i \in \ZZ_+\}$. Observe from (\ref{stat: defnZ}), that $q(x) \in \ker ( \ev_{s, k} )$ if and only if $q(x), q'(x)$ vanish at all $\omega_i$s. Since $\omega_i \ne \omega_j$ for all $i \ne j$, $q(x) \in \ker ( \ev_{s, k} )$ if and only if it is divisible by $\prod_{i = 1}^{s} (x - \omega_i)^2 =: \Omega(x)$. It follows that 
\[
	\ker \negthinspace (\ev_{s, k}) = \{ g(x) \Omega(x) \mid g(x) \in \CC[x], \deg g \leq k - \deg \Omega \} \Lrarr  \dim [\ker \negthinspace (\ev_s) ] = (k - 2s) + 1.
\]
Therefore, $\dim  [Z( \eta, \omega ; s , k) ] = \dim [ \ker \negthinspace (\ev_{s, k}) ] + s = (k- 2s + 1) + s = k + 1 - s$.


\section{Reduction of associated matrix}  \label{sec: RAM} 

The main result we are going to prove in this section is that degenerate $Z( \eta, \omega, s, k)$s, where $k \geq 2s -2$, are restricted in the sense that $\eta_i = f_{\omega} ''( \omega_i ) / f_{\omega}' ( \omega_i )$ for all $1 \leq i \leq s$ where $f_{\omega}  := \prod_{i=1}^s (x - \omega_i)$. As an application, we prove $\dim [ W(f) ] = \mu$ whenever $r = 2n_1 - 2$. Combining Example~\ref{E: s=2}, Corollary~\ref{C: n_1=0} and (\ref{stat: r>2n_1-1}), we complete the proof of Theorem~\ref{T: mainthm}.\ref{item: equdim} except the last case $(n_1, r) = (3, 3)$, which is handled in \S\ref{sec: counterEX}. In order to prove our main results, we first present the following induction step.

\begin{lemma}\label{L: RAM} Assume $k \geq s + 1$ and let $Z(s +1, k) := Z(\eta, \omega ; s +1, k)$, and $Z(s, k-2) = Z(\wt{ \eta }, \wt{\omega}; s, k-2)$ where $\wt{ \omega }$ is obtained from $\omega$ by ignoring $\omega_i$ and $\wt{ \eta }_j = \eta_j - [ 2 /( \omega_j - \omega_i ) ]$ for all $j \ne i$. If $Z(s +1, k)$ degenerates, so does $Z(s, k-2)$. 
\end{lemma}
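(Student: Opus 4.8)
The plan is to translate the degeneracy hypothesis into a rank statement about associated matrices and then produce, from a kernel vector of the $(s+1)$-matrix, a kernel vector of the $(s{-}2)$-adjusted matrix — that is, to exhibit a nonzero polynomial witnessing that $\dim[Z(s,k-2)] > (k-2)+1-s = k-1-s$, which by Theorem~\ref{T: bds4Z} and Remark~\ref{rem: equiv} forces $Z(s,k-2)$ to degenerate. I would first fix, after relabeling, that the ignored node is $\omega_{s+1}$, so $\wt\omega = (\omega_1,\dots,\omega_s)$ and $\wt\eta_j = \eta_j - 2/(\omega_j-\omega_{s+1})$. The natural object linking the two spaces is the substitution $p(x) \mapsto p(x)\cdot(x-\omega_{s+1})$: note that if $q \in Z(\wt\eta,\wt\omega;s,k-2)$ then $p(x) := (x-\omega_{s+1})^2 q(x)$ has degree at most $k$, vanishes to order $2$ at $\omega_{s+1}$ (hence automatically satisfies the condition there for \emph{any} prescribed $\eta_{s+1}$), and at each $\omega_j$ ($j\le s$) one computes $p'(\omega_j) - \eta_j p(\omega_j) = (\omega_j-\omega_{s+1})^2\big[q'(\omega_j) - (\eta_j - \tfrac{2}{\omega_j-\omega_{s+1}})q(\omega_j)\big] = (\omega_j-\omega_{s+1})^2\big[q'(\omega_j)-\wt\eta_j q(\omega_j)\big]$. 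Thus multiplication by $(x-\omega_{s+1})^2$ embeds $Z(\wt\eta,\wt\omega;s,k-2) \hookrightarrow Z(\eta,\omega;s+1,k)$.

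Next I would set up the dimension count. By Theorem~\ref{T: bds4Z} (using $k \ge s+1 > s$, so the hypothesis $k\ge s$ for the $(s+1)$-space and $k-2 \ge s-1$ for the $s$-space both hold), the "generic" dimensions are $\dim Z(s+1,k) \ge k-s$ and $\dim Z(s,k-2) \ge k-1-s$, with degeneracy meaning equality, equivalently the associated matrix has full rank $s+1$ (resp. $s$). Since $Z(s+1,k)$ degenerates, $\dim Z(s+1,k) = k-s$. The image of the embedding above is the subspace of $Z(s+1,k)$ consisting of polynomials divisible by $(x-\omega_{s+1})^2$; I would identify its codimension inside $Z(s+1,k)$ by analyzing the two-step evaluation-and-derivative map $Z(s+1,k) \to \CC^2$, $p \mapsto (p(\omega_{s+1}), p'(\omega_{s+1}))$. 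The kernel of this map is exactly the image of the embedding. So the point is to show this map is surjective — i.e. has rank exactly $2$ — under the standing hypotheses; then $\dim Z(s,k-2) = \dim(\ker) = \dim Z(s+1,k) - 2 = k-s-2+ \cdots$ — wait, that gives $k-s-2$, which is \emph{less} than $k-1-s$, so it would in fact contradict the lower bound unless the map has rank at most $1$. This apparent tension is the crux: I expect that degeneracy of $Z(s+1,k)$ is precisely what forces the evaluation-derivative map at $\omega_{s+1}$ to drop rank, so that $\dim(\ker) = \dim Z(s+1,k) - \rank \ge (k-s) - 1 = k-1-s$, and combined with the upper bound $\dim Z(s,k-2)\le k-1-s$ from Theorem~\ref{T: bds4Z} we would get equality, i.e. degeneracy of $Z(s,k-2)$.

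Concretely, I would argue the rank drop via the associated matrices. Write $A$ for the $(s+1)\times(k+1)$ associated matrix of $Z(s+1,k)$ and $\wt A$ for the $s\times(k-1)$ associated matrix of $Z(s,k-2)$. The embedding "multiply by $(x-\omega_{s+1})^2$" corresponds, on the level of coefficient vectors, to a fixed injective linear map $\CC^{k-1}\to\CC^{k+1}$, and the relation between the rows of $\wt A$ and the first $s$ rows of $A$ is exactly the row operation recorded in the definition of $\wt\eta$ (subtract $2/(\omega_j-\omega_{s+1})$ times a Vandermonde-type shift). The plan is to show: $\rank \wt A \le \rank A - 1$. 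If $\rank A = s+1$ (full), then $\rank\wt A \le s$, which is automatic and useless; so instead I use that $Z(s+1,k)$ \emph{degenerates}, $\rank A \le s$, to conclude $\rank \wt A \le s - 1 < s$, hence $\wt A$ is not full rank and $Z(s,k-2)$ degenerates by Remark~\ref{rem: equiv}. Establishing $\rank \wt A \le \rank A - 1$ is the heart of the matter: I would realize $\wt A$ (up to invertible column operations coming from the coefficient map above and the explicit row operations) as obtained from $A$ by deleting the row indexed by $\omega_{s+1}$ \emph{and} two columns, in such a way that the two deleted columns together with the deleted row span a $1$-dimensional "extra" rank contribution; tracking this carefully — i.e. showing the deletion of that row costs at least one unit of rank given that the two column deletions cost nothing beyond it — is the step I expect to be the main obstacle, and it is essentially a linear-algebra bookkeeping argument with the shift structure of $A$. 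Once that inequality is in hand, the lemma follows immediately.
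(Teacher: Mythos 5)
You have the right embedding and the right image computation: multiplication by $(x-\omega_{s+1})^2$ carries $Z(\wt\eta,\wt\omega;s,k-2)$ isomorphically onto $\{p\in Z(\eta,\omega;s+1,k):(x-\omega_{s+1})^2\mid p\}$, which is exactly what the paper does. The gap is in the codimension count, and you notice it yourself (``wait, that gives $k-s-2$''). The issue is that you compare against the two-component map $p\mapsto(p(\omega_{s+1}),p'(\omega_{s+1}))$ and ask whether it is surjective; it never is, because for every $p\in Z(s+1,k)$ the constraint $p'(\omega_{s+1})=\eta_{s+1}\,p(\omega_{s+1})$ is already built into the space, so the image lies in the line $\{(a,\eta_{s+1}a)\}\subset\CC^2$ and the map has rank at most $1$. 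Equivalently, one should use only the evaluation map $\ev\colon Z(s+1,k)\to\CC$, $p\mapsto p(\omega_{s+1})$; its kernel is precisely your embedded subspace (since $p(\omega_{s+1})=0$ forces $p'(\omega_{s+1})=0$ automatically inside $Z$), and its codimension is at most $1$. That immediately gives $\dim Z(s,k-2)\geq \dim Z(s+1,k)-1>(k-s)-1=k-1-s$, i.e.\ degeneracy. This is exactly the paper's argument.

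Having noticed the tension, you then pivot to an associated-matrix rank argument and claim the crux is to prove $\rank\wt A\leq\rank A-1$, but you leave that step unestablished, flagging it as ``the main obstacle.'' As written, the proposal therefore does not constitute a complete proof. The fix is small: drop the $\CC^2$ target, use $\CC$, and observe that the derivative condition at $\omega_{s+1}$ is already part of the definition of $Z(s+1,k)$. No matrix bookkeeping is needed.
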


\begin{proof} Let $\ev_i: Z( s + 1, k ) \rarr \CC$ be the evaluation map $p(x) \mapsto p( \omega_i )$. We write $\wt{ Z }_i$ for the kernel of $\ev_i$. Then
\[
	\wt{ Z }_i = \{ p \in Z( s+1 , k) : p'( \omega_i ) = p( \omega_i ) = 0 \} =  \{ p \in Z(s +1, k) : (x - \omega_i)^2 \text{ divides } p \}.
\]
So we get an inclusion $\rho: \wt{ Z }_i \hrarr P_{k-2}$ defined via $p(x) \longmapsto p(x) / (x- \omega_i)^2$. We claim $\Ima \rho =Z(s, k-2)$. For every $p \in \wt{ Z }_i$, write $\wt{ p } (x) = ( \rho p ) ( x )$, we have $p( x) = (x- \omega_i)^2 \wt{ p } ( x)$. Differentiating on both sides, we obtain $p '( x ) =  2 ( x - \omega_i )  \wt{ p } ( x ) + (x -\omega_i)^2 \wt{  p } '( x )$. Substituting $x = \omega_j$ in this equation, we obtain
\[
	2( \omega_j - \omega_i ) \wt{ p }( \omega_j ) + ( \omega_j - \omega_i)^2 \wt{ p }' ( \omega_j )  = p'( \omega_j ) = \eta_j p( \omega_j ) = \eta_j ( \omega_j - \omega_i )^2 \wt{ p } ( \omega_j ).
\]
Dividing $(\omega_j - \omega_i)^2$ on both sides, we obtain $\wt{p } '( \omega_j )  =\wt{ \eta }_j \wt{p } ( \omega_j )$. So the condition $p'( \omega_j ) = \eta_j p( \omega_ j)$ is equivalent to $\wt{ p } ' ( \omega_j ) = \wt{ \eta }_j \wt{ p } ( \omega_j )$ for $p \in \wt{ Z }_i$ and $j \ne i$. Hence $\rho$ embeds $\wt{ Z }_i$ isomorphically onto $Z(s, k-2)$. Since the map $\ev_i$ is linear,
\[
	\dim [ Z(s +1, k ) ] - \dim [ Z(s, k-2) ] = \dim [ Z(s+1, k) / \wt{ Z } ] \leq \Ima \ev_i =1.
\]
If $Z(s +1, k)$ degenerates, then $\dim [ Z(s+1, k) ] > k +1 -(s+1)$. Combining this with the above inequality, $Z(s, k-2)$ is degenerate because $\dim [ Z(s, k-2) ] \geq \dim [ Z(s+1, k) ] -1 > (k-2) +1 -s$. 
\end{proof} 

\begin{rem} \label{rem: RAM} \upshape The map $\ev_i: Z(s +1, k) \rarr \CC$ is onto when $k \leq 2s +1$.  Suppose to the contrary, then we have $p( \omega_i ) = 0 = p' ( \omega_i )$ for all $p \in Z(s+1 , k)$ and $1 \leq i \leq s+1$. Hence $p(x)$ is divisible by the polynomial $\prod_{i=1}^{s+1} (x - \omega_i)^2$, which is of degree $2(s+1)$. This contradicts the fact that $\deg p \leq k \leq 2s +1$. As a result, $\dim[ Z(s +1, k) ] = 1 + \dim [ Z(s , k-2) ]$. Hence both $Z(s +1, k)$ and $Z(s, k-2)$ degenerate simultaneously.
\end{rem}

\begin{theorem} \label{T: dimk>2s-2} Given $\eta, \omega \in \CC^s$ with $s \geq 2$, $k \geq 2s -2$. If $Z(s, k) = Z(\eta, \omega; s, k)$ is degenerate then $\eta_i = f_{ \omega } '' ( \omega_i ) / f_{\omega}' ( \omega_i )$ for all $i = 1, \dots, s$ where $f_{\omega} := \prod_{i=1}^s ( x- \omega_i )$.
\end{theorem}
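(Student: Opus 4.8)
The plan is to argue by induction on $s$, using Lemma~\ref{L: RAM} and Remark~\ref{rem: RAM} to descend from $Z(s,k)$ to a smaller space $Z(s-1,k-2)$, with Example~\ref{E: s=2} serving as the base case $s=2$. First I would fix a degenerate $Z(s,k)$ with $k\geq 2s-2$ and pick any index $i$; since $k\geq 2s-2$ forces $k-2\geq 2(s-1)-2$, the hypothesis of Lemma~\ref{L: RAM} is met after one step, so the associated space $Z(s-1,k-2)=Z(\wt\eta,\wt\omega;s-1,k-2)$ — where $\wt\omega$ drops $\omega_i$ and $\wt\eta_j=\eta_j-2/(\omega_j-\omega_i)$ — is again degenerate. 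By induction, $\wt\eta_j=f_{\wt\omega}''(\omega_j)/f_{\wt\omega}'(\omega_j)$ for all $j\neq i$, where $f_{\wt\omega}=\prod_{l\neq i}(x-\omega_l)$. The key computation is then to translate this back into a statement about $\eta_j$: writing $f_\omega=(x-\omega_i)f_{\wt\omega}$, one differentiates twice and evaluates at $\omega_j$ (with $j\neq i$) to get $f_\omega'(\omega_j)=(\omega_j-\omega_i)f_{\wt\omega}'(\omega_j)$ and $f_\omega''(\omega_j)=2f_{\wt\omega}'(\omega_j)+(\omega_j-\omega_i)f_{\wt\omega}''(\omega_j)$, hence
\[
\frac{f_\omega''(\omega_j)}{f_\omega'(\omega_j)}=\frac{2}{\omega_j-\omega_i}+\frac{f_{\wt\omega}''(\omega_j)}{f_{\wt\omega}'(\omega_j)}=\frac{2}{\omega_j-\omega_i}+\wt\eta_j=\eta_j.
\]
This establishes the desired identity for every $j\neq i$.

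Since $i$ was arbitrary, running the argument with two different choices of the omitted index (possible because $s\geq 2$, and in fact needing $s\geq 3$ so that at least two indices remain fixed in each pass — the case $s=2$ being the base case) gives $\eta_j=f_\omega''(\omega_j)/f_\omega'(\omega_j)$ for all $j$. One subtlety to handle carefully: Lemma~\ref{L: RAM} only guarantees that degeneracy propagates downward, and the dimension-count in its proof uses $\dim[Z(s,k-2)]\geq \dim[Z(s+1,k)]-1$; to run the induction cleanly I would instead invoke Remark~\ref{rem: RAM}, which shows that when $k\leq 2s+1$ (equivalently $k-2\leq 2(s-1)+1$, the range relevant here since we are near $k=2s-2$) the evaluation map $\ev_i$ is genuinely onto, so degeneracy is in fact equivalent between the two levels; this keeps the induction from leaking into ranges where the hypothesis $k\geq 2(s-1)-2$ might fail.

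The main obstacle I anticipate is bookkeeping at the boundary of the range: the theorem allows $k=2s-2$ exactly, and after one descent step we land at $k-2=2s-4=2(s-1)-2$, which is again exactly at the boundary, so the induction is tight and there is no slack — one must check that Remark~\ref{rem: RAM}'s condition $k\leq 2s+1$ is compatible with $k\geq 2s-2$ at every level (it is, since $2s-2\leq 2s+1$), and that the base case $s=2$ in Example~\ref{E: s=2} indeed covers $k=2s-2=2$ as well as all larger $k$. A second point requiring care is that when $s=2$ the conclusion $\eta_i=f_\omega''(\omega_i)/f_\omega'(\omega_i)$ must be read off directly from statement~(\ref{stat: s=2}) in Example~\ref{E: s=2}: there $f_\omega=(x-\omega_1)(x-\omega_2)$, so $f_\omega''\equiv 2$ and $f_\omega'(\omega_1)=\omega_1-\omega_2$, giving $f_\omega''(\omega_1)/f_\omega'(\omega_1)=2/(\omega_1-\omega_2)=\eta_1$ by~(\ref{stat: s=2}), and symmetrically for $\eta_2$ — so the base case matches the inductive conclusion exactly, and the induction closes.
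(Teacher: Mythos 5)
Your proposal follows exactly the same route as the paper: induction on $s$ with base case $s=2$ drawn from Example~\ref{E: s=2}, descent via Lemma~\ref{L: RAM} to the degenerate $Z(\wt\eta,\wt\omega;s-1,k-2)$, the observation that $k\geq 2s-2$ implies $k-2\geq 2(s-1)-2$ so the induction hypothesis applies, the translation $\eta_j = \wt\eta_j + 2/(\omega_j-\omega_i) = f_\omega''(\omega_j)/f_\omega'(\omega_j)$, and a second pass dropping a different index to recover the remaining coordinate. Your side remark about invoking Remark~\ref{rem: RAM} is harmless but unnecessary, since Lemma~\ref{L: RAM} already gives exactly the downward propagation of degeneracy the induction needs.
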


\begin{proof} We prove by induction on $s$. Recall (\ref{stat: s=2}) of Example~\ref{E: s=2} shows that degenerate $Z( 2, k)$ must satisfy $\eta_1 = 2 / ( \omega_1 - \omega_2) = - \eta_2$ when $k \geq 2$. This checks the base case for $s =2$. 

Now suppose $Z( s, k )$ is degenerate when $k \geq 2s-2$. By Lemma~\ref{L: RAM},  $\wt{ Z }( s-1, k-2 ) := Z(\wt{\eta}, \wt{\omega}; s-1, k-2)$ also degenerates with $\wt{ \eta }_i = \eta_i -[  2/ ( \omega_i - \omega_{ s } ) ]$ and $\wt{ \omega}_i = \omega_i$ for all $1 \leq i \leq s-1$. Note that the assumption $k \geq 2s -2$ implies $k -2 \geq 2(s-1) -2$. So we can apply induction hypothesis on $\wt{Z } (s-1, k-2)$ to conclude for each $i =1, \dots s-1$,
\[
	\wt{ \eta }_i = \sum_{j \ne i }^{s-1} \frac{2}{ \omega_i - \omega_j } \Lrarr \eta_i = \sum_{j \ne i}^{s-1} \frac{1}{ \omega_i - \omega_j } + \frac{2}{ \omega_i - \omega_{s} } = \sum_{j \ne i}^{s} \frac{2}{ \omega_i - \omega_j } = \frac{ f_{\omega}''( \omega_i ) }{ f_{\omega}'( \omega_i ) }.
\]
The last equality follows from (\ref{equ: polyalg}). Let $\wh{ \omega }, \wh{ \eta }$ be $(s-1)$-tuples where $\wh{ \omega }$ is defined from $\omega$ by ignoring $\omega_1$ and $\wh{ \eta }$ defined via $\wh{ \eta }_i := \eta_i  - [ 2 / ( \eta_i - \eta_1 ) ]$ for all $i =2, \dots, s$. Applying Lemma~\ref{L: RAM} to $Z( \wh{ \eta }, \wh{ \omega }, s-1 , k-2)$, we obtain $\eta_{s} = f_{\omega}'' ( \omega_{s} )  / f_{\omega}' ( \omega_{s} )$. 
\end{proof}


\subsection{Proof of Theorem~\ref{T: mainthm}.\ref{item: equdim} (when $r = 2n_1 -2$)}  \label{subsec: r>=2n_1-2} Recall in (\ref{equ: r}) that 
\[
	\textstyle r = n_1 + (n_2 -2) + \sum_{s=1}^{N_3} (k_s -2).
\]
If $r =2n_1 -2$, then 
\[
	 \textstyle n_1 = n_2 + \sum_{s=1}^{N_3} (k_s -2) \geq n_2 + \sum_{s=1}^{N_1} 1 = n_2 + N_3. \tag{\ref{subsec: r>=2n_1-2}.1}
\]
Recall the rational function $d(x)$ introduced in (\ref{equ: d_f}). We denote
\[
	\wt{d}(x)  = \wt{d }_f(x) := d(x) - \frac{ f''_{\alpha}(x) }{ f'_{\alpha}(x) } = \sum_{i = 1}^{n_2} \frac{3}{x - \beta_i} + \sum_{j = 1}^{N_3} \frac{2(k_j - 1)}{x - \gamma_j}. \tag{\ref{subsec: r>=2n_1-2}.2} \label{equ: dwt}
\]
Since $\wt{d}(x)$ is a rational function, let $h(x)$ be the numerator of $\wt{d}(x)$ (in lowest terms).

Since $W(f)$ does not vanish (i.e. \negmedspace $n_2 \geq 2$ or $N_3 \geq 1$ by \cite[Theorem~1.5.(ii)]{Zar12}), $\wt{d}(x)$ is not identically zero. So is the polynomial $h(x)$. By (\ref{subsec: r>=2n_1-2}.1), $\deg h \leq n_2 + N_3 -1 \leq n_1 -1$. So $h(x)$ cannot vanish at more than $n_1 - 1$ points by the fundamental theorem of algebra. Suppose to the contrary that $W(f) \isom Z(\delta, \alpha ; n_1, r)$ is degenerate. By Theorem~\ref{T: dimk>2s-2}, 
\[
	 \delta_i = d(\alpha_i) = \sum_{j \ne i}^{n_1} \frac{2 }{ \alpha_i - \alpha_j } \Lrarr \wt{ d } ( \alpha_i ) =  \delta_i -  \frac{ f_{\alpha}''( \alpha_i ) }{ f_{\alpha}'( \alpha_i ) }  = 0.
\]
So $h(x)$ also vanishes at all $\alpha_i$s. This contradicts the fact that  $\deg h \leq n_1 -1$. So all $W(f)$s that are isomorphic to $Z( \delta, \alpha, n_1, r)$ with $r = 2n_1 -2$ are non-degenerate. These spaces have dimension $\mu = r + 1 -n_1$ from Remark~\ref{rem: equiv}.


\section{Examples of degenerate $W(f)$} \label{sec: counterEX}

In this section we will prove the last case $(n_1, r) = (3, 3)$ of Theorem~\ref{T: mainthm}.\ref{item: equdim} and check that the boundedness condition on $n_1$ in Theorem~\ref{T: mainthm}.\ref{item: equdim} for $W(f)$ to be non-degenerate is necessary by constructing three types of explicit examples. (i.e. Theorem~\ref{T: mainthm}.\ref{item: noneg})

\begin{theorem} \label{T: counterEX} \upshape If $f (x) / f_{\alpha} (x ) \ne x^4$ then $W(f) \isom Z(4, 4)$ is degenerate (i.e., $\dim Z(4, 4) = 2$) with $\alpha_i$s appropriately chosen.
\end{theorem}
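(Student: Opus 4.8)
The plan is to reduce the claim to a single nondegeneracy computation for $Z(\delta,\alpha;4,4)$ and then exhibit the $\alpha_i$'s explicitly. By Corollary~\ref{C: CimportL} we have $W(f)\isom Z(\delta,\alpha;4,4)$ with $\delta_i=d(\alpha_i)$, so the theorem is equivalent to the statement that, when $f/f_\alpha\neq x^4$, one can choose the simple roots $\alpha_1,\dots,\alpha_4$ so that the associated matrix $A$ of $Z(\delta,\alpha;4,4)$ fails to have full rank, hence (Remark~\ref{rem: equiv}) $\dim Z(4,4)=2>\mu=1$. Here $n_1=4$ and $r=4$, so by~(\ref{equ: r}) the multiple-root data satisfies $n_2+\sum_{s=1}^{N_3}(k_s-2)=2$, i.e. $f/f_\alpha$ is (up to the normalization $q(x)=x^2$ or $x^2-1$ discussed after Corollary~\ref{C: maincor}) one of $x^2(x-c)^2$, $(x^2-1)(x-c)(x-c')$, $x^6$, $x^4(x-c)$, etc. — exactly the sextic-degree multiple-root patterns with $r=4$ appearing in Table~\ref{table: degf=4/5/6} minus the one where $f/f_\alpha=x^4$ accounts for a single root of multiplicity $4$ with the rest simple.

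The key structural input is Theorem~\ref{T: dimk>2s-2}: since $k=4\geq 2s-2=6$ is \emph{false} here ($4<6$), that theorem does \emph{not} apply, which is precisely why degeneracy is possible — this is the boundary case $k=2s-4$. So instead I would work with $\wt d(x)=d(x)-f_\alpha''(x)/f_\alpha'(x)=\sum 3/(x-\beta_i)+\sum 2(k_j-1)/(x-\gamma_j)$ from~(\ref{equ: dwt}) and its numerator $h(x)$ in lowest terms. The obstruction to running the argument of \S\ref{subsec: r>=2n_1-2} verbatim is that now $\deg h$ can be as large as $n_1=4$ rather than $\leq n_1-1$, so $h$ \emph{can} vanish at four suitably chosen points $\alpha_1,\dots,\alpha_4$. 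First I would compute $\deg h$ case by case (it equals $n_2+N_3-1$ generically, but may be exactly $n_1-1=3$ or, in the special configurations, one must track whether a top-degree cancellation occurs) and show that in every case except $f/f_\alpha=x^4$ the polynomial $h$ has degree $\geq 1$ with at least... hmm, actually the cleanest route: show $\wt d$ is a nonzero rational function whose numerator $h$ has \emph{positive} degree, pick $\alpha_1,\dots,\alpha_{\deg h}$ to be its roots and the remaining $\alpha_i$'s generic (distinct from all roots of $f/f_\alpha$ and from each other), so that $\delta_i=f_\alpha''(\alpha_i)/f_\alpha'(\alpha_i)$ holds for those $i$. Then I claim the associated matrix still drops rank; to see this, observe that a degenerate $Z(\delta,\alpha;4,4)$ is characterized by the solvability condition that some nonzero quartic $p$ satisfies $p'(\alpha_i)=\delta_i p(\alpha_i)$ for all four $i$, and when enough of the $\delta_i$ equal $f_\alpha''/f_\alpha'$ the polynomial $f_\alpha$ itself (degree $4$) is a candidate witness — one checks $f_\alpha'(\alpha_i)=\delta_i f_\alpha(\alpha_i)$ automatically at a root $\alpha_i$ of $f_\alpha$ since $f_\alpha(\alpha_i)=0$ forces only $f_\alpha'(\alpha_i)=0$, which fails. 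So the witness must be engineered more carefully.

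The honest main obstacle, therefore, is \textbf{producing the explicit degenerate witness polynomial and the matching choice of $\alpha_i$'s}: rather than $f_\alpha$, the right candidate is a polynomial $p$ of degree $4$ that has double roots at two of the $\alpha_i$ and is tuned at the other two; equivalently one solves the $4\times 5$ homogeneous system~(\ref{equ: assomatrix}) and shows that, with $\alpha_i$ free and $\delta_i=d(\alpha_i)$ constrained by the fixed multiple-root part, the $4\times 4$ minors all vanish. I would handle this by direct computation in each of the finitely many multiple-root patterns with $(n_1,r)=(4,4)$: normalize $q$ to $x^2$ or $x^2-1$, write down $d(x)$, and solve the resulting system symbolically in $\alpha_1,\alpha_2,\alpha_3,\alpha_4$, exhibiting an explicit solution locus of positive dimension (so that "appropriately chosen $\alpha_i$" is non-vacuous, and moreover generic $\alpha_i$ along that locus keep all the roots distinct). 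The final bookkeeping step is to confirm $\dim Z(4,4)=2$ exactly (not $3$ or more): the lower bound $\mu=1$ is Theorem~\ref{T: mainthm}.\ref{item: bdsdim}, the upper bound $\leq r=4$ is Theorem~\ref{T: bds4Z}, and once $A$ is shown to have rank exactly $3$ (corank $2$) we are done — so I must verify the rank drops by exactly one, i.e. that no $3\times 3$ minor vanishes along the chosen locus, which is a short genericity check.
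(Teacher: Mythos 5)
Your proposal correctly reduces Theorem~\ref{T: counterEX} to showing that the associated matrix of $Z(\delta,\alpha;4,4)$ drops rank for an appropriate choice of $\alpha_i$, and you rightly observe that Theorem~\ref{T: dimk>2s-2} is unavailable here since $k=4 < 2s-2 = 6$. But the detailed analysis contains errors. Your claim that ``$\deg h$ can be as large as $n_1 = 4$'' is wrong: for $(n_1,r)=(4,4)$ one has $n_2 + N_3 \leq 2$, so the numerator $h$ of $\wt{d}$ has $\deg h = n_2 + N_3 - 1 \leq 1$; the reason the argument of \S\ref{subsec: r>=2n_1-2} does not apply is not that $h$ might have too many roots, but that without $k \geq 2s-2$ one cannot even conclude that degeneracy forces $\delta_i = f_\alpha''(\alpha_i)/f_\alpha'(\alpha_i)$. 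Your list of candidate configurations ($x^2(x-c)^2$, $(x^2-1)(x-c)(x-c')$, $x^6$, $x^4(x-c)$, with a pointer to the sextic column of Table~\ref{table: degf=4/5/6}) is also not what $(n_1,r)=(4,4)$ produces; the valid $f/f_\alpha$, up to affine change, are exactly $x^4$, $(x^2-1)^2$, $x^2(x-1)^3$, $(x^2-1)^3$, with $\deg f \in \{8,9,10\}$, so Table~\ref{table: degf=4/5/6} is irrelevant.

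More seriously, the central step --- producing the degenerate witness and the matching $\alpha_i$ --- is left as ``I would handle this by direct computation \ldots exhibiting an explicit solution locus of positive dimension.'' That is precisely the content of the theorem; deferring it is a gap, not a proof, and the $4\times 5$ symbolic rank computation you propose is considerably harder than what the paper actually does. The paper's route is a dimensional recursion: use the surjective evaluation $\ev_4: Z(4,4)\to\CC$ from step~\ref{s: ev_ionto}, so that $\ker(\ev_4) \isom \wt{Z}(3,2) \hrarr \wt{Z}(3,3) \isom W(g)$ with $g = f/(x-\alpha_4)$; Lemma~\ref{L: nonvanD} then forces $\dim W(g) = 1$; and one shows the unique monic basis element of $W(g)$ can be arranged to have degree at most $2$, which amounts to solving an explicit polynomial equation such as $(x^2-1)g_{\alpha}''(x) + 6x\, g_{\alpha}'(x) = \lambda g_{\alpha}(x)$ for a cubic $g_\alpha$ with distinct roots. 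Your proposal has the right overall framework but is missing both this recursion and the concrete witness construction that makes the argument go through.
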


We outline the major steps in the construction of degenerate $W(f)$s:

\begin{enumerate}[label=(\roman*)]
	\item Modulo Proposition~\ref{P: ax+b},  there are only four types of polynomial $f$ such that $W(f) \isom Z(k, k), \forall  \; k \geq 3$.  \label{s: n_1=r4type}
	\item Recall in (\ref{equ: dwt}) $\wt{ d } (x) := d(x) - ( f_{\alpha} ''  / f_{\alpha} ' )( x ) \in \CC(x)$. As a rational function $\wt{ d }(x) = p(x) / q(x)$ where $p, q \in \CC[x]$ with $\deg p = \deg q - 1 = n_2 + N_3 -1$, and $q(x) = f_{\beta} ( x ) f_{\gamma } (x)$. Let $a$ be the leading coefficient of $p$. We claim $a \in \ZZ \cap [ 3n_2 + 4N_3, \infty)$.   \label{s: leadcoeff4dNum}
	\item For each pair $(n_1, r) \in \ZZ \times \ZZ$ satisfying $0 < r < 2n_1$, let $\ev_i: Z(n_1, r) \rarr \CC$ be the evaluation map $p(x) \mapsto p( \alpha_i )$. From Remark~\ref{rem: RAM} and Lemma~\ref{L: RAM}, $\ev_i$ is onto and has  kernel isomorphic to $Z(n_1, n_1 -2)$, which naturally sits in $Z(n_1, n_1-1) \isom W(f_i)$ where $f_i(x) = f(x) / (x- \alpha_i)$. \label{s: ev_ionto}
	\item If $n_1 = r = 3$, then $W(f)$ is non-degenerate. (i.e., $\dim_{\CC} Z(3,3) =1$) \label{s: Z(3,3)nondegen}
	\item If $f(x)$ is of one of the following form then the inclusion $Z(3, 2) \hrarr Z(3, 3) \isom W(f)$ is an isomorphism. \label{s: Z(3,2)isoZ(3,3)}
	\[
		(x^2-1)^2\cdot  \left(x^3 - \frac{1 }{ 3 } x \right),  (x^2-1)^3\cdot  \left(x^3 + \frac{ 3 }{ 11 } x \right), x^2(x-1)^3 \cdot \left( x^3 - \frac{ 15 }{ 11 } x^2 + \frac{ 6 }{ 11 } x - \frac{ 2 }{ 33 } \right).
	\]
\end{enumerate}

\subsection*{Proof of Theorem~\ref{T: counterEX} modulo \ref{s: n_1=r4type}$\sim$\ref{s: Z(3,2)isoZ(3,3)}.} Given $f / f_{\alpha} \ne x^4$ with $n_1= 4$, consider the evaluation homomorphism $\ev_4: Z(4, 4) \rarr \CC^1$. $\ev_4$ is onto by step \ref{s: ev_ionto}. Moreover, we have $\ker( \ev_4 ) \isom \wt{ Z } (3, 2 ) \hrarr \wt{ Z }( 3, 3 ) \isom W( f_4 )$ where $f_4(x) = f(x) / (x -\alpha_4)$. Hence $\dim Z(4, 4 ) = \dim \CC + \dim \ker(\ev_4) = 1 + \dim \wt{ Z } ( 3, 2 )$. On the other hand, $\dim W( f_4 ) = 1$ by step \ref{s: Z(3,3)nondegen}. So if $f_4$ is of the form in step \ref{s: Z(3,2)isoZ(3,3)}, we get $\dim \wt{ Z }( 3, 2 ) = \dim W(f_4) = 1 \Rarr \dim[ Z(4, 4) ] =2$, which shows $W(f) \isom Z(4, 4)$ is non-degenerate. 

\subsection{Proof of step \ref{s: n_1=r4type} and \ref{s: leadcoeff4dNum}}To begin with \ref{s: n_1=r4type}, let $k \geq 3$, we have for $n_1 = r = k$, 
\[
	r = n_1 \Rarr  n_1 + n_2 + N_3 -2 \leq (n-2) - (n_2 + 2N_3)   = r = n_1 \Rarr n_2 + N_3 \leq 2.
\]
If $W(f) \ne 0$, we must have $1 \leq n_2 + N_3$. Hence for $W(f) = Z(k , k)$, we have $n_2 + N_3 =1$ or 2. In the first case, $N_3 =1, n_2 = 0$ otherwise $f$ won't be divisible by a square of a quadratic polynomial. Hence $f(x) / f_{\alpha} ( x ) = x^m$ for some $m \geq 3$ integer. To compute $m$, notice that  
\[
	\deg f = m + n_1 \Rarr r = (m + n_1 -2) - (0 + 2 \cdot 1) = (m-4) +n_1 = n_1 \Rarr m = 4.
\]
Similarly, if $n_2 + N_3 =2$, then we deduce
\[
	f / f_{\alpha} =
	\begin{cases}
		(x^2- 1)^2  &\text{ when $n_2 =2$, $N_3 =0$}; \\
		x^2(x-1)^3	&\text{ when $n_2 = N_3 =1$}; \\
		(x^2-1)^3	&\text{ when $n_2 = 0$, $N_3 =2$}.
	\end{cases}
\]
In short, to study $Z(k , k ) \isom W(f)$, we only need to consider four special types of $f$. In other words, $\wt{ d } ( x ) = d(x) - (f_{\alpha} ''  / f_{\alpha}' )(x)$ is one of the following:
\[
	\wt{ d } (x) = \frac{ ax + b }{  x^2 + c x + d } \text{ with }
	\begin{pmatrix}
	a & b\\
	c & d
	\end{pmatrix}
	=
	\begin{pmatrix}
	6 & 0 \\
	0 & 0
	\end{pmatrix}
	,
	\begin{pmatrix}
	6 & 0 \\
	0 & -1
	\end{pmatrix}
	,
	\begin{pmatrix}
	7 & -3 \\
	-1 & 0
	\end{pmatrix}
	,
	\begin{pmatrix}
	8 & 0 \\
	0 & -1
	\end{pmatrix}.
	 \tag{i.1} \label{equ: dclassk=s}
\]
By Step \ref{s: n_1=r4type}, we can say the following.

\begin{proposition} Suppose that $n_1 = r$. Then $f$ has distinct multiple roots if and only if $f(x) \ne x^4  f_{\alpha} ( x)$ modulo certain affine change of coordinates $x \mapsto \lambda x + \mu$. In particular, Theorem~\ref{T: counterEX} is equivalent to Theorem~\ref{T: mainthm}.\ref{item: noneg}.
\end{proposition}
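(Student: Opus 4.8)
The plan is to deduce the equivalence directly from the classification obtained in step~\ref{s: n_1=r4type}, reformulating it in the language of multiple roots. First I would record the trivial but essential observation that, setting $g(x) := f(x)/f_{\alpha}(x) = f_{\beta}(x)^2\,\wt{f}_{\gamma}(x)$, the factor $f_{\alpha}(x)$ collects precisely the simple roots of $f$ and is therefore coprime to $g(x)$; hence the distinct multiple roots of $f$ are exactly the distinct roots of $g$, with the same multiplicities. In particular, ``$f$ has at least two distinct multiple roots'' is equivalent to ``$g$ has at least two distinct roots'', and the latter is unaffected by a substitution $x\mapsto \lambda x+\mu$ with $\lambda\ne 0$, since that only replaces $g(x)$ by $g(\lambda x+\mu)$ up to a nonzero scalar.

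Next, because $n_1=r$ means $W(f)\cong Z(r,r)$ (Corollary~\ref{C: CimportL}), step~\ref{s: n_1=r4type} applies, and its proof shows that, modulo an affine change $x\mapsto\lambda x+\mu$, the polynomial $f(x)$ equals $f_{\alpha}(x)$ times one of
\[
	x^4,\qquad (x^2-1)^2,\qquad x^2(x-1)^3,\qquad (x^2-1)^3;
\]
equivalently, $g(x)$ is affinely equivalent to one of these four. Their root-multiplicity multisets are $\{4\}$, $\{2,2\}$, $\{2,3\}$, $\{3,3\}$ respectively, and since an affine change of variable preserves that multiset the four are pairwise inequivalent. In particular $g$ has at least two distinct roots if and only if $g$ is not affinely equivalent to $x^4$. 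Combining this with the first paragraph yields the asserted equivalence: $f$ has at least two distinct multiple roots if and only if $f(x)\ne x^4 f_{\alpha}(x)$ modulo $x\mapsto\lambda x+\mu$.

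Finally, for the statement that Theorem~\ref{T: counterEX} is equivalent to Theorem~\ref{T: mainthm}.\ref{item: noneg}, I would simply match hypotheses and conclusions. By Corollary~\ref{C: CimportL} the relation $W(f)\cong Z(4,4)$ holds exactly when $n_1=r=4$, and for such $f$ we have $\mu=r+1-n_1=1$. Under this common hypothesis the equivalence proved above identifies the condition ``$f/f_{\alpha}\ne x^4$'' appearing in Theorem~\ref{T: counterEX} with the condition ``$f$ has at least two distinct multiple roots'' appearing in Theorem~\ref{T: mainthm}.\ref{item: noneg}, while both conclusions read $\dim[W(f)]=2>\mu=1$; hence the two assertions coincide. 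I do not expect any genuine obstacle here: all the substance sits in step~\ref{s: n_1=r4type}, which is already established, and the remaining ingredients — coprimality of $f_{\alpha}$ with $f/f_{\alpha}$, invariance of root multiplicities under affine maps, and the bookkeeping above — are routine; the one point meriting a word of care is that $n_1=r$ does place us in the setting of step~\ref{s: n_1=r4type}, which is immediate since $n_2=N_3=0$ would force $f=f_{\alpha}$ and $r=n_1-2$.
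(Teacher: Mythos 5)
Your proposal is correct and takes essentially the same route as the paper. The paper gives no explicit proof of this proposition---it simply asserts it ``By Step \ref{s: n_1=r4type}''---and what you have written out is precisely the unpacking of that step: the classification of $f/f_{\alpha}$ into the four affine-equivalence classes $x^4$, $(x^2-1)^2$, $x^2(x-1)^3$, $(x^2-1)^3$, read off via root-multiplicity multisets. Your side remark that $n_1=r$ forces $n_2+N_3\geq 1$ (since $n_2=N_3=0$ would give $r=n_1-2$) is a worthwhile sanity check that the paper leaves implicit. One small caveat, which is a wrinkle in the paper rather than in your argument: Theorem~\ref{T: counterEX} as literally stated carries the qualifier ``with $\alpha_i$s appropriately chosen,'' which is an existential claim over the simple roots, whereas Theorem~\ref{T: mainthm}.\ref{item: noneg} reads as a universal one; the ``in particular'' equivalence is therefore being asserted at the level of the intended content rather than the letter of the statements, and your bookkeeping inherits that imprecision from the source.
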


To show \ref{s: leadcoeff4dNum}, we know by common denominator 
\begin{align*}
	\wt{ d}_f (x) &= \sum_{i=1}^{n_2} \frac{ 3 }{ x - \beta_i } + \sum_{j =1}^{N_3} \frac{ 2(k_j-1) }{ x - \gamma_j } = \frac{ 1 }{ f_{\beta} ( x ) f_{\gamma} ( x ) } \left[ 3  \sum_{i=1}^{n_2} \prod_{l \ne i} (x - \beta_l) + 2 \sum_{j=1}^{N_3} (k_j -1) \prod_{l \ne j}  ( x - \gamma_l )  \right]
\end{align*}
Since each product $\prod_{l \ne i}^{n_2} (x - \beta_i) , \prod_{l \ne j}^{N_3} (x-  \gamma_l)$ is monic, we can calculate the leading coefficient of $p(x)$ as the following. 
\[
	\textstyle a = 3 \sum_{i=1}^{n_2} 1  + 2 \sum_{j=1}^{N_3} (k_j-1)  = 3n_2+ 2 \sum_{j=1}^{N_3} (k_j -1)  \in \ZZ
\] 
In particular, since each $k_j \geq 3$, we conclude $a \geq 3 n_2 + 2 \sum_{j=1}^{N_3} 2  = 3n_2 + 4N_3$. 

\subsection{Proof of Step \ref{s: Z(3,3)nondegen}} \label{subsec: proofZ(3,3)nondegen} Consider the following symmetric rational function in two variables:
\[
	D(T_1, T_2) := \frac{ \wt{ d } ( T_1 ) - \wt{ d } (T_2) }{ T_1 - T_2 } -  \wt{ d }( T_1 ) \wt{ d } (T_2).  \tag{\ref{subsec: proofZ(3,3)nondegen}.2}
\]

\begin{lemma} \label{L: nonvanD} \upshape Let $D(T_1, T_2)$ be the same as (\ref{subsec: proofZ(3,3)nondegen}.2). Then $D(\alpha_1 ,\alpha_2 ) \ne 0$ modulo certain permutations of $\alpha_i \in \alpha$.
\end{lemma}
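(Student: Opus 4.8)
The plan is to use that, under the standing hypothesis $n_1=r=3$ of this subsection, step~\ref{s: n_1=r4type} forces $\wt{d}(x)$ to be one of the four explicit rational functions listed in $(\ref{equ: dclassk=s})$, to put $D(T_1,T_2)$ in closed form over the common denominator, and then to run a one-line ``difference'' argument showing the three scalars $D(\alpha_i,\alpha_j)$ ($1\le i<j\le 3$) cannot all vanish; picking a pair where $D\ne0$ and relabelling the simple roots then gives the statement.

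First I would fix notation. By step~\ref{s: n_1=r4type}, $\wt{d}(x)=(ax+b)/Q(x)$ with $Q(x)=x^2+cx+d$ monic and $(a,b,c,d)$ one of the four admissible tuples of $(\ref{equ: dclassk=s})$; in all four cases $a\in\{6,7,8\}$ (more generally $a\ge 3$ by step~\ref{s: leadcoeff4dNum}), so $a\ne0$ and $a+1\ne0$. Moreover the roots of $Q$ are multiple roots of $f$, hence are distinct from each simple root $\alpha_i$, so $Q(\alpha_i)\ne0$ for $i=1,2,3$.

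Next I would compute $D$. Factoring $T_1-T_2$ out of $(aT_1+b)Q(T_2)-(aT_2+b)Q(T_1)$ and collecting terms in $(\ref{subsec: proofZ(3,3)nondegen}.2)$ gives
\[
	D(T_1,T_2)=\frac{N(T_1,T_2)}{Q(T_1)\,Q(T_2)},\qquad
	N(T_1,T_2):=-(1+a)\bigl(aT_1T_2+b(T_1+T_2)\bigr)+\bigl(ad-bc-b^2\bigr),
\]
which one can also verify directly by substituting the four tuples. Since $Q(\alpha_i)\ne0$, for $i\ne j$ the relation $D(\alpha_i,\alpha_j)=0$ is equivalent to $N(\alpha_i,\alpha_j)=0$, and the key point is that $N$ is affine in each argument, so
\[
	N(\alpha_1,\alpha_2)-N(\alpha_1,\alpha_3)=-(1+a)(\alpha_2-\alpha_3)(a\alpha_1+b),\qquad
	N(\alpha_1,\alpha_2)-N(\alpha_2,\alpha_3)=-(1+a)(\alpha_1-\alpha_3)(a\alpha_2+b).
\]

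Finally I would argue by contradiction: if $D(\alpha_i,\alpha_j)=0$ for all three pairs, then $N(\alpha_1,\alpha_2)=N(\alpha_1,\alpha_3)=N(\alpha_2,\alpha_3)=0$, and since $1+a\ne0$ and the $\alpha_i$ are pairwise distinct the two displayed identities give $a\alpha_1+b=0=a\alpha_2+b$; subtracting and using $a\ne0$ yields $\alpha_1=\alpha_2$, a contradiction. Hence some $D(\alpha_i,\alpha_j)\ne0$, and permuting the simple roots we may assume $D(\alpha_1,\alpha_2)\ne0$. I do not expect a genuine obstacle here: the only care needed is the algebra of the closed form for $N$ (a misplaced sign would break the difference identities) and the uniform handling of the case $f/f_{\alpha}=x^4$, where $\wt{d}=6/x$ has a linear denominator — the normalization $6x/x^2$ of $(\ref{equ: dclassk=s})$ puts it under the same formula, with $Q=x^2$ still nonvanishing at the $\alpha_i$.
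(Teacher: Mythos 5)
Your proof is correct, and it takes a genuinely more elementary route than the paper's. Both proofs start from the same closed form $N(T_1,T_2)=-(1+a)\bigl(aT_1T_2+b(T_1+T_2)\bigr)+(ad-bc-b^2)$ for the numerator of $D$ (the paper records the constant term as $ad-bc$, which is a small typo; your $ad-bc-b^2$ is correct, and in any case neither argument uses the constant term). From there the paths diverge. The paper views the three conditions $\wt{D}(\alpha_i,\alpha_j)=0$ as a linear system $A\vec{x}=\vec{0}$ in the coefficient vector $\vec{x}=(x_{00},x_{10},x_{11})$, computes a Vandermonde-type determinant $\det A=(\alpha_3-\alpha_1)(\alpha_3-\alpha_2)(\alpha_2-\alpha_1)\ne 0$, and concludes $\vec{x}=\vec{0}$, contradicting $x_{11}=-a(a+1)\ne 0$. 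You instead exploit that $N$ is affine in each argument and form two differences of the three vanishing conditions, arriving directly at $a\alpha_1+b=0=a\alpha_2+b$ and hence $\alpha_1=\alpha_2$, a contradiction. Your version avoids the determinant and is shorter; the paper's linear-algebra formulation has the advantage that it scales to the $6\times 6$ system in Remark~\ref{rem: Z(4,5)nondegn}, where $\wt{D}$ is quadratic rather than affine in each variable and a difference argument would be less immediate. Both approaches use exactly the same facts about $a$ (namely $a\notin\{0,-1\}$), and your treatment of the $f/f_\alpha=x^4$ case via the normalization $Q(x)=x^2$ is consistent with how the paper sets up $\wt{D}$.
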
  

\begin{proof} Let $\wt{ D } (T_1, T_2):= D(T_1, T_2) \cdot (T_1^2 + cT_1 + d)(T_2^2 + cT_2 + d)$. Note $\wt{ D } \in \CC[T_1, T_2]$ is the numerator of the rational function $D$. So $D(\alpha_1, \alpha_2)  =0 \iff \wt{ D } ( \alpha_1, \alpha_2 ) =0$. In addition, $\wt{ D }$ is a symmetric function. So $\wt{D }$ can be expressed in the multinomial basis $\{ T_1 + T_2, T_1T_2, 1 \}$,
\[
	\wt{ D }(T_1, T_2) = x_{11} T_1T_2 + x_{10} (T_1 + T_2) + x_{00}  \tag{\ref{L: nonvanD}.1}
\]
where $x_{11} = - a(a+1), x_{10} = - b(a +1), x_{00} = (ad-bc)$. Suppose to the contrary that $\wt{ D } ( \alpha_i, \alpha_j ) = 0$ for any $1 \leq i \ne j\leq 3$. Let $\vec{ x } = (x_{00}, x_{10}, x_{11} )$ then $\wt{ D } (\alpha_i, \alpha_j) =0$ can be viewed as a linear equation $A \vec{ x } = \vec{ 0 }$ where 
\begin{align*}
	A &=
	\begin{pmatrix}
		1 & \alpha_1 + \alpha_2 & \alpha_1 \alpha_2 \\
		1 & \alpha_1 + \alpha_3 & \alpha_1 \alpha_3 \\
		1 & \alpha_2 + \alpha_3 & \alpha_2 \alpha_3 
	\end{pmatrix}
	\Lrarr
	\det A
	=
	\begin{vmatrix}
		1 & \alpha_1 + \alpha_2 & \alpha_1 \alpha_2 \\
		0 & \alpha_3 - \alpha_2 & \alpha_1( \alpha_3 - \alpha_2 ) \\
		0 & \alpha_3 - \alpha_1 & \alpha_2( \alpha_3 - \alpha_1) 
	\end{vmatrix}  
	= (\alpha_3 - \alpha_1)(\alpha_3 - \alpha_2) ( \alpha_2 - \alpha_1)  \ne 0.
\end{align*}
Now we can multiply $A^{-1}$ to obtain $\vec{x} = A^{-1} (A x )  = \vec{ 0 }$. In particular $x_{11} = 0 \Rarr a = 0$ or $1$, but $a \in \{ 6, 7 ,8 \}$ by (\ref{equ: dclassk=s}). This is a contradiction.
\end{proof} 

As a consequence of this lemma, we can finish the proof that $\dim Z(3, 3) = 1$. Let $\ev_3: W(f) \isom Z(3, 3) \rarr \CC$ be the evaluation map $p(x) \longmapsto p( \alpha_3 )$. It suffices to check $\ker \ev_3 = 0$. By \ref{s: ev_ionto}, we need to show $\wt{ Z } ( 2, 1 )  = Z( \wt{ \delta }, \wt{ \alpha }, 2, 1)= 0$ in $W(g) = \wt{ Z } ( 2, 2 )$ where $\wt{ \delta } = ( d_g( \alpha_1), d_g( \alpha_2 ) ) , \wt{ \alpha } = ( \alpha_1, \alpha_2), \text{ and } g(x) = f(x) / (x- \alpha_3)$.
Since $g_{\alpha} ( x ) = (x- \alpha_1) (x - \alpha_2)$, we can write 
\[
	d_g( \alpha_1) = \wt{ d_g } (\alpha_1 ) + \frac{ 2 }{ \alpha_1 - \alpha_2 } , d_g( \alpha_2 ) = \wt{d_g}( \alpha_2) + \frac{ 2 }{ \alpha_2 - \alpha_1} \tag{\ref{sec: counterEX}.2}
\]
Moreover $g, f$ only differ by a simple root factor $(x- \alpha_3)$, so $\wt{d_g} ( x)$ coincides with one of the $\wt{ d} (x)$ in (\ref{sec: counterEX}.1). Let $\wt{ A }$ be the associated matrix of $\wt{ Z }( 2, 1)$, we prove $\det \wt{ A }  \ne 0$. Recall 
\[
	\wt{ A } = 
	\begin{pmatrix}
		d_g(\alpha_1) & \alpha_1 d_g(\alpha_1) - 1 \\
		d_g( \alpha_2) & \alpha_2 d_g( \alpha_2) -1
	\end{pmatrix}
	\Lrarr \det \wt{A } =
	(\alpha_2 - \alpha_1) d_g(\alpha_1)d_g(\alpha_2) - [ d_g(\alpha_1) - d_g(\alpha_2) ].
\] 
By (\ref{sec: counterEX}.2), we have
\begin{align*}
	d_g(\alpha_1) - d_g(\alpha_2) &= \wt{ d_g } ( \alpha_1 ) - \wt{ d_g } ( \alpha_2) + \frac{ 4}{ \alpha_1 - \alpha_2}= \wt{ d } ( \alpha_1 ) - \wt{ d } ( \alpha_2) + \frac{ 4}{ \alpha_1 - \alpha_2}, \\
	(\alpha_2-\alpha_1)d_g(\alpha_1)d_g(\alpha_2) &= \wt{ d } ( \alpha_1 ) \wt{ d }( \alpha_2) ( \alpha_2 - \alpha_1) + 2 [  \wt{d } ( \alpha_1) - \wt{ d }( \alpha_2) ] +  \frac{ 4 }{ \alpha_1 - \alpha_2 }.
\end{align*}
It follows that $\det \wt{ A } =  \wt{ d } ( \alpha_1 ) \wt{ d }( \alpha_2) ( \alpha_2 - \alpha_1) +  [ \wt{d } ( \alpha_1) - \wt{ d }( \alpha_2) ] =(\alpha_1 - \alpha_2) D(\alpha_1, \alpha_2)$. We know $\det \wt{ A } \ne 0$ from Lemma~\ref{L: nonvanD}. So, $\wt{ Z } ( 2, 1) = 0 \Rarr \dim Z(3, 3 ) \leq  \dim [ \Ima \ev_3 ] = 1$.

\subsection{Proof of Step \ref{s: Z(3,2)isoZ(3,3)}}  \label{subsec: Z(3,2)isoZ(3,3)}We turn into the case where $Z(4, 4) \isom W(f)$ with $f / f_{\alpha}   = (x^2-1)^2$. By \ref{s: ev_ionto}, we can assume $\ev_4: Z(4, 4) \rarr \AA^1$ is onto. Using part \ref{s: ev_ionto} we have $\ker (\ev_4) \isom \wt{ Z } ( 3, 2 ) \hrarr \wt{ Z } ( 3 , 3 ) \isom W( g) \text{ where } g = f / (x- \alpha_4)$. By part \ref{s: Z(3,3)nondegen}, $\dim W(g) = 1$. Without loss of generality, we can choose a basis $\{ \wt{ p } \}$ for $W(g)$ such that $\wt{ p }$ is monic. Observe if $\wt{ p } \equiv 1$, then the interpolation condition on $W(g)$ becomes $d_g( \alpha_i ) = 0$ for $i = 1, 2, 3$. Since $d_g(x) = [ g_{\alpha} ''( x ) /  g_{\alpha}' ( x ) ]+ [  6 x / ( x^2 -1) ]$, the condition $d_g( \alpha_i ) = 0$ is equivalent to the existence of a nonzero constant $\lambda \in \CC$ such that 
\[
	g_{\alpha}''( x) (x^2 -1) + 6x g_{\alpha}' (x) = \lambda g_{\alpha} ( x ). \tag{\ref{sec: counterEX}.3}
\]
Let $g_{\alpha} ( x ) := x^3 - e_1 x^2 + e_2 x - e_3$, where $e_i$ are elementary symmetric functions in $\alpha_1, \alpha_2, \alpha_3$. We check (\ref{sec: counterEX}.3) has a solution in $\alpha_1, \alpha_2, \alpha_3$. To begin with, $g_{\alpha} ' ( x ) = 3x^2 -2e_1 x + e_2 \text{ and } g_{\alpha}''( x ) = 6x - 2e_1$. Let $L(x)$ be the left-hand side of equation (\ref{sec: counterEX}.3). By direct computation $L(x) =24 x^3 - 14 e_1x^2 + 6(e_2-1) x + 2e_1$.
Comparing coefficients of $x^i$ between $L(x)$ and $\lambda g_{\alpha}( x)$ in (\ref{sec: counterEX}.3), we obtain 
\[
	\lambda = 24, -24 e_1 = -14 e_1, 24 e_2 = 6(e_2 -1),  -24 e_3 = 2 e_1.
\]
So the existence of (\ref{sec: counterEX}.3) is same as the existence of three-tuple $(e_1, e_2, e_3) = (0, -1/3, 0)$. But $(\alpha_1, \alpha_2, \alpha_3) = (0, 1/ \sqrt{ 3 }, -1 / \sqrt{ 3 } )$ solves the system $(e_1, e_2, e_3) = (0, -1/3, 0)$. Hence by taking $\alpha_i$s appropriately $\wt{ Z }( 3, 2)$ contains the basis $\wt{ p }$ for $W(g)$.  In conclusion, $\wt{ Z } (3, 2)$ is isomorphic to $W(g)$ because both are one-dimensional.

Similarly, if $f / f_{\alpha} = (x^2 -1)^3$, we need to prove the existence of cubic polynomial $g_{\alpha}$ such that 
\[
	(x^2 -1)  g_{\alpha}''( x ) + 8x g_{\alpha}' (x ) = \lambda g_{\alpha} ( x ). \tag{\ref{sec: counterEX}.4}
\]
By setting $g_{\alpha} ( x ) = x^3 -e_1 x^2 + e_2 x - e_3$ and comparing coefficients, the same argument yields $(e_1, e_2, e_3) = (0,  3/ 11, 0)$. Hence we get a solution $(\alpha_1, \alpha_2, \alpha_3 ) = (0, 3i / \sqrt{ 33 }, -3i / \sqrt{ 33 } )$. In the last case $f / f_{\alpha} = x^2(x-1)^3$, we solve $g_{\alpha} ( x ) = (x- \alpha_1) ( x-  \alpha_2 )( x- \alpha_3) = x^3 - e_1 x^2 + e_2 x - e_3$ for 
\[
	(x^2-x) g_{\alpha}''( x )  + (7x -3) g_{\alpha}' (x ) = \lambda g_{\alpha} ( x ). \tag{\ref{sec: counterEX}.5}
\]
In this case where $(e_1, e_2, e_3) =( 15/ 11,  6/ 11,   -2 /33)$, such a polynomial $g_{\alpha}$ (distinct roots) exists because $\disc( g_{ \alpha } ) = -5736/14641 \ne 0$.

\begin{rem} \label{rem: Z(4,5)nondegn} \upshape The proof of $\dim Z(3, 3) = 1$ can be generalized to show that $W(f) \isom Z(4,5)$ is also non-degenerate. In this case $1 \leq n_2 + N_3 \leq 3$. So $d(x) = p(x) / q(x)$ for some $p, q \in \CC[x]$ with $\deg p =  \deg q -1 = 2$. In the same manner as (\ref{L: nonvanD}.1) in Lemma~\ref{L: nonvanD}, the symmetric polynomial $\wt{D }$ is of the form:
\[
	\wt{ D } (T_1, T_2 ) = x_{22} (T_1T_2)^2 + x_{21} (T_1^2T_2 + T_1 T_2^2) + x_{20} (T_1^2+ T_2^2) + x_{11}T_1T_2 + x_{10} (T_1 + T_2 ) + x_{00}.
\]
Again if $\wt{ D } ( \alpha_i, \alpha_j )= 0$ for all $1 \leq i \ne j \leq 4$, set $\vec{ x } = (x_{00}, x_{10}, x_{11}, x_{20}, x_{21}, x_{22} )$ we obtain a linear system $A \vec{ x } = \vec{ 0 }$ where
\[
	A =
	\begin{pmatrix}
		1 & \alpha_1 + \alpha_2 & \alpha_1 \alpha_2 & \alpha_1^2 + \alpha_2^2 & \alpha_1^2 \alpha_2 + \alpha_1 \alpha_2^2 & \alpha_1^2 \alpha_2^2 \\
		1 & \alpha_1 + \alpha_3 & \alpha_1\alpha_3 & \alpha_1^2 + \alpha_3^2 & \alpha_1^2 \alpha_3 + \alpha_1 \alpha_3^2 & \alpha_1^2 \alpha_3^2 \\
		1 & \alpha_1 + \alpha_4 & \alpha_1 \alpha_4 & \alpha_1^2 + \alpha_4^2 & \alpha_1^2 \alpha_4 + \alpha_1 \alpha_4^2 & \alpha_1^2\alpha_4^2  \\
		1 & \alpha_2 + \alpha_3 & \alpha_2 \alpha_3 & \alpha_2^2 + \alpha_3^2 & \alpha_2^2 \alpha_3 + \alpha_2 \alpha_3^2 & \alpha_2^2\alpha_3^2 \\
		1 & \alpha_2 + \alpha_4 & \alpha_2 \alpha_4 & \alpha_2^2 + \alpha_4^2 & \alpha_2^2 \alpha_4 + \alpha_2 \alpha_4^2 & \alpha_2^2\alpha_4^2 \\
		1 & \alpha_3 + \alpha_4 & \alpha_3 \alpha_4 & \alpha_3^2 + \alpha_4^2 & \alpha_3^2 \alpha_4 + \alpha_3 \alpha_4^2 & \alpha_3^2\alpha_4^2 
	\end{pmatrix}
\]
Let $\tau \in S_4$ be a transposition. Observe for each $1 \leq i \ne j \leq 4$ the map $(\alpha_i , \alpha_j) \mapsto (\alpha_{ \tau ( i) } , \alpha_{ \tau (j) })$ either fixes or interchanges two pairs of distinct rows in $A$. So $\det A$ is a symmetric polynomial in $\ZZ[ \alpha_1, \alpha_2, \alpha_3, \alpha_4]$. On the other hand, $\deg ( \det A ) = 12$ is same as the degree of the discriminant $\prod_{1 \leq i \ne j \leq 4} ( \alpha_i - \alpha_j )^2$. Hence $\det A = \lambda \prod_{1 \leq i < j \leq 4} ( \alpha_i - \alpha_j)^2$ for some $\lambda \in \ZZ$. By evaluation at the point $(\alpha_1, \alpha_2, \alpha_3, \alpha_4) = (0, 1, -1, 2)$, we get $\lambda = -1$. As before we deduce $\vec{ x } = A^{-1} \vec{ 0 } = \vec{ 0 }$. In particular $x_{11} = - a ( a + 1) = 0 \Rarr a = 0$ or $-1$. However by \ref{s: leadcoeff4dNum} $a \in \ZZ_+$, which give rise to a contradiction. So the existence of a pair $(\alpha_1, \alpha_2)$ such that $\wt{D }( \alpha_1, \alpha_2) \ne 0$ is again established.

By the same manner as step \ref{s: Z(3,3)nondegen}, we can check $\dim Z(4, 5) = 2$. Let $\ev_{3,4}: W(f) \rarr \CC^2$ be the evaluation map $p(x) \longmapsto (p(\alpha_3), p(\alpha_4) )$. As in step \ref{s: ev_ionto}, we get $\ker ( \ev_{3, 4})  \isom Z ( \wt{ \delta } , \wt{ \alpha }, 2, 1 )  \hrarr \wt{ Z }( 2, 3 ) \isom W(g)$ where $g(x) = f(x) / [(x -\alpha_3) (x- \alpha_4) ]$, $\wt{ \delta } = ( d_g( \alpha_1 ) , d_g( \alpha_2 ) )$, and $\wt{ \alpha } = ( \alpha_1, \alpha_2)$. Let $\wt{ A }$ be the associated matrix of $\wt{ Z } ( 2, 1 ) = Z( \wt{ \delta } ,\wt{ \alpha }  , 2 , 1 )$, the exact same argument as step \ref{s: Z(3,3)nondegen} shows $\det \wt{A } = (\alpha_1 - \alpha_2) D( \alpha_1, \alpha_2) \ne 0$. Hence we conclude $\ev_{3, 4}$ is injective which implies $\dim W(f) \leq \dim \CC^2 = 2$.
\end{rem}

\appendix


\section{Supplement of Theorem~\ref{T: mainthm}.\ref{item: noneg}}  \label{sec: n_1=r=4}

Recall by Theorem~\ref{T: counterEX} (Theorem~\ref{T: mainthm}.\ref{item: noneg}) we only need to examine the case $f(x) =x^4 f_{\alpha} (x)$ in order to obtain a complete classification of $W(f)$ for $n_1 =4$. We use the notation from \S\ref{sec: counterEX} throughout this section. 

Again \S\ref{sec: counterEX}.\ref{s: ev_ionto} implies the map $\ev_4: Z(4, 4) \rarr \CC$ is onto. As in \S\ref{sec: counterEX}.\ref{s: ev_ionto}, $\ker( \ev_4 ) \isom Z( \wt{ \delta }, \wt{ \alpha }, 3, 2 ) \hrarr \wt{ Z } ( 3, 3 ) \isom W( g )$ where $\wt{ \delta } = ( d_g( \alpha_1), d_g( \alpha_2), d_g( \alpha_3) ), \wt{ \alpha } = (\alpha_1, \alpha_2, \alpha_3)$ and $g(x) =f(x) / (x- \alpha_4)$. As before, it suffices to show $\ker \ev_4 =0$. By step \S\ref{sec: counterEX}.\ref{s: Z(3,3)nondegen}, $\dim W(g) = 1$. Let $\{ \wt{ p } \} \subseteq W(g)$ be a basis, it is enough to show $\deg \wt{ p } = 3$. We may assume that $\wt{ p }$ is monic. We claim it is impossible for $\deg \wt{ p } < 3$. 

\underline{If $\deg \wt{ p } = 0$}, then $p \equiv 1 , p ' \equiv 0$. So the system $\wt{ p } ' ( \alpha_i ) = d_g( \alpha_i ) \wt{ p } ( \alpha_i ), 1 \leq i \leq 3$ is equivalent to $d( \alpha_i ) = 0$ for all $1 \leq i \leq 3$. This means $x g_{\alpha} '' ( x ) + 6 g_{\alpha}' ( x ) \text{ vanishes at } \alpha_1, \alpha_2, \alpha_3  \iff g_{\alpha} (x) \text{ divides }x g_{\alpha} '' ( x ) + 6 g_{\alpha}' ( x )$. Since $\deg g = 3$, $x g_{\alpha}'' ( x ) + 6 g_{\alpha}'( x)$ has degree at most 2, which cannot be divisible by $g_{\alpha}$.  

\underline{If $\deg \wt{ p } = 1$}, let $\wt{ p }(x) = x-r$. In this case, the interpolation condition $\wt { p } '( \alpha_i ) = d( \alpha_i ) \wt{ p } ( \alpha_i )$ becomes $1 = (\alpha_i -r) \wt{ d }( \alpha_i )$. It follows that  $r = \alpha_i - [1 / d( \alpha_i ) ]$ for $i =1 ,2 ,3$. In other words, for all $1 \leq i  \ne  j \leq 3$
\[
	\alpha_i - \frac{ 1 }{ d( \alpha_i ) } = \alpha_j - \frac{ 1 }{ d( \alpha_j ) } \iff (\alpha_i - \alpha_j) d( \alpha_i )d ( \alpha_j ) + [d ( \alpha_i ) - d( \alpha_j ) ] = 0.
\]
This implies $\wt{ D }( \alpha_i, \alpha_j ) = 0$ for all $i \ne j$ in $W(g)$, which is impossible by Lemma~\ref{L: nonvanD} in step \ref{s: Z(3,3)nondegen} of \S\ref{sec: counterEX}.

\underline{If $\deg \wt{ p } =2$}, then $\wt{ p } ( x ) = x^2 + a_1 x + a_0$ for nonzero constants $(a_1, a_0) \in \CC^2$. Our strategy is to rewrite the system $\wt{ p } '( \alpha_i ) = d( \alpha_i ) \wt{ p } ( \alpha_i )$ into a polynomial equation and comparing coefficients. The vanishing condition of $\wt{ p } '( \alpha_i ) - d_g( \alpha_i ) \wt{ p } ( \alpha_i ) = 0$ is equivalent to say the polynomial $R( g, \wt{ p } ) ( x ) :=    [ x g_{\alpha}''(x) + 6 g_{\alpha}' (x ) ] \wt{ p } ( x )  - x g_{\alpha}'' ( x ) \wt{p } '( x )$ is divisible by $g_{\alpha} ( x ) = (x- \alpha_1)(x- \alpha_2) (x - \alpha_3)$. As in \S\ref{subsec: Z(3,2)isoZ(3,3)}, we write 
\[
	g_{\alpha} (x) = x^3 -e_1 x^2 + e_2 x -e_3 ,  g_{\alpha}'(x) = 3x^2 - 2e_1 x + e_2, \text{ and } g_{\alpha} '' ( x ) = 6x - 2e_1.
\]
Combining this with $\wt{ p } (  x ) = x^2 + a_1 x + a_0, \wt{ p } '( x ) = 2x + a_1$, we expand $R( g, \wt{ p } ) ( x )$ in the monomial basis $\{ 1, x, \dots, x^4\}$:
\[
	R(g, \wt{ p} ) ( x ) = 18 x^4 + (21a_1 - 10e_1) x^3 + (24 a_0 + 4e_2 - 12 a_1 e_1) x^2 +  (5 e_2 a_1 - 14 e_1 a_0) x^1 + 6 e_2a_0.
\]
On the other hand, since $R( g , \wt{ p } )$ is a quartic polynomial divisible by $g_{\alpha} ( x )$, we can write
\[
	R( g , \wt{ p } ) ( x ) = 18 (x^3 - e_1 x^2 + e_2 x -e_3) ( x- r ) = 18 [ x^4  - (r +e_1) x^3 + (re_1 + e_2) x^2 - (r e_2 + e_3) x + re_3 ],
\]
where $r \in \CC$ is the remaining root except $\alpha_i$s. Comparing coefficients of $x^i$ in above expressions of $R(g, \wt{ p} )$, the existence of $\deg \wt{ p } = 2$ reduces to the existence of pairs $(a_0, a_1, r)$ with $a_0 a_1 \ne 0$ such that the following equation holds:
\begin{align*}
	18 r e_3 &= 6 e_2 a_0,   -18( re_2 + e_3 ) = 5 e_2 a_1 - 14 e_1 a_0,   \\
	18 (re_1 + e_2 ) &=  24 a_0 + 4 e_2 - 12 a_1e_1 ,  -18(r + e_1 ) = 21 a_1 - 10 e_1.
\end{align*}
Solving $r$ for each equation, we obtain
\[
	r = \frac{ a_0 e_2 }{ 3 e_3 } =  - \frac{ 7 }{ 6 } a_1 - \frac{ 4 }{ 9 } e_1 = - \frac{ 2 }{ 3 } a_1 + \frac{ 4 a_0 }{ 3 e_1 } - \frac{ 7 e_2 }{ 9 e_1 } = - \frac{ 5 }{ 18 }{ a_1 } + \frac{ 7 e_1 }{ 9 e_2 } a_0 - \frac{ e_3 }{ e_2 }.
\]
We can see from these equations that coefficients of $a_1$ are in $\QQ$. We would take $r  =a_0 e_2 / (3 e_3)$ and solve other three equations in terms of $a_1$ to simplify this system.
\[
	a_1 = - \frac{ 2 e_2 }{ 7 e_3 } a_0 - \frac{ 8 e_1 }{ 21 }  = - \frac{ 8  }{ 3 e_1 } a_0 + \frac{ 14 e_2 }{ 9 e_1 } - \frac{ 8 e_1 }{ 9 }  = - \frac{ 7 e_1 }{ 8 e_2 } a_0 + \frac{ 9 e_3 }{ 8 e_2 }  - \frac{ e_1}{ 2 }.
\]
Now we can set $a_1$ to one of three quantities on the right and solve the other 3 for $a_0$:
\[
	\frac{ 56 e_3 - 6 e_1 e_2 }{ 21 e_1 e_3 } a _0 = \frac{ 14 e_2 }{ 9 e_1 } - \frac{ 32 e_1 }{ 63 } \text{ and } \frac{ 49 e_1 e_3 - 16 e_2^2 }{ 56 e_2 e_3 } a_0 = \frac{ 9 e_3 }{ 8 e_2 } - \frac{ 5 e_1 }{ 42 }.  \tag{$\ast$}
\]
Hence the existence of $\deg \wt{ p } = 2$ can be guaranteed by a solution of $(a_0, \alpha_1, \alpha_2, \alpha_3)$ with $\alpha_i \ne \alpha_j$. Suppose such a 4-tuple $(a_0, \alpha_1, \alpha_2, \alpha_3)$ exists. If $a_0 = 0$ then the above system is equivalent to 
\[
	 e_2 = \frac{ 16 }{ 49 } e_1^2 \text{ and } e_3 = \frac{ 20 }{ 189 } e_1e_2 = \frac{ 320 }{ 9261 } e_1^3.
\]
Under this relation the polynomial $g_{\alpha}$ becomes:
\[
	g_{\alpha} ( x )  = x^3 - e_1 x^2 + \frac{ 16 }{ 49 } e_1^2 x  -  \frac{ 320 }{ 9261 } e_1^3.
\]
Since $\disc( g_{\alpha} )= 0$, $g_{\alpha} = (x- \alpha_1)(x- \alpha_2) (x- \alpha_3)$ has multiple roots, a contradiction. Because $a_0 \ne 0$, we can divide the two equations in $(\ast)$ to cancel $a_0$: 
\[
	 \frac{ 56 e_3 - 6 e_1 e_2 }{ 21 e_1 e_3 }  \cdot \left(  \frac{ 9 e_3 }{ 8 e_2 } - \frac{ 5 e_1 }{ 42 }  \right)  =   \frac{ 49 e_1 e_3 - 16 e_2^2 }{ 56 e_2 e_3 }  \cdot  \left(  \frac{ 14 e_2 }{ 9 e_1 } - \frac{ 32 e_1 }{ 63 }   \right)
\]
Under the condition that $(e_1, e_2, e_3) \ne (0, 0, 0)$ the equation above is equivalent to the vanishing of the polynomial: $h  := 27 e_3^2-  18 e_1e_2 e_3+ 4( e_2^3 + e_1^3 e_3) - e_1^2 e_2^2$.
But we can also view $h$ as a polynomial in $\alpha_1, \alpha_2, \alpha_3$. In fact explicit computation shows $h = h( \alpha_1, \alpha_2 , \alpha_3 )  =196 (\alpha_1- \alpha_2)^2(\alpha_1 - \alpha_3)^2 (\alpha_2 - \alpha_3)^2 = [14  \disc( g_{\alpha } ) ]^2  \ne 0$. From both cases, $(\ast)$ has no solution for $(a_0, \alpha_1, \alpha_2, \alpha_3)$ with $\alpha_i \ne \alpha_j$. Therefore we conclude when $f(x) = x^4 f_{\alpha} (x  )$,  $\dim \wt{ Z} ( 3, 2 ) = 0 \Rarr \dim Z( 4, 4 ) = 1$.

\begin{rem} \label{rem: allW4n_1=4} \upshape By Theorem~\ref{T: mainthm}, Remark~\ref{rem: Z(4,5)nondegn} and Appendix~\ref{sec: n_1=r=4}, we completely classify $W(f)$ when $n_1 = 4$. Indeed, let $n_1 = 4$. Theorem~\ref{T: mainthm}.\ref{item: equdim} says $\dim [ W(f) ] = r -3$ for all $r \geq 6$. Remark~\ref{rem: Z(4,5)nondegn} says $\dim [ W(f) ] = 2$ when $r = 5$. Appendix~\ref{sec: n_1=r=4} and Theorem~\ref{T: counterEX} together show that $r = 4$ implies $\dim [W(f) ] = 2$ or 1 depending on whether $f$ has distinct multiple roots. 
\end{rem}

\section*{Acknowledgement} This note has arisen from an attempt to answer questions suggested by Yuri Zarhin in connection with \cite{Zar12}. I would like to thank him for his questions, stimulating discussions, and interest in this paper. I am also grateful to his patience on reading several preliminary versions of this note and making extremely useful remarks. In addition, I would like to thank Xiyuan Wang, whose comments helped to improve the exposition.

\bibliography{FinalDraft}
\bibliographystyle{abbrv}

\end{document}